\newtheorem{theoreme}{Theorem}
\newtheorem{definition}{Definition}
\newtheorem{proposition}{Proposition}
\newtheorem{corollaire}{Corollary}
\newtheorem{remarque}{Remark}
\newcommand{\T}{\ensuremath{{\mathcal{T}}}}
\newcommand{\B}{\ensuremath{{\mathcal{B}}}}
\newcommand{\Mdi}{\ensuremath{\mathcal{M}_\infty(\Rd)}}
\newcommand{\R}{\ensuremath{{\mathbb{R}}}}
\newcommand{\Rd}{\ensuremath{{\mathbb{R}^2}}}
\newcommand{\Md}{\ensuremath{\mathcal{M}(\Rd)}}
\newcommand{\C}{\ensuremath{\mathcal{C}}}
\newcommand{\del}{\ensuremath{\mathcal{D}el}}
\newcommand{\vor}{\ensuremath{\mathcal{V}or}}
\newcommand{\per}{\ensuremath{\mathcal{P}er}}
\newcommand{\vol}{\ensuremath{\mathcal{V}ol}}
\newcommand{\NN}{\ensuremath{{\mathbb{N}}}}
\newcommand{\vp}{\ensuremath{\varepsilon}}
\newcommand{\rem}{\ensuremath{\mathcal{R}}}
\newcommand{\g}{\ensuremath{\gamma}}
\newcommand{\1}{\ensuremath{\mbox{\rm 1\kern-0.23em I}}}
\renewcommand{\L}{\ensuremath{\Lambda}}
\renewcommand{\l}{\ensuremath{\lambda}}
\newcommand{\Bd}{\ensuremath{{\B(\Rd)}}}
\begin{document}

\begin{frontmatter}


\title{Practical simulation and estimation for Gibbs Delaunay-Voronoi tessellations with geometric hardcore interaction.}

\author{D. Dereudre\fnref{fn1}} 
\ead{david.dereudre@univ-valenciennes.fr}

\author{F. Lavancier\fnref{fn2}} 
\ead{frederic.lavancier@univ-nantes.fr}

\fntext[fn1]{ Universit\'e Lille Nord de France,
 F\'ed\'eration CNRS 2956,
 UVHC, LAMAV, F-59313 Valenciennes Cedex 09, France.
}

\fntext[fn2]{Universit\'e de Nantes,
 Laboratoire de Math\'ematiques Jean Leray,
 Unit\'e Mixte de Recherche CNRS 6629,
 UFR Sciences et Techniques,
 2 rue de la Houssini\`ere - BP 92208 -
 F-44322 Nantes Cedex, France.}

\begin{abstract} 

General models of Gibbs Delaunay-Voronoi tessellations, which can be viewed as extensions of Ord's process, are considered. The interaction may occur on each cell of the tessellation and between neighbour cells. The tessellation may also be subjected to a geometric hardcore interaction, forcing the cells not to be too large, too small, or too flat. This setting, natural for applications, introduces some theoretical difficulties since the interaction is not necessarily hereditary. Mathematical results available for studying these models are reviewed and further outcomes are provided. They concern the existence, the simulation and the estimation of such tessellations. Based on these results, tools to handle these objects in practice are presented: how to simulate them, estimate their parameters and validate the fitted model. Some examples of simulated tessellations are studied in details.
\end{abstract}

\begin{keyword}
Gibbs point process, random tessellations, stochastic geometry, pseudo-likelihood estimator, spatial statistics.
\end{keyword}

\end{frontmatter}

\section{Introduction}

In the domains of physics and biology, some large-scale random geometric structures can be mathematically modeled using Poisson-Voronoi or Poisson-Delaunay tessellations. In cosmology for instance, since \cite{MS}, modeling the large-scale galaxy distribution generally relies on Voronoi tessellations (see \cite{IW1} and \cite{IW2}). In biology, Voronoi tessellations are often used to model the cellular configuration of a tissue (since the seminal work of \cite{H}). This tool is also relevant to model the geometrical structure of proteins (cf. \cite{P} for a state of the art) or microstructures like foams. Mathematical properties of Poisson-Voronoi and Poisson-Delaunay tessellations have been widely studied (see  \cite{M} for instance).

Unfortunately, these models have the disadvantage of yielding strong independence properties due to the Poissonian nature of the underlying point process. In different biological studies, the necessity to introduce an interaction between the cells of the tessellation  to achieve greater realism has indeed been emphasized. In \cite{FRAEJ} for instance, the interaction between neighbouring epithelial cells is dealt with using a Hamiltonian energy. This Hamiltonian is a function of the area of each cell of the Voronoi tessellation, but it involves also a pair-interaction that depends on the length of the common edge of two cells. The same kind of interaction  (but between two types of cells) is also considered in \cite{EF}. Moreover, some geometric hardcore interactions are sometimes demanded. As an example, Lautensack and Sych (\cite{LS}) modeled foams by a tessellation built from a Matern model with hardcore interaction. The resulting tessellation is then constrained to reach a desired regularity. The study of the regularity of the tessellation is also at the heart of the article of Eglen and Willshaw (\cite{EW}): their work shows the relevance of forcing the geometry of cells in order to model retinal neurons. 

It is thus natural to consider Gibbsian modifications of the Poisson-Voronoi or Poisson-Delaunay tessellation, involving a smooth interaction but also a hardcore interaction (in a general sense, see Definition \ref{hardcore}), in order to produce more realistic models of interacting random structures.

A first mathematical model has been proposed by Ord (see the discussion in  \cite{Ripley}).  In this model, the interaction relies on each cell of the Voronoi tessellation. In particular, a classical hardcore interaction forces the cells not to be too small. This model can be viewed as a nearest neighbour Gibbs point process and was studied in  \cite{BM1}. Its existence on the infinite support $\R^d$ is implied by the results in \cite{BBD} and \cite{BBD1}. A Birth-Death simulation algorithm for simulating such nearest neighbour Gibbs processes is presented in \cite{BBD2}. However, tessellations involving geometric hardcore interactions do not generally belong to a classical theoretical framework as the previous one. They are in general not hereditary in the sense that,  when removing a point from an allowed tessellation, the resulting tessellation may become forbidden (see Section \ref{estimation}, or \cite{DL}, about this property).  Consider for instance a generalization of the Ord process where the cells are forced not to be too large: this natural model is not hereditary.  The existence of a Gibbs Delaunay-Voronoi tessellation on $\R^d$ associated to a large class of possible non-hereditary interactions has been proved   recently in \cite{D} and \cite{DDG}. For these processes, no simulation algorithm has been presented so far.

From a statistical point of view, the issue is the estimation of the interaction.  Assuming a parametric form, this may be achieved through the maximum likelihood or the pseudo-likelihood procedure. The maximum likelihood estimator suffers from a lack of theoretical justifications, except for restrictive examples of interacting point processes (see \cite{MW} for a review), which do not concern tessellation models. On the other hand, some theoretical results are available for the pseudo-likelihood estimator.  Consistency and asymptotic normality are proved in \cite{BCD} in a general framework including some Gibbs tessellation models, but without any hardcore interaction. A generalization to interactions involving a possible non-hereditary hardcore part is considered in \cite{DL}. In this article, the consistency of the estimation of both the hardcore part and the smooth part of the interaction is proved, in a setting concerning  a large class of tessellation models. 

In the present article, we  rewrite these theoretical results, sometimes established in an abstract setting, to the framework of Gibbs Delaunay-Voronoi tessellations. Moreover, some theoretical complements are given. In particular, a Birth-Death-Move algorithm is presented to simulate Gibbs tessellations with non-hereditary interactions, and a convergence result is proved. We also extend the recent concept of residuals introduced in \cite{BTMH} to the non-hereditary setting.  Nevertheless, the aim of this article is mainly to clarify how to handle (non-hereditary) Gibbs Delaunay-Voronoi  tessellations in practice: what kinds of models are available? How should we simulate these tessellations? How should we fit them to a data set and validate the fitted model? 

In the first part, the formal definition of Gibbs Delaunay-Voronoi tessellations is given. We restrict ourselves to tessellations on the plane $\R^2$ for simplicity. Three example models are then considered: a non stationary crystallized triangulation model, a stationary interacting Delaunay model and a Voronoi tessellation model. We think that these models could be relevant for the biological applications cited before. Moreover, they are used throughout the article to illustrate the proposed methods. In Section \ref{simulations}, we explain how to simulate (non-hereditary) Gibbs Delaunay-Voronoi tessellations thanks to a Birth-Death-Move Metropolis-Hastings algorithm. Some simulations of the three above examples are presented. In Section \ref{estimation}, we consider the estimation issue. As explained there, the pseudo-likelihood approach is preferred to the maximum likelihood procedure for practical reasons. As a matter of fact,  the maximum likelihood estimator is prohibitively time-consuming in our setting. However, if possible, maximum likelihood could be used in a second step to refine the pseudo-likelihood estimation. A procedure is presented to estimate both the hardcore parameters and the interaction, as considered in \cite{DL}. Finally, the concept of residuals as recently introduced in \cite{BTMH} is generalized, which gives a method to validate the fitted model.
In the appendix, we present some theoretical justifications. They concern the existence of Delaunay-Voronoi tessellations, the convergence of the simulation algorithm, and the consistency of the estimation procedure.

\section{The Gibbs Delaunay-Voronoi tessellations model.}
\subsection{The Poisson Delaunay-Voronoi tessellations.}

In paragraph \ref{point}, we recall the basic definition of point configurations.  In \ref{vordel}, some regularity assumptions are given to ensure that the Delaunay-Voronoi tessellations are well-defined. Randomness is introduced in paragraph \ref{poissonvordel}, via Poisson point processes, to define the well-known Poisson Delaunay-Voronoi tessellations which are models of random tessellations without interaction between the cells. 
The interaction is introduced in Section \ref{theorie}.
  
\subsubsection{Point configurations.}\label{point}

Let us denote by $\Rd$ the $2$-dimensional Euclidean real space. $\B(\Rd)$ is the set of bounded Borel sets in $\Rd$. The state space $\Md$ is the set of regular locally finite point configurations $\g$  in $\Rd$ defined by 
\begin{equation}\label{statespace}
 \Md = \left\{\g \subset \Rd \text{ such that } 
\begin{array}{l}
\text{-a) for all } \L \text{ in } \B(\Rd), \text{ Card}(\g\cap\L)<+\infty \\
\text{-b) four points of } \g \text{ are not on a same circle} \\
\text{-c) for every half plane }H \text{ in } \Rd, \text{ Card } (\g \cap H)>0

\end{array}
\right\},
\end{equation}
where $\text{Card} (\g\cap \Delta)$ denotes the number of points from $\g$ in the set $\Delta$. Let $\g$ be in $\Md$ and $\L$ a Borel set in $\Rd$, we denote by $\g_\L$ the restriction of $\g$ on $\L$ which is just the set $\g\cap\L$. For  a point $x$ in $\Rd$, we denote by $\g+x$ the configuration $\g \cup \{x\}$ and if $x$ belongs to $\g$, $\g-x$ denotes the set $\g \backslash\{x\}$.

\subsubsection{Delaunay-Voronoi tessellations.}\label{vordel}

Let us recall the definition of Delaunay-Voronoi tessellations, which are given for example in \cite{M} page 15. For a point configuration $\g$ in $\Md$, a set of three points $T=\{x,y,z\}$  belonging to $\g$ is a Delaunay triangle in $\g$ if the open circumscribed ball $\B(T)$ of $T$ does not contain any point of $\g$. The Delaunay tessellation $\del(\g)$ is defined by the set of all Delaunay triangles $T$ in $\g$. By points b) and c) in (\ref{statespace}), $\del(\g)$ is a partition of the space $\Rd$.

Concerning the Voronoi tessellation coming from $\g$, for every $x$ in $\g$, we define the Voronoi cell $C(x,\g)$ by
$$ C(x,\g)=\Big\{ z\in\Rd \text{ such that }\quad \forall y\in\g\backslash\{x\} \quad |z-x|\le |z-y| \Big\}.$$
From points a) and c) in (\ref{statespace}), we remark that $C(x,\g)$ is a bounded closed convex set in $\Rd$.  The Voronoi tessellation $\vor(\g)$ is defined by the set of all $ C(x,\g)$ for $x$ in $\g$. $\vor(\g)$ is also a partition of the plane $\Rd$. 

There are some relations between these two tessellations. Indeed, $T=\{x,y,z\}$ in $\g$ is a Delaunay triangle if and only if $ C(x,\g) \cap C(y,\g) \cap C(z,\g) \neq \emptyset$. 

\subsubsection{Poisson Delaunay-Voronoi tessellations.}\label{poissonvordel}

In this paragraph we define the Poisson Delaunay-Voronoi tessellations as in \cite{M} or \cite{SKM}. Let us recall that the space of point configurations $\Md$ is endowed with the $\sigma$-algebra $\sigma(\Md)$ generated by the sets $\{\g \in \Md, N_{\Lambda}(\g)=n \}$, $n\in \NN$, $\Lambda\in \B(\Rd)$, where $N_{\Lambda}(\g)$ denotes the number of points of $\g$ in $\Lambda$. The most prominent probability measures on $\Md$ are the Poisson processes. Let us denote them by $\pi^\nu$, where $\nu$ is a locally finite measure on $\Rd$ and stands for the intensity measure (see \cite{M} page 83). When $\nu$ is equal to $z\l$ ($z>0$, $\l$ the Lebesgue measure) we simply write $\pi^z$ which represents the classical stationary Poisson Point Process with intensity $z$. Let us remark that $\pi^\nu$ is not necessary stationary but obviously $\pi^z$ is. 

For every $\L$ in $\B(\Rd)$, $\pi^\nu_\L$ (respectively $\pi^z_\L$) denotes the Poisson process $\pi^\nu$ (respectively $\pi^z$) restricted on $\L$.

The law of $\del(\g)$ (respectively $\vor(\g)$) under the process $\pi^\nu$ is called the Poisson Delaunay (respectively Voronoi) tessellation with intensity $\nu$. These Poisson Delaunay-Voronoi tessellations are well-studied in \cite{M}, Section 4.

\subsection{Random Delaunay-Voronoi tessellations with interaction.}\label{theorie}

This section is devoted to the presentation of interacting random Delaunay-Voronoi tessellations. The interaction is introduced, via Gibbs modifications of the Poisson Delaunay-Voronoi tessellations, by specifying the conditional densities. This is the classical strategy used in physics and biology (see for example \cite{R}). 

For every $\L$ in $\B(\Rd)$, we consider the conditional density $f_\L$ with respect to the Poisson process $\pi^\nu_\L$ defined by    
\begin{equation}\label{density}
f_\L(\g_\L,\g_{\L^c}) = \frac{1}{Z_\L(\g_{\L^c})} e^{-E_\L(\g_\L,\g_{\L^c})},
\end{equation}
where $\g_\L$ is a point configuration inside $\L$, $\g_{\L^c}$ is a point configuration outside $\L$ and $E_\L(\g_\L,\g_{\L^c})$ is the energy of $\g_\L$ given the outside configuration $\g_{\L^c}$. $E_\L$ is a functional from $\vor(\g)$ or $\del(\g)$ to $\R\cup\{+\infty\}$ which we will precise later. $Z_\L(\g_{\L^c}):= \int e^{-E_\L(\g_\L,\g_{\L^c})} \pi^\nu_\L(d\g_\L)$ is the normalization constant in order to have a probability density under $\pi^\nu_\L$.

Let us remark that the conditional densities favor the point configurations with low energy and conversely penalize the point configurations with high energy. If the energy $E_\L(\g_\L,\g_{\L^c})$ is equal to infinity then, with probability one, the configuration is even forbidden and does not appear.
One classical example of such a situation is when the points of $\g$ are prohibited from being closer than a distance $R$ apart,  id est  $E_\L(\g_\L,\g_{\L^c})=+\infty$ if there exist $x$ in $\g_{\L}$ and $y$ in $\g$ such that $|x-y|\le R$. This constraint is usually designated as a hardcore interaction. In this paper, we generally call hardcore interaction any situation where $E_\L(\g_\L,\g_{\L^c})=+\infty$.
 \begin{definition}\label{hardcore}
An energy (or an interaction) is said to contain a hardcore part if  $E_\L(\g_\L,\g_{\L^c})=+\infty$ for some $\g$ and some $\L$.
\end{definition}

We denote by $\Mdi$ the set of allowed configurations which is defined by
\begin{equation} \label{Mdi}
\Mdi= \Big\{ \g\in\Md \text{ such that for all } \L \text{ in } \B(\Rd), \quad E_\L(\g_\L,\g_{\L^c})<+\infty \Big\}.
\end{equation} 
Now let us define the model of random Delaunay-Voronoi tessellations with interaction.
\begin{definition}\label{defgibbs}
A probability measure $P$ on $\Mdi$ is a Gibbs Delaunay-Voronoi tessellation for the energies $(E_\L)_{\L\in\B(\Rd)}$ and the intensity measure $\nu$ if for every $\L$ in $\B(\Rd)$ and for $P$-almost every outside configuration $\g_{\L^c}$, the law of $P$ given $\g_{\L^c}$ is absolutely continuous with respect to $\pi^\nu$ with the density $ f_\L(.,\g_{\L^c})$.
\end{definition}
This definition of Gibbs measures is the classical one that can be found for example in \cite{Pr}.\\
Let us point out several problems about the existence of these Gibbs Delaunay-Voronoi tessellations. First of all there are some conditions on the energies $(E_\L)_{\L\in\B(\Rd)}$ to ensure that the conditional densities $(f_\L)_{\L\in\B(\Rd)}$ are well-defined and compatible. Moreover, even if it is the case, it is not obvious that Gibbs Delaunay-Voronoi tessellations exist and are unique (the non unicity of Gibbs processes is called phase transition in statistical mechanics). In Section 5.1, we give some conditions to ensure the existence of stationary Gibbs Delaunay-Voronoi tessellations for energy functions given by (\ref{del}) and (\ref{vor}) below. As far as we know, uniqueness or phase transition for such Gibbs measures have not been proved. For similar Gibbs models with multi-type particles, a phase transition result is given in \cite{BBD3}.  

Since we are interested by models of random Delaunay-Voronoi tessellations, the energy functions have to depend on the local geometry of the Delaunay or Voronoi tessellations. We need some notations. Let us first  define the neighbour
relations $\sim_{\del}$, $\sim_{\vor}$ between the cells in $\del(\g)$ or $\vor(\g)$. 
\begin{align*}
&\text{ For all }\quad T, T' \in \del(\g), \qquad  T\sim_{\del} T' \quad\text { if }\quad  \text{ Card} (T \cap T') \ge 2.\\
&\text{ For all }\quad C, C' \in \vor(\g), \qquad C\sim_{\vor} C' \quad\text { if }\quad C\cap C'\neq \emptyset.
\end{align*}
In fact, $T\sim_{\del} T'$ if $T$ and $T'$ have a common edge and $C\sim_{\vor} C'$ if $C$ and $C'$ have a common edge at their boundary.

Now let us define the cells in $\del(\g)$ or $\vor(\g)$ which are inside or outside a given bounded set $\L$ in $\B(\Rd)$. A triangle $T\in \del(\g)$ (respectively a cell $C\in \vor(\g)$) is outside $\L$ if for every configuration $\g'_\L$ in $\L$,  $T$ (respectively $C$) is in $\del(\g_{\L^c} \cup \g'_\L)$ (respectively $\vor(\g_{\L^c} \cup \g'_\L)$). In other words, $T$ (or $C$) is outside  $\L$ if $T$ (or $C$) remains in $\del(\g)$ (or $\vor(\g)$) for any modification of the configuration $\g$ inside $\L$. $T$ (or $C$) is inside $\L$ if it is not outside
$\L$. We denote by $\del_\L(\g)$ (respectively $\vor_\L(\g)$) the cells $T$ in $\del(\g)$ (respectively  $C$ in $\vor(\g)$)
which are inside $\L$.\\

{\it a) A general form for the energy of a Delaunay tessellation}\\
We define the energy $E_\L$ of the Delaunay tessellation by
\begin{equation}\label{del}
E_\L(\g_\L,\g_{\L^c}) = \sum_{T\in \del_\L(\g)} V_1(T) + \sum_{
\begin{subarray}{l}
\{T,T'\} \subset  \del(\g) \\
T\sim_{\del}T' \\
T \text { or } T' \text{ in } \del_\L(\g)
\end{subarray}
}  V_2(T,T'),
\end{equation}
where $V_1$ is a function from the space of triangles $\T$ to $\R\cup\{+\infty\}$ and $V_2$ is a symmetric function from $\T^2$ to $\R\cup\{+\infty\}$. In Section 2.3, we give precise examples of functions $V_1$ and $V_2$.  \\

{\it b) A general form for the energy of a Voronoi tessellation}\\
Similarly, we define the energy $E_\L$ of the Voronoi tessellation by
\begin{equation}\label{vor}
E_\L(\g_\L,\g_{\L^c}) = \sum_{C\in \vor_\L(\g)} V_1(C) + \sum_{
\begin{subarray}{l}
\{C,C'\} \subset  \vor(\g) \\
C\sim_{\vor}C' \\
C \text { or } C' \text{ in } \vor_\L(\g)
\end{subarray}
}  V_2(C,C'),
\end{equation}
where $V_1$ is a function from the space of bounded convex sets $\C$ to $\R\cup\{+\infty\}$ and $V_2$ is a symmetric function from $\C^2$ to $\R\cup\{+\infty\}$. In Section 2.3,  a precise example of functions $V_1$ and $V_2$ is provided.

\subsection{Three explicit reference models}\label{troisexemples}

We present in this section three explicit examples of Gibbs Delaunay-Voronoi models, that will be our reference models until the end of the paper. All the functions $V_1$ and $V_2$ given in this section satisfy the assumptions in Section 5.1 and so the associated Gibbs Delaunay-Voronoi tessellations exist. \\

{\it Model 1: a non stationary crystallized triangulations model.}\\
In this model, we propose to define a non stationary random triangulation in which the angles of triangles are forced to be larger than a fixed real $\alpha$ in $[0,\frac{\pi}{3}[$. If $\alpha$ is chosen close to $\frac{\pi}{3}$, the model produces rigid random triangulations. Moreover, it is possible to have a non stationary density of points.\\
We assume  that the intensity measure $\nu$ is absolutely continuous with respect to the Lebesgue measure $\l$ and the energy $E_\L$ is defined by (\ref{del}) with
$$ V_1(T)= \left\{ \begin{array}{cl}
 +\infty & \text{ if } \alpha(T)\le \alpha, \\
 0 & \text{ otherwise, }
 \end{array} \right.\qquad \text{and} \qquad  V_2=0,$$
where $\alpha(T)$ is the minimal angle inside $T$. In fact, the energy $E_\L$ is equal to plus infinity if there exists a triangle inside $\L$ which is too flat. \\

{\it Model 2: a stationary interacting Delaunay model.}

In this model, we propose to study an example of stationary Delaunay triangulation with interaction. It is a simple model in order to present different practical and theoretical aspects in this work (modeling, simulation, estimation). This model has not the ambition to be directly usable in physics or biology.

First of all, we fix the intensity measure $\nu$ to be equal to $z\lambda$. Via a geometric hardcore interaction, we force the edges not to be too small, the triangles not to be too large and via a smooth interaction the large perimeters of triangles are favored or penalized (depending on the sign of $\theta$). More precisely, let $0<\vp<\alpha$ and $\theta$ be in $\R$, the energy $E_\L$ is defined by (\ref{del}) with 
$$ V_1(T)=\left\{ \begin{array}{cl}
 +\infty & \text{ if } l(T)\le \vp, \\
 +\infty & \text{ if } R(T)\ge \alpha,\\
 \theta \per(T) & \text{ otherwise, }
 \end{array} \right.\qquad \text{and} \qquad V_2=0,$$
where $l(T)$ is the minimal length of sides of $T$, $R(T)$ is the radius of the circumscribed ball of $T$ and $\per(T)$ is the perimeter of $T$. \\

{\it Model 3: a Voronoi tessellation model.}

In this third example, the interaction is defined as far as possible to fit with the biological applications evoked in the introduction (geometric regularities of cells, interaction between cells), although other interactions could be chosen. We suppose that the model is stationary so we fix the intensity $\nu$ to be equal to $z\lambda$. The geometry of cells is controlled by a hardcore interaction $V_1$ which forces the cells not to be too small,  too large or  too flat. Moreover, we add a smooth interaction involving a competition between the volumes of neighbours cells.\\ 
Let $ 0<\vp<\alpha$,  $B>1/(2\sqrt 3)$ and $\theta$ be in $\R$. The energy $E_\L$  is defined by (\ref{vor}) with 
$$ V_1: C \mapsto V_1(C)=\left\{ \begin{array}{cl}
 +\infty & \text{ if } h_{\min} (C) \le \vp, \\
 +\infty & \text{ if } h_{\max} (C) \ge \alpha,\\
 +\infty & \text{ if } h_{\max}^2 (C) \ge B\vol(C),\\
 0 & \text{ otherwise, }
 \end{array} \right.$$
 and
$$ V_2: (C,C')\mapsto V_2(C,C')= \theta \left(\frac{\max(\vol(C),\vol(C'))}{\min(\vol(C),\vol(C'))}-1\right)^{\frac{1}{2}},$$ 
where $h_{\min}(C)$ is the minimal distance between the center $x$ of the cell $C$ and the edges of the boundary of $C$. Similarly, $h_{\max}(C)$ is the maximal distance between $x$ and the edges of $C$ (see Figure \ref{cell}).

  \begin{figure}[htbp]
  \begin{center}
  \begin{picture}(200,150) 
  \put(0,5){\includegraphics[scale=0.3]{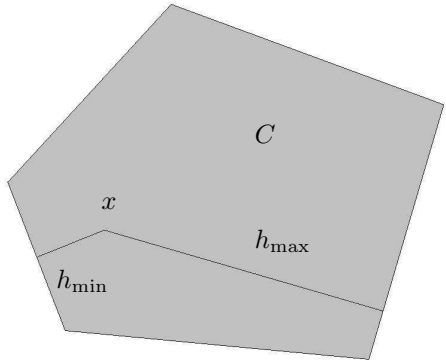}}
  \put(42,65){$x$}
  \put(25,35){$h_{\min}$}
  \put(100,50){$h_{\max}$}
  \put(100,90){$C$}
  \end{picture}

  \caption{{\small Example of Voronoi cell $C$ with center $x$ and distances $h_{\min}$, $h_{\max}$.}}\label{cell}
  \end{center}
  \end{figure}

The choice of the power $\frac{1}{2}$ is arbitrary and may be changed. Nevertheless, it seems to lead to more realistic simulations. The parameter $B$ controls the form of the cell: the smaller B, the more regular the cell. For instance, for a hexagonal cell, $B=1/(2\sqrt 3)\approx 0.29$. Let us remark that if $\theta$ is positive the interaction $V_2$ forces the neighbour cells to have the same volume. Conversely, if $\theta$ is negative it forces the neighbour cells to have different volumes. The sign of $\theta$ is crucial in this model.


\section{Simulations}\label{simulations}

\subsection{Gibbs tessellations on a finite window}
In this section, we deal with the simulation of our models. First of all, let us remark that Gibbs Delaunay-Voronoi tessellations are processes on $\Rd$ and so one has to restrict or approximate them inside a fixed window $\L$. The most natural choice would be to simulate the restriction on $\L$ but it is almost impossible to do it. Therefore the common method is to consider the finite volume Gibbs approximations on $\L$, which is the probability measure absolutely continuous with respect to the Poisson process $\pi_\L^\nu$ with the density $f_\L$ given in (\ref{density}). The outside point configuration $\g_{\L^c}$ is then fixed and chosen arbitrarily. Results in statistical mechanics show, in general, that the thermodynamic limits of these finite volume Gibbs measures, when the volume $\L$ goes to $\Rd$, are Gibbs measures (see \cite{Pr}). Therefore, finite volume Gibbs measures are good approximations of our models.

There are essentially three possibilities to fix the outside configurations. The first possibility is to consider the empty outside configuration but it is not usable in our context because it produces non bounded Delaunay-Voronoi cells and so non computable energies. The second possibility is to fix an outside point configuration which has to be specified explicitly. Again, this is not practical because the strong hardcore interaction, which appears in our three reference models, makes it difficult to find such a configuration. The third possibility is the periodic outside configuration which is built by periodization in the full plane $\Rd$ of the random configuration inside $\L$. We choose this approach because it seems relevant to deal with the hardcore problems coming from the boundary effects.   

Now, let us give the precise construction of the periodic finite volume Gibbs measure. The simulation window $\L$ is chosen as the square $[0,1]^2$. Any other window in $\Rd$ can be considered in the same way. A simple rescaling procedure enables us to reduce to this case. For every point configuration $\g$ in $[0,1]^2$, we denote by $\bar \g$ the periodic configuration on $\R^2$ defined by 
\begin{equation}\label{periodise}\bar \g= \bigcup_{i\in\mathbb Z^2} \tau_i(\g),\end{equation}
where $\tau_i$ is the translation in $\Rd$ with respect to the vector $i$. 

In the sums over the cells in (\ref{del})  or (\ref{vor}), we must ensure that each collection of periodic cells has a unique contribution in the computation of the periodic energies. A solution consists in selecting only the cells whom barycenters are in $[0,1]^2$. Therefore, for any Voronoi cells $C$ (respectively Delaunay triangle $T$), we denote by $<C>$ (respectively $<T>$) the barycenter of the cell $C$ (respectively triangle $T$). Similarly, for any couple of Voronoi cells $(C,C')$ (respectively triangles $(T,T')$) we denote by $<C,C'>$ (respectively $<T,T'>$) the barycenter of the set $C\cup C'$ (respectively $T\cup T'$) .

The periodic energy $\bar E(\g)$ associated to the energy of a Delaunay tessellation (\ref{del}) is defined by 
\begin{equation}\label{delper}
\bar E(\g) = \sum_{\begin{subarray}{c}
T\in \del(\bar\g)\\
<T> \in [0,1]^2
\end{subarray}
} V_1(T) + \sum_{
\begin{subarray}{c}
\{T,T'\} \subset  \del(\bar\g) \\
C\sim_{\del}C' \\
<T,T'> \in [0,1]^2
\end{subarray}
}  V_2(T,T').
\end{equation}

Similarly the periodic energy $\bar E(\g)$ associated to the energy of a Voronoi tessellation (\ref{vor}) is defined by 
\begin{equation}\label{vorper}
\bar E(\g) = \sum_{\begin{subarray}{c}
C\in \vor(\bar\g)\\
<C> \in [0,1]^2
\end{subarray}
} V_1(C) + \sum_{
\begin{subarray}{c}
\{C,C'\} \subset  \vor(\bar\g) \\
C\sim_{\vor}C' \\
<C,C'> \in [0,1]^2
\end{subarray}
}  V_2(C,C').
\end{equation}

Now let us define the Gibbs process on $[0,1]^2$ with periodic outside configuration. 

\begin{definition}\label{Pbar}
The periodic Gibbs Delaunay-Voronoi tessellation $\bar P$ is the point process on $[0,1]^2$ which is absolutely continuous with respect to the Poisson point process on $[0,1]^2$ (denoted by $\pi^\nu_0$), where the density $\bar f$ is defined, for every $\g$ in $[0,1]^2$, by
$$ \bar f(\g) = \frac{1}{\bar Z} e^{-\bar E(\g)} \quad \text{and} \quad \bar Z= \int e^{-\bar E(\g')}\pi^\nu_0 (d\g').$$
\end{definition}

\begin{remarque}
In the case where the intensity measure and the energy functions are stationary, we know there exists at least one stationary Gibbs measure (see Theorems 1 and 2 in the appendix). However, non stationary Gibbs measures may exist too. This phenomenon is called 
the breakdown of symmetry (see \cite{Pr} 4.1, for instance). It is not proved theoretically that symmetry breakdown can occur for the models studied in this paper. Sometimes, this phenomenon can be observed via simulations. Nevertheless let us remark that this will not be possible with the periodic Gibbs models simulated here. Indeed, if the intensity measure and the energy functions are stationary, as in models 2 and 3, then the associated periodic Gibbs models are stationary too and the symmetry breakdown is not observable. Free or configurational boundary conditions should be investigated.   
\end{remarque}

\subsection{Algorithm of simulation}\label{algorithm}

As explained above, to deal with the boundary effects, we choose to simulate periodic tessellations. For sake of brevity, we confuse $\bar \g$ and $\g$ in this section, similarly we will use $f$ instead of $\bar f$. The window of simulation is $\L=[0,1]^2$ and we omit the indexation by $\L$ in the sequel. Moreover, the notation $\Mdi$ is abusively extended in this section to the periodic tessellations with finite energies.

There exist different algorithms to simulate finite volume Gibbs processes. Some perfect simulation algorithms have been developed  (see \cite{GT}, \cite{PW}) but they seem not to be really implementable in our context due to the strong rigidity of our hardcore models. So we make the choice to simulate them via the classical Birth-Death-Move Metropolis-Hastings algorithm, which we recall below (see also \cite{MW}).

For $x\in [0,1]^2$, $\mathcal N (x,\sigma^2)$ denotes the Gaussian distribution on $\R^2$ centered at $x$ with covariance matrix $diag(\sigma^2,\sigma^2)$, where $\sigma>0$. This law is the proposal density for moving a point. Note that if the moved point falls outside $[0,1]^2$ (in step 5. below), it is replaced inside $[0,1]^2$ by the periodic property. We assume that the intensity measure $\nu$ is absolutely continuous with respect to the Lebesgue measure, and we denote by $g$ its density. In particular, in the stationary case, i.e. when $\nu=z\lambda$ with $z>0$, then $g$ is identically equal to $z$.

\begin{enumerate}
\item For $\g_0\in\Mdi$, let $n=card(\g_0)$.

\item Draw independently $a$ and $b$ uniformly on $[0,1]$.

\item If $a<1/3$ then generate $x$ uniformly on $[0,1]^2$ and set 
\begin{equation*}
\g_1=\begin{cases} \g_0+x & \textrm{if }\ b< \frac{f(\g_0+x)g(x)}{(n+1)f(\g_0)},\\
\g_0 &\textrm{otherwise.}
\end{cases}
\end{equation*}

\item If $a>2/3$ then generate $x$ uniformly on $\g_0$ and set 
\begin{equation*}
\g_1=\begin{cases} \g_0-x & \textrm{if }\ b< \frac{n f(\g_0-x)}{f(\g_0)g(x)},\\
\g_0 &\textrm{otherwise.}
\end{cases}
\end{equation*}

\item If $1/3<a<2/3$ then generate $x$ uniformly on $\g_0$, generate $y\sim\mathcal N(x,\sigma^2)$ and set
\begin{equation*}
\g_1=\begin{cases} \g_0-x+y & \textrm{if }\ b< \frac{f(\g_0-x+y)}{f(\g_0)},\\
\g_0 &\textrm{otherwise.}
\end{cases}
\end{equation*}

\item Iterate from 1. where $\g_0\leftarrow\g_1$.
\end{enumerate}

This algorithm can be refined: The probability of move, birth or death proposals may differ and may depend on $x$, similarly the law for choosing a point before killing it, adding it or moving it may be chosen properly (e.g. according to the intensity law $\nu$).  The idea of this procedure is that, starting from an allowed configuration $\g_0$, the iterations converge to the realization of an invariant measure which is the Gibbs process we want to simulate. For classical Gibbs point processes, this convergence is proved for example in Section 7.3 of \cite{MW}. In our setting, the convergence is not obvious. It  mainly relies on the following connectivity property (see also Definition \ref{connected} in the appendix): from any allowed configuration $\g$, it is possible to reach another allowed configuration $\g'$ thanks to an iterative birth-death-move procedure as above. Since a hardcore Gibbs tessellation may be very rigid, this property does not always hold (consider for instance the Delaunay tessellations where all the triangles are imposed to be almost equilateral). Yet, in most situations, the connectivity exists. Let us note that the moving step is crucial here, because it allows the connectivity of rigid tessellations that a simple birth-death procedure would not.

We show in the appendix that, under some assumptions, the algorithm converges. These assumptions are fulfilled for Model 2 when $\; 2\varepsilon < \alpha < \frac{1}{2}$. For the Voronoi tessellation presented in Model 3, we claim that the convergence holds for a large reasonable set of parameters $(\epsilon, \alpha, \theta)$. However, in this case theoretical justifications become tedious and are not achieved in this paper.

\subsection{Practical implementation}

In the above algorithm, the choice of the initial configuration is crucial. We must start from an allowed configuration $\g_0$ in $[0,1]^2$, i.e.  $\bar\g_0\in\Mdi$. For this reason, we cannot start from the empty configuration. In our simulations, we chose to start from the point configuration whom  Delaunay tessellation is a regular lattice of triangles (see its plot on top left of Figure \ref{model1}). This starting configuration is allowed by all our hardcore models, provided the distance between points is properly chosen. 

The computation of the ratio in steps 3.-5. of the algorithm is time-consuming since it supposes the computation of two tessellations plus a calculus on their cells to obtain their respective density. But it is possible to simplify this computation by focusing on a smaller window. Indeed, consider for instance step 3, i.e. the birth case (the same approach remains true for the death or the move step). When one adds a point $x$ in a configuration, the new tessellation differs from the previous one only in a neighbourhood of $x$. Thus, the ratio of the densities reduces to a difference of energies in this neighbourhood. The size of this neighbourhood is determined by the size of cells around $x$. For instance in Model 2, the diameter of cells is forced to be smaller than  $2\alpha$, so it suffices to focus on a ball with radius $2\alpha$ around $x$ to compute the difference of energies.

When there is a hardcore interaction, the convergence of the algorithm may be slow. Indeed, when one adds a point, the new tessellation can be forbidden. Hence, in presence of  a strong hardcore interaction, most of the new tessellations proposed by the birth or death step in the algorithm will be refused. For this reason, we check the progress of the algorithm by monitoring control. Every one thousand iterations, we count the total number of points in the configuration as well as  the number of  accepted birth steps (among one thousand steps), the number of accepted death steps and the number of accepted move steps. The plot of theses numbers all along the iterative algorithm helps us to  check the stabilization of the iterative process (though this is not a proof of the convergence to the invariant measure).

\subsection{Some examples}
We present some simulations of the models introduced in Section \ref{troisexemples}. We use the Birth-Death-Move algorithm presented before with $\sigma=0.015$.

Figure \ref{model1} shows a simulated tessellation from Model 1 where $\alpha=\pi/6$. Let us denote by  $g$ the density of the non-stationary intensity measure $\nu$. It is 1-periodic with respect to each component, i.e. for all $(x,y)\in\R^2$, $g(x+1,y)=g(x,y)$ and $g(x,y+1)=g(x,y)$, and for $(x,y)\in[0,1]^2$, we assume $$g(x,y)=100[(x-0.5)^2+(y-0,5)^2]^{-0.75}.$$
The triangles of such a tessellation are forced not to be too flat and to be more dense around the point $(0.5,0.5)$. The monitoring control seems to justify the convergence of the algorithm after $5.10^4$ iterations.

  \begin{figure}[htbp]
  \begin{center}
  \hspace{0cm} \includegraphics[angle=0,scale=.29]{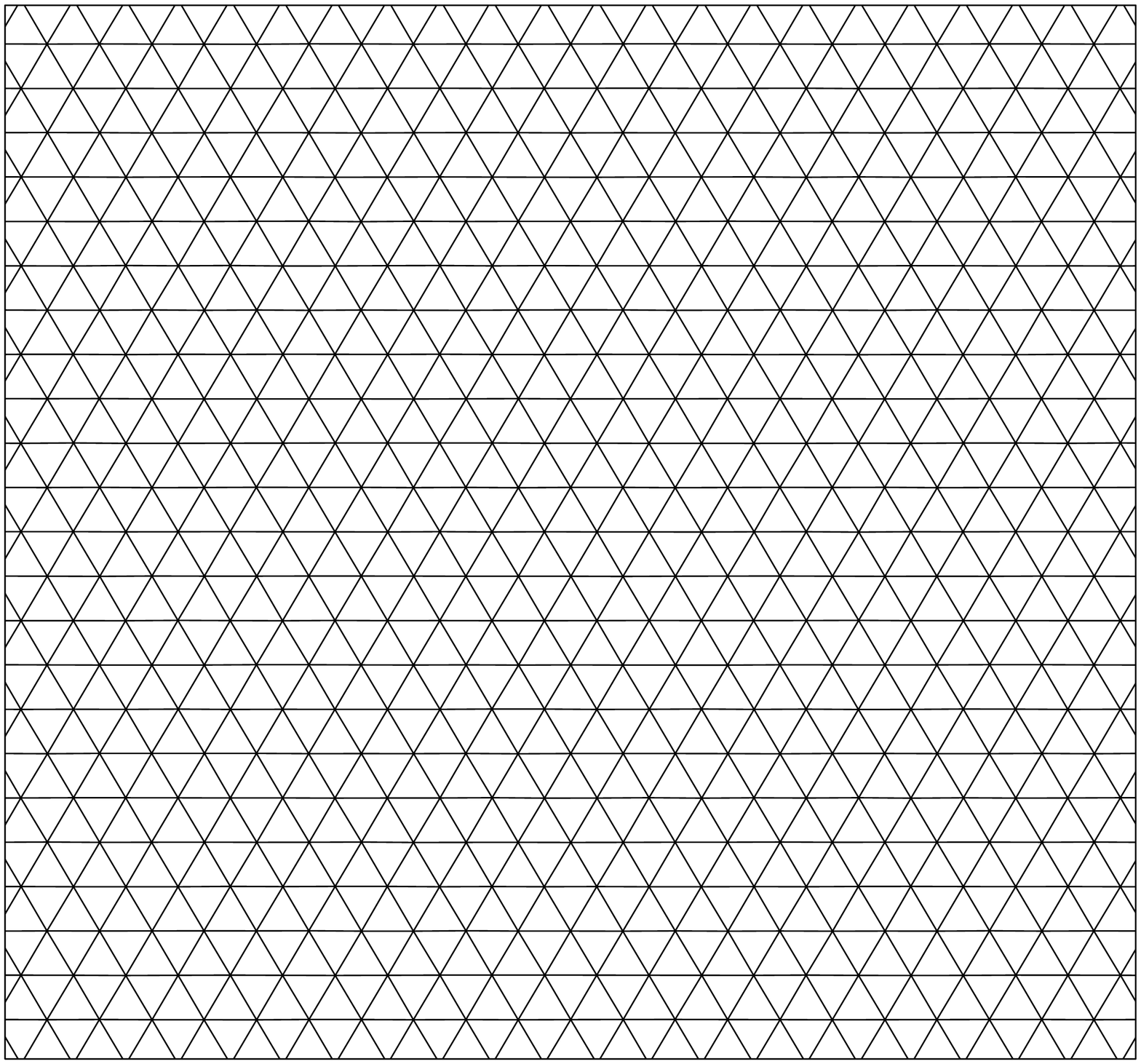}
  \hspace{0cm} \includegraphics[angle=0,scale=.28]{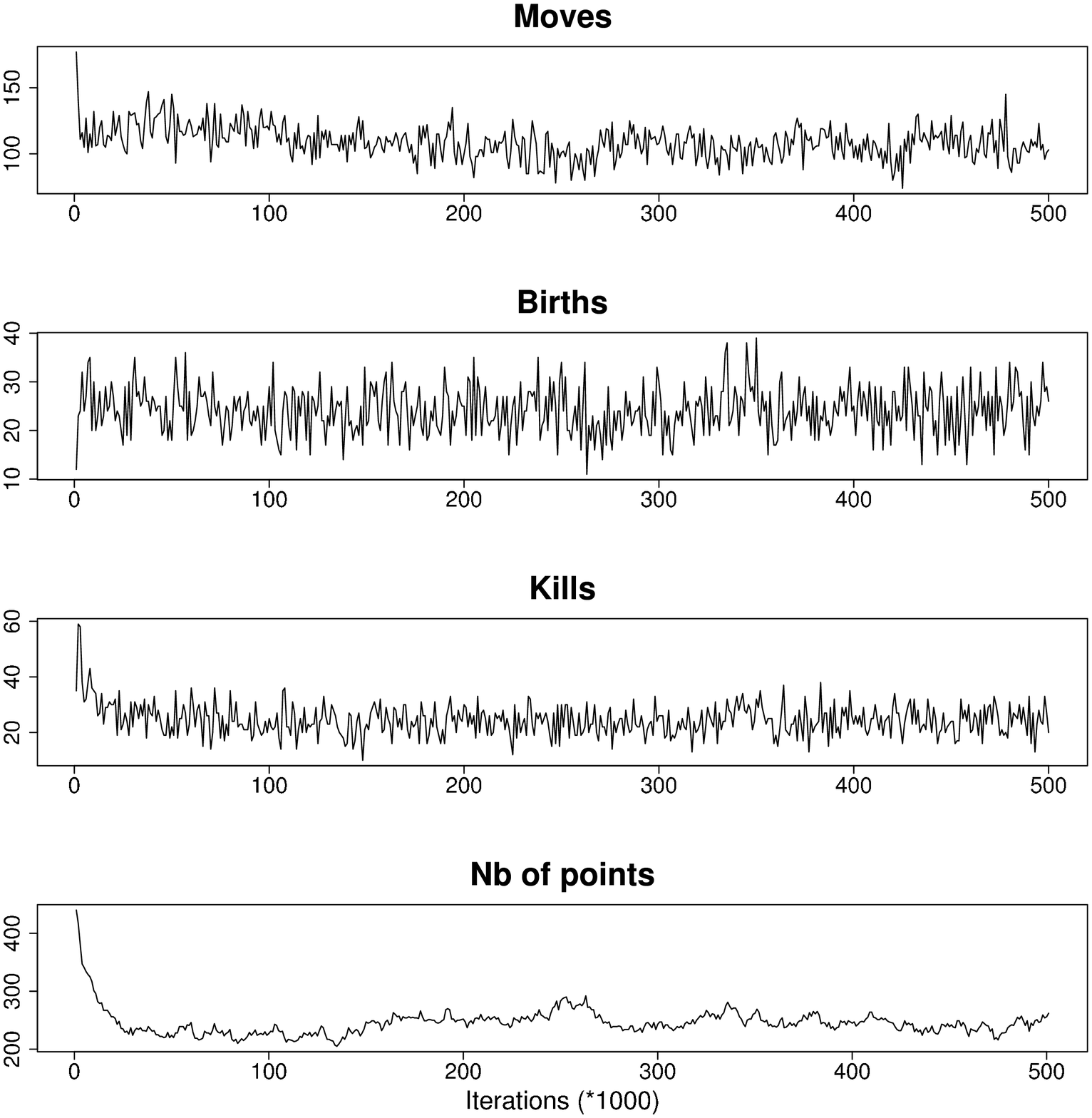}
  \hspace{-0.8cm} \includegraphics[angle=0,scale=.35]{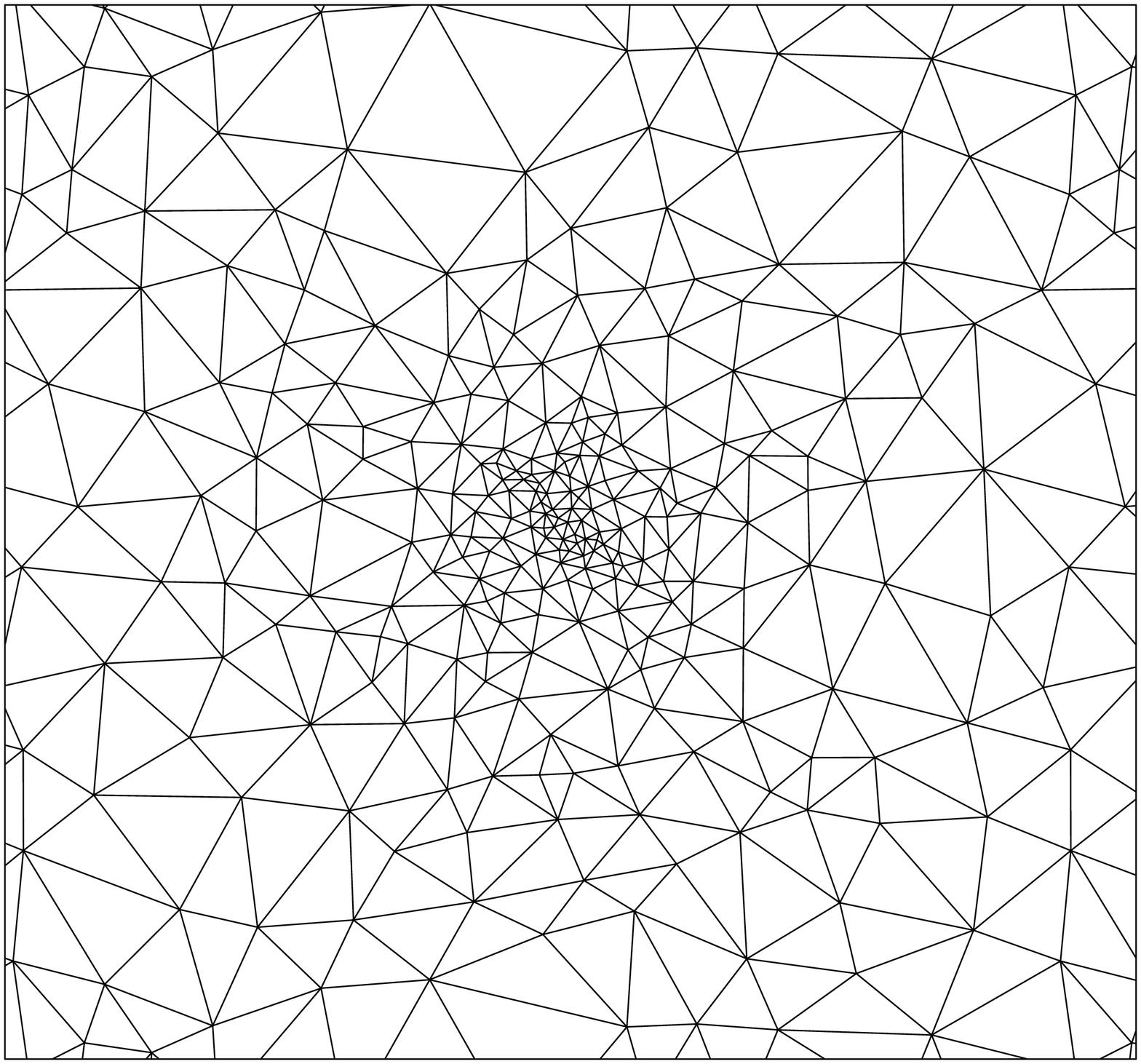}
  \caption{{\small Simulation of Model 1 with $\alpha=\pi/6$ and $g(x,y)=100 [(x-0.5)^2+(y-0.5)^2]^{-0.75}$ ($5.10^5$ iterations). Top left: initial point configuration; Top right: monitoring control (from top to bottom: number of moved points, number of birth points, number of killed points and total number of points, pointed out every 1000 iterations); Bottom: final simulated tessellation.}}\label{model1}
  \end{center}
  \end{figure}

In Figure \ref{model2}, two tessellations from Model 2 have been simulated with $\alpha=0.08$, $z=1000$, and $\theta=\pm 5$. We did not introduce in these simulations the hardcore parameter $\epsilon$. When $\theta>0$ (on bottom), the tessellation is more likely to exhibit a small number of vertices since the total sum of perimeters is then low, which minimizes the energy.  For $\theta<0$ (on top), this is the contrary: the energy is minimal when the total sum of perimeters is high, inducing a lot of vertices in the tessellation. In these two cases, the size of the triangles is controlled by the hardcore parameter $\alpha$, which might be unnecessary when   $\theta<0$ but is certainly a big constraint when $\theta>0$.

Model 3, involving Voronoi tessellation, has been simulated for $\alpha=0.05$, $B=0.625$, $z=100$, and $\theta=-0.8,\ -0.5,\ 0.5,\ †0.8$. The hardcore parameters $\alpha$ and $B$ force the cell not to be too large or  too flat. We did not impose a minimal length of sides through the hardcore parameter $\epsilon$. The parameter $\theta$ quantifies the dependence between the size of neighbour cells: when $\theta<0$ they are most likely to have different sizes, whereas for $\theta>0$ they tend to exhibit the same volume. These two opposite behaviors are clearly observable in the two extreme cases $\theta=-0.8$ and $\theta=0.8$ in Figure \ref{model3}. When $|\theta|=0.5$, this difference is more difficult to distinguish. A challenging task will be to properly estimate the parameters $\alpha$, $B$, $z$, and $\theta$ in both the apparently closed situation $\theta=-0.5$ and $\theta=0.5$. This problem is addressed in the next section.

In Figure \ref{model3-monitoring}, one can check the convergence of the algorithm for the four above simulations. Note that in the very rigid case $\theta=0.8$, the iteration process needs a lot of time (tens of thousands iterations) before starting a birth-death step. This shows the importance of the moving step.

More simulations of Model 3 are presented in Figure \ref{model3bis} with their monitoring control in Figure \ref{model3bis-monitoring}. They show the impact of $B$ on the geometry of the cells. It plays a bigger role when $\theta<0$, since in this case we note the cells may be very flat without this hardcore parameter ($B=+\infty$). Let us remark that some clusters of small cells appear when $\theta<0$. This is more visible when $B$ is not too constraining.

  \begin{figure}[htbp]
    \setlength{\tabcolsep}{0.1cm} \centerline{
      \begin{tabular}[]{cc}
  \includegraphics[angle=0,scale=.29]{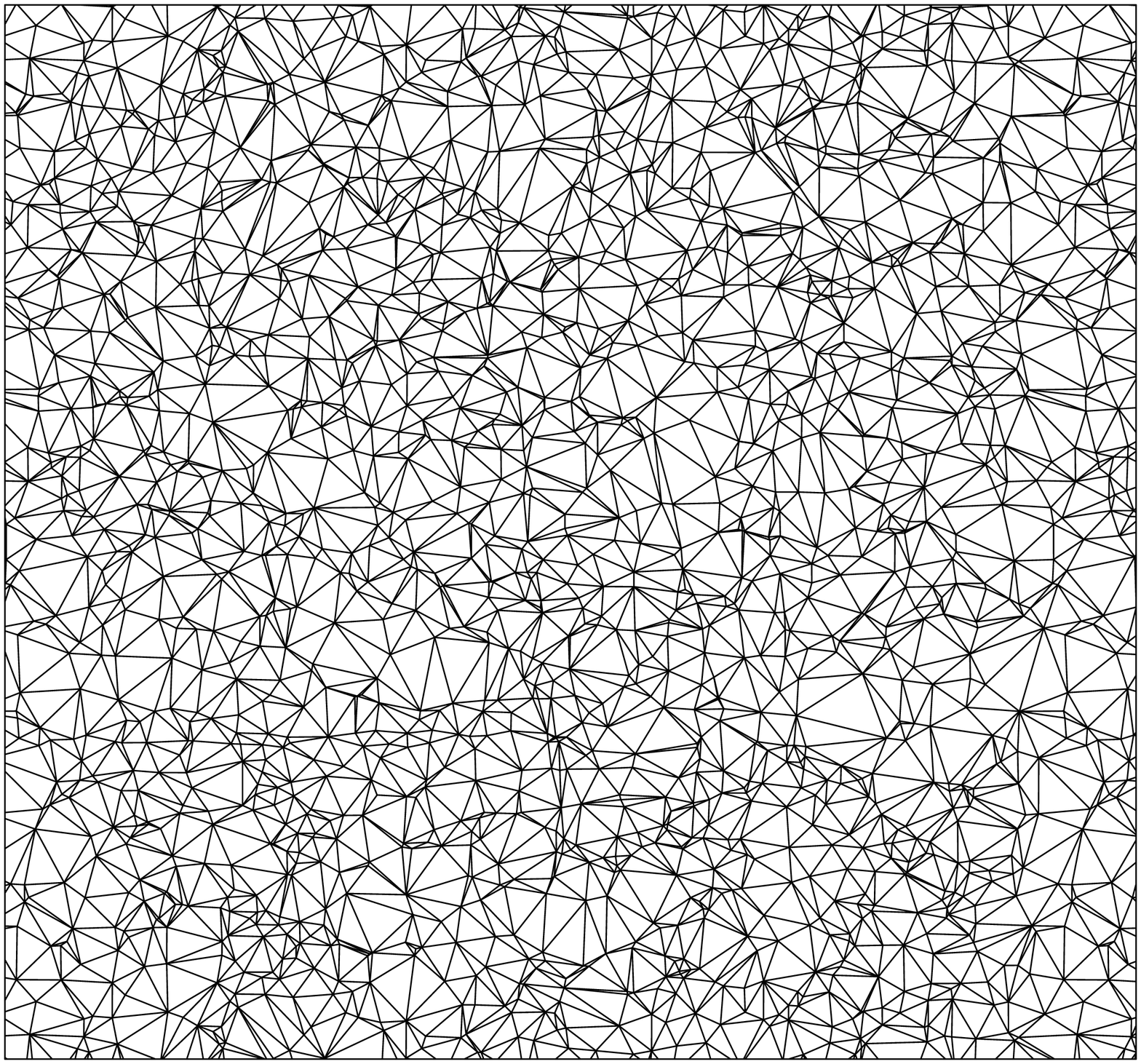} &
  \includegraphics[angle=0,scale=.28]{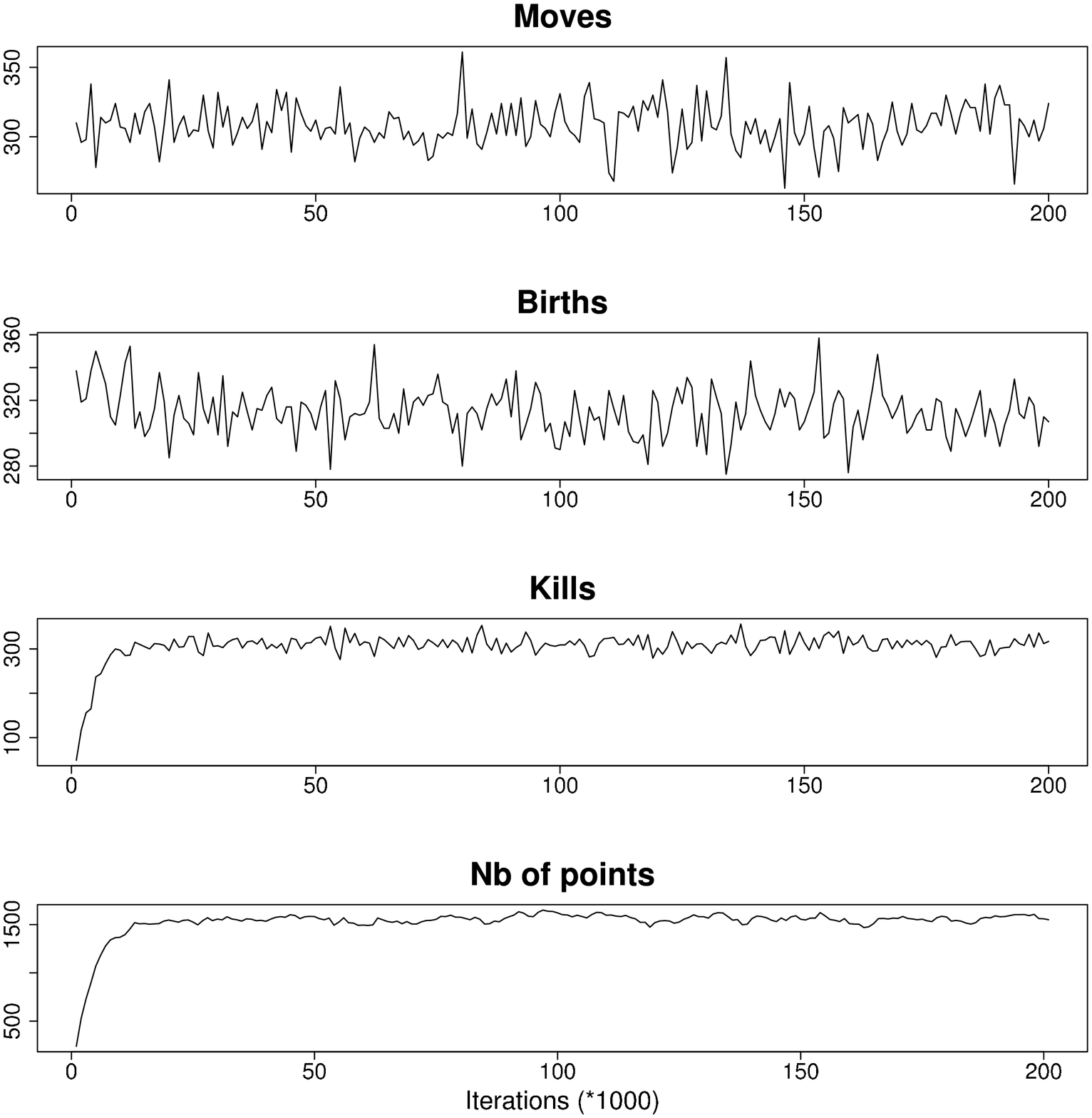} \\
  \footnotesize {$\theta=-5$} & \footnotesize {} 
     	 \\
  \includegraphics[angle=0,scale=.29]{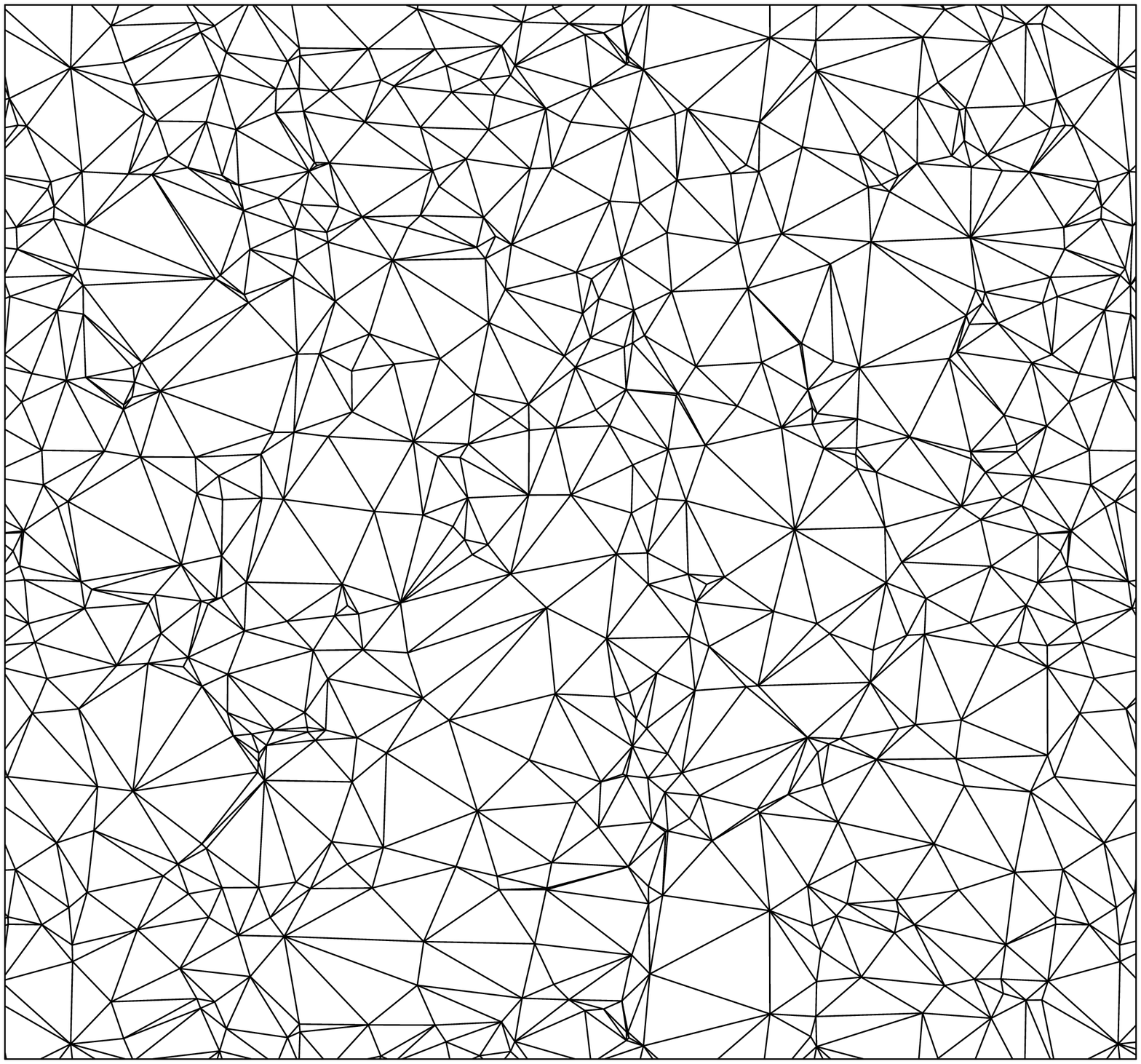}&
   \includegraphics[angle=0,scale=.28]{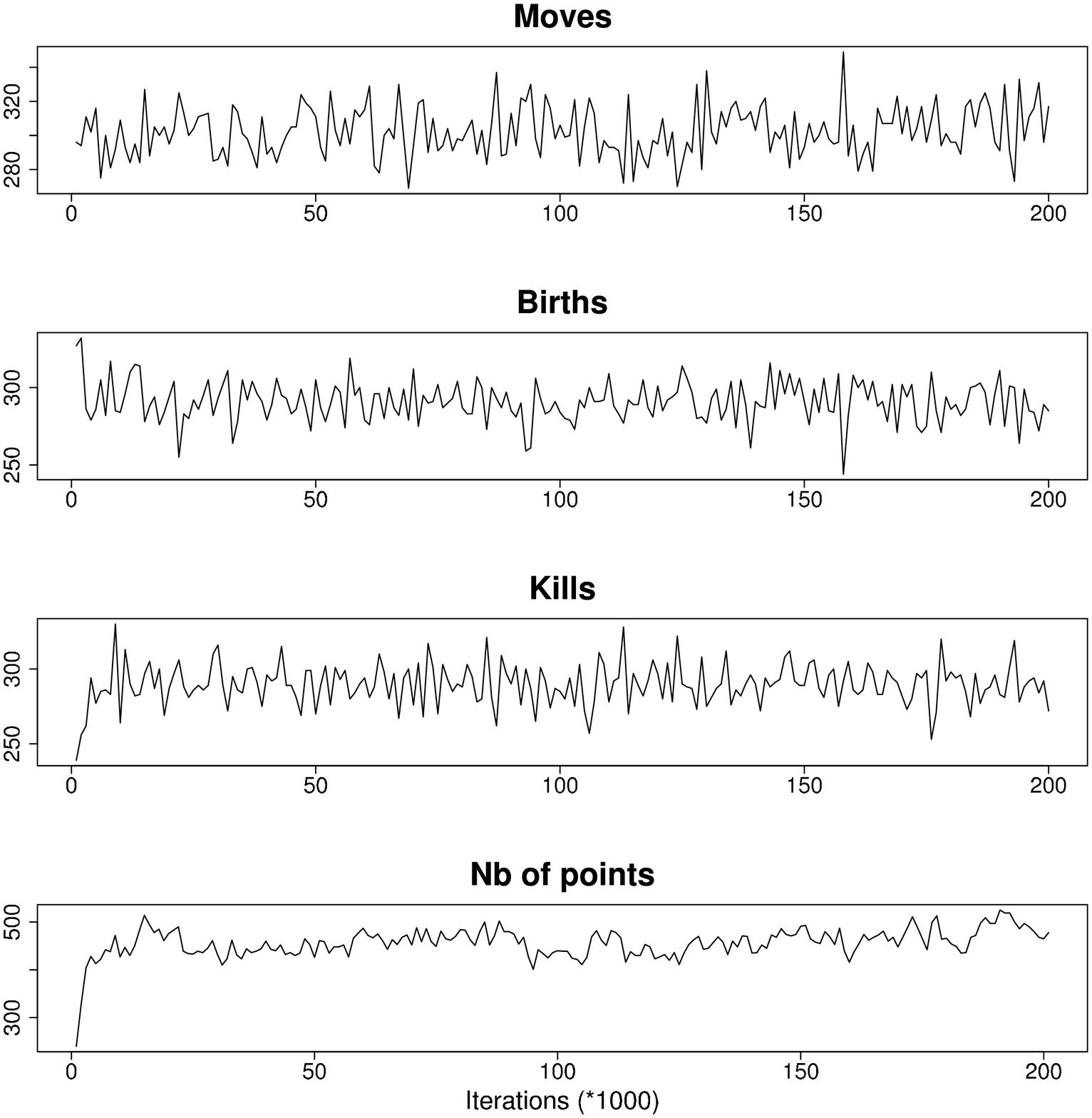}\\
          \footnotesize {$\theta=5$} &
          \footnotesize {}
      \end{tabular}
  }
   \caption{{\small Simulation of Model 2 with $\alpha=0.08$, $z=1000$ and $\theta=-5$ (top), $\theta=5$ (bottom) after $2.10^5$ iterations. Top left: final simulated tessellation when $\theta=-5$; Top right: monitoring control when $\theta=-5$ (with the same plots as for Figure \ref{model1}); Bottom left: final simulated tessellation when $\theta=5$; Bottom right: monitoring control when $\theta=5$ (with the same plots as before).}}\label{model2}

  \end{figure}

  \begin{figure}[htbp]
    \setlength{\tabcolsep}{0.1cm} \centerline{
      \begin{tabular}[]{cc}
  \includegraphics[angle=0,scale=.29]{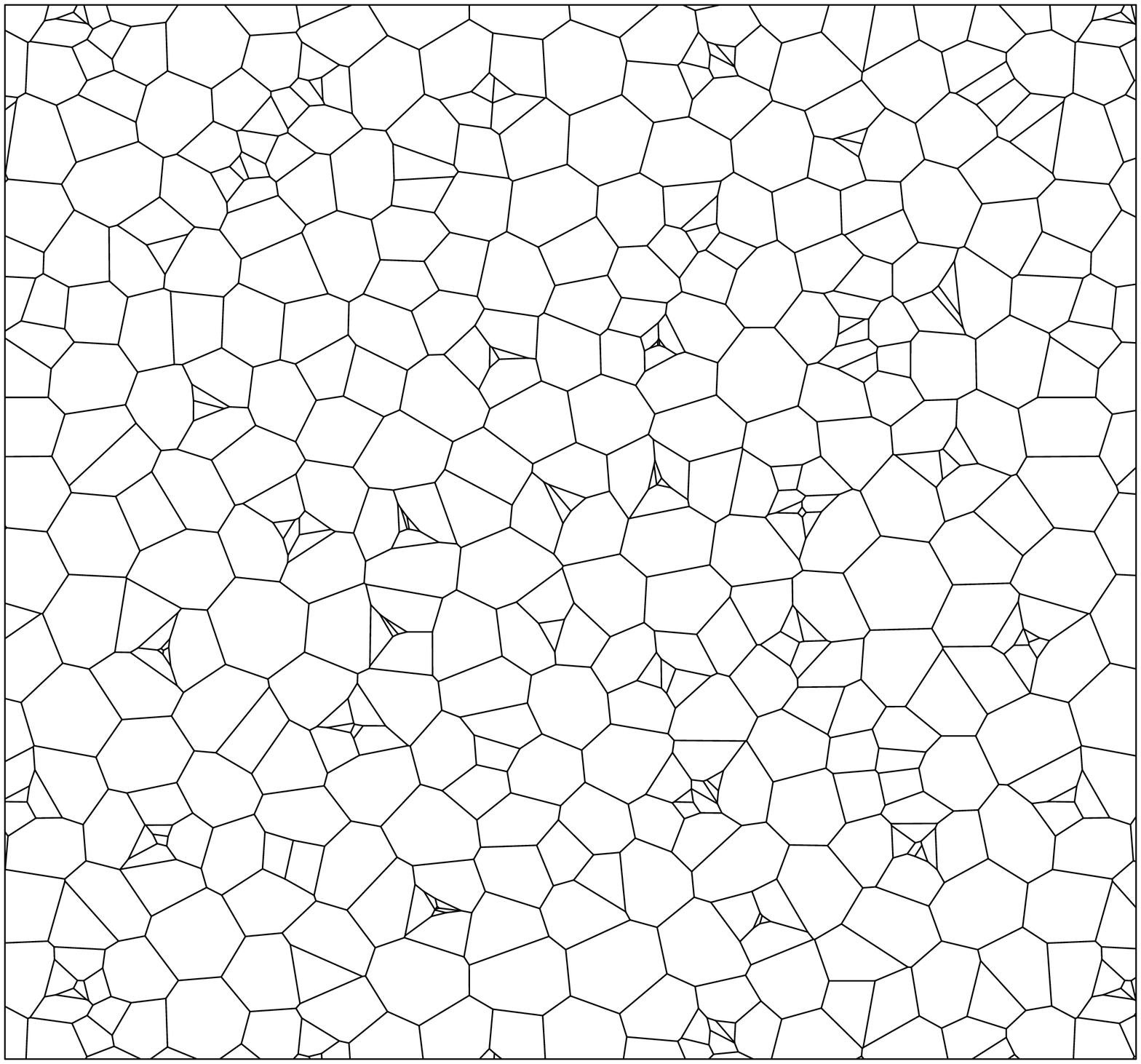} &
  \includegraphics[angle=0,scale=.29]{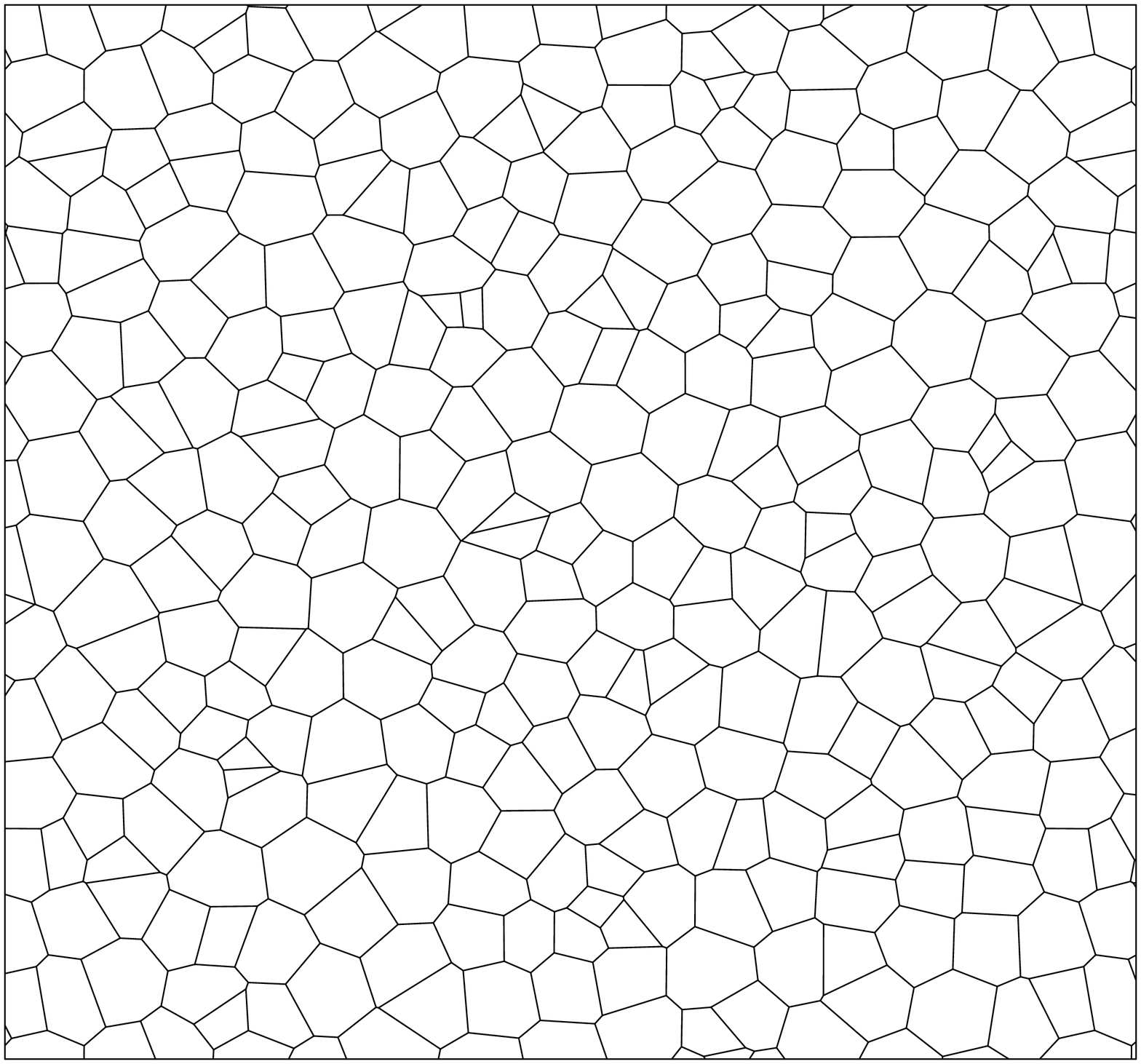} \\
  \footnotesize {$\theta=-0.8$} &
  \footnotesize {$\theta=-0.5$} \\
  \includegraphics[angle=0,scale=.29]{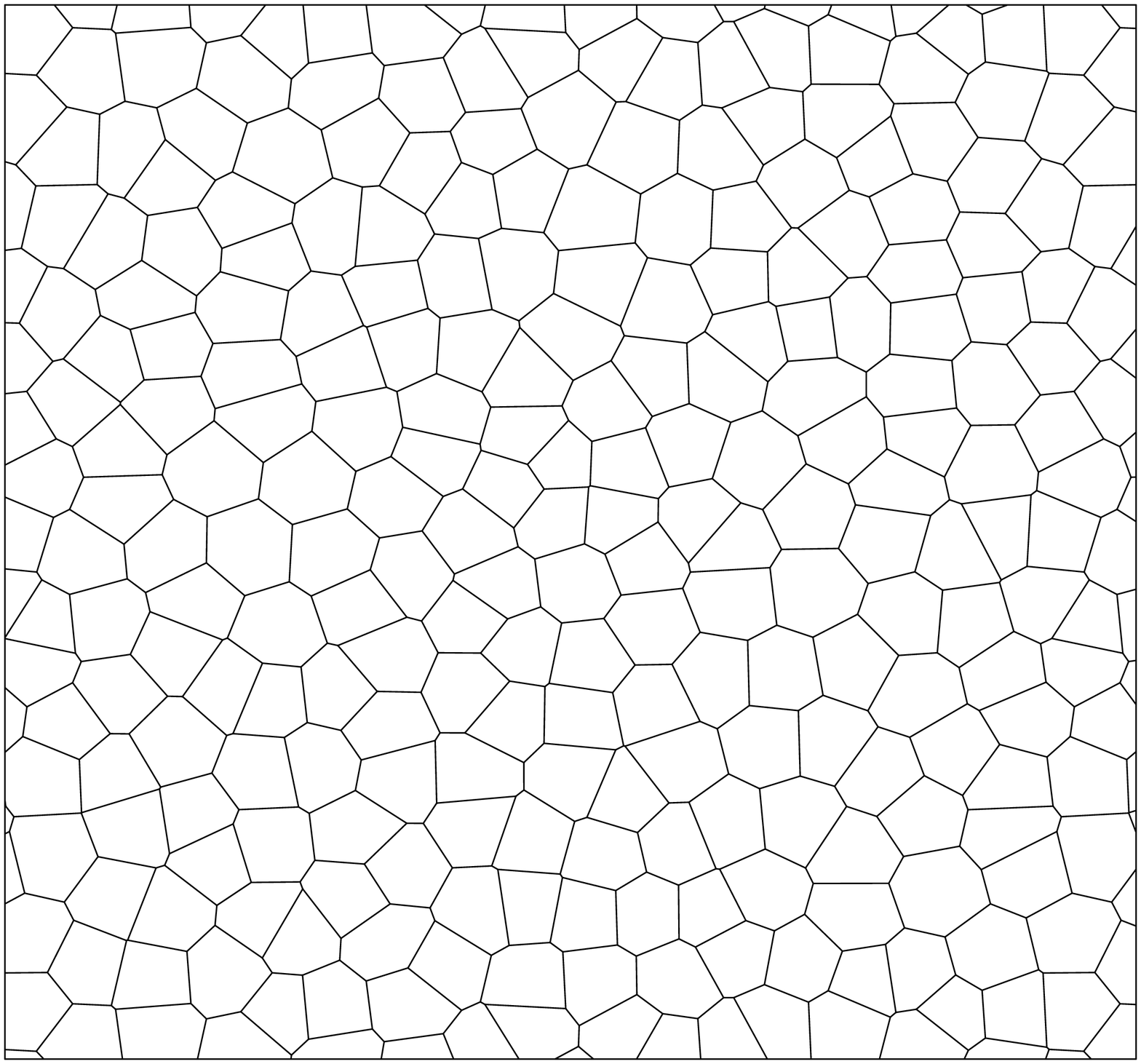}&
   \includegraphics[angle=0,scale=.29]{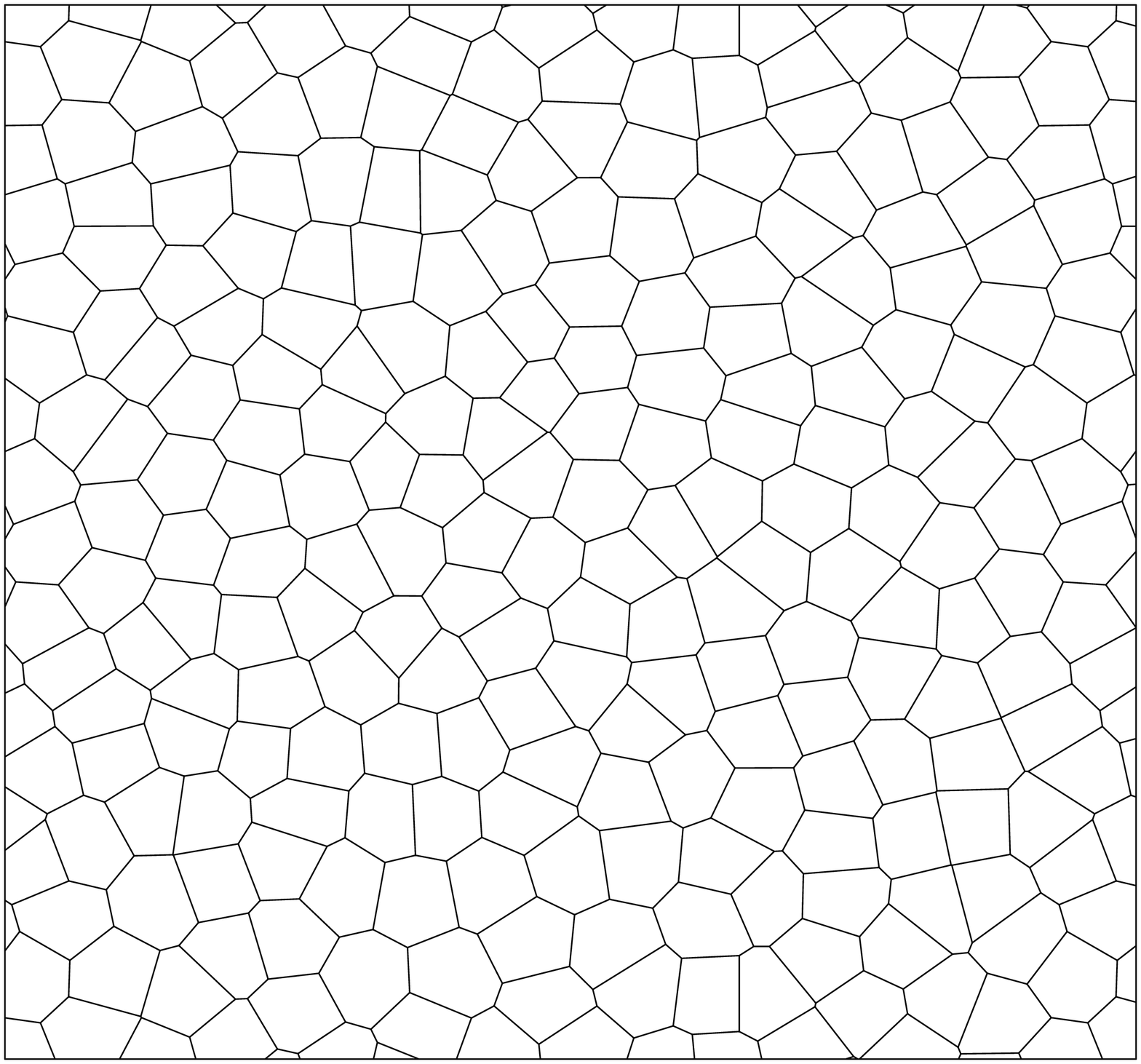}\\
          \footnotesize {$\theta=0.5$} &
          \footnotesize {$\theta=0.8$}
      \end{tabular}
  }
  \caption{{\small Simulation of Model 3 with $\alpha=0.05$, $B=0.625$, $z=100$ and $\theta=-0.8$ (top left), $\theta=-0.5$ (top right), $\theta=0.5$ (bottom left) and $\theta=0.8$ (bottom right). These are the final simulated tessellations after $2.10^5$ iterations when $|\theta|=0.5$ and $5.10^5$  iterations when $|\theta|=0.8$ (see the monitoring control in Figure \ref{model3-monitoring}).}}\label{model3}
  \end{figure}

  \begin{figure}[htbp]
  \begin{center}
  \hspace{0cm} \includegraphics[angle=0,scale=.28]{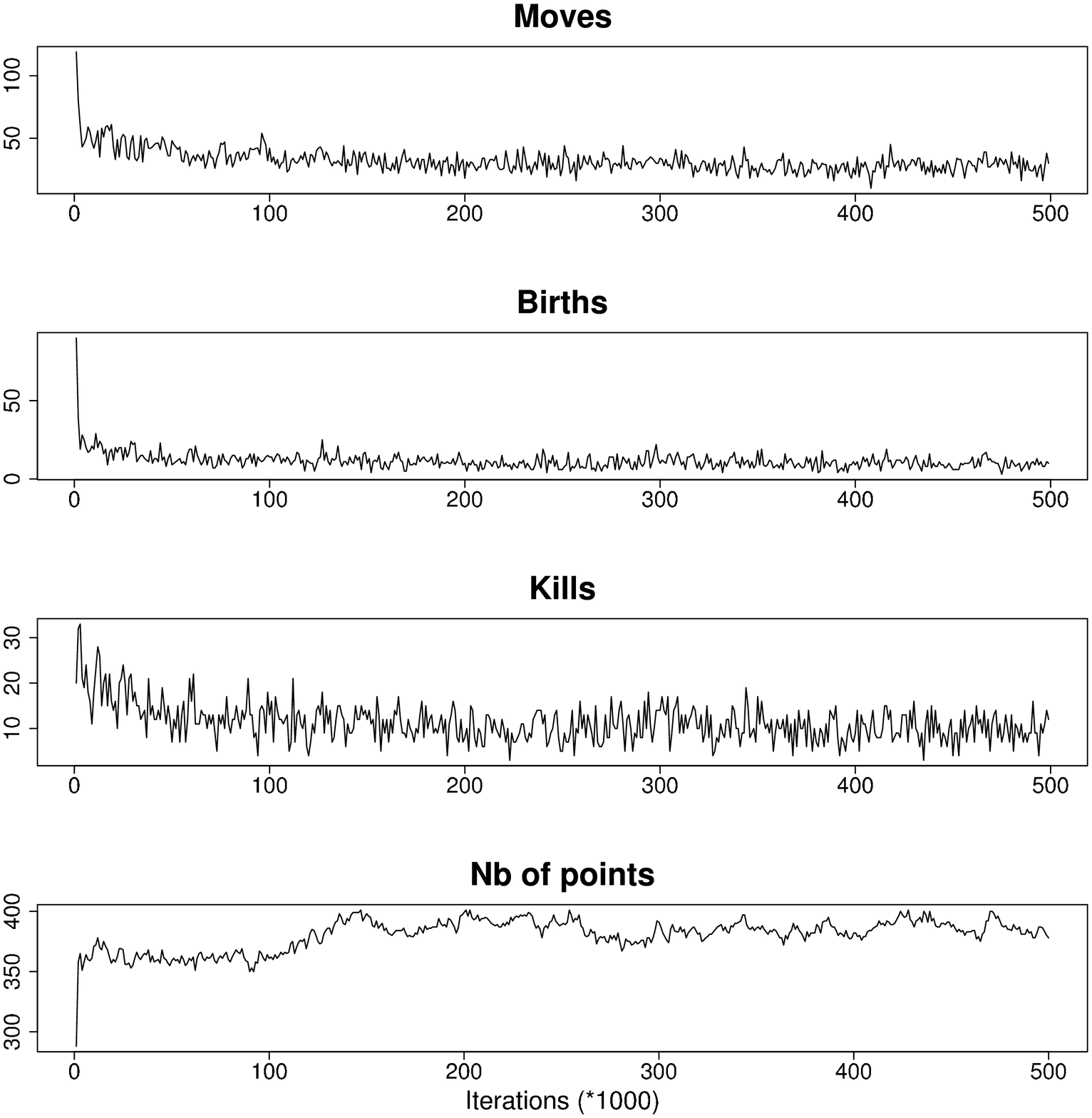}
  \hspace{0.4cm} \includegraphics[angle=0,scale=.28]{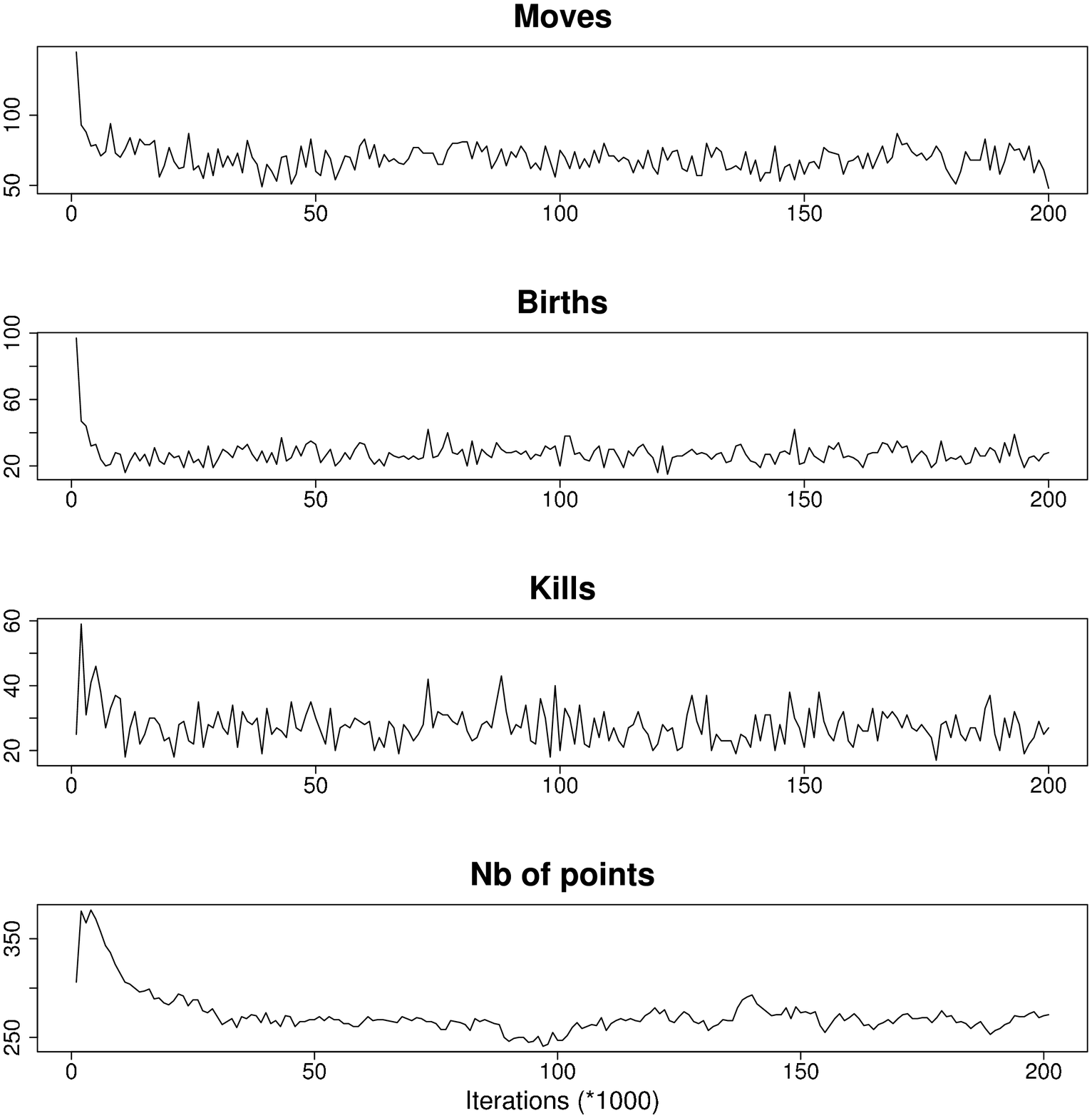}

  \vspace{1cm}

  \hspace{0cm} \includegraphics[angle=0,scale=.28]{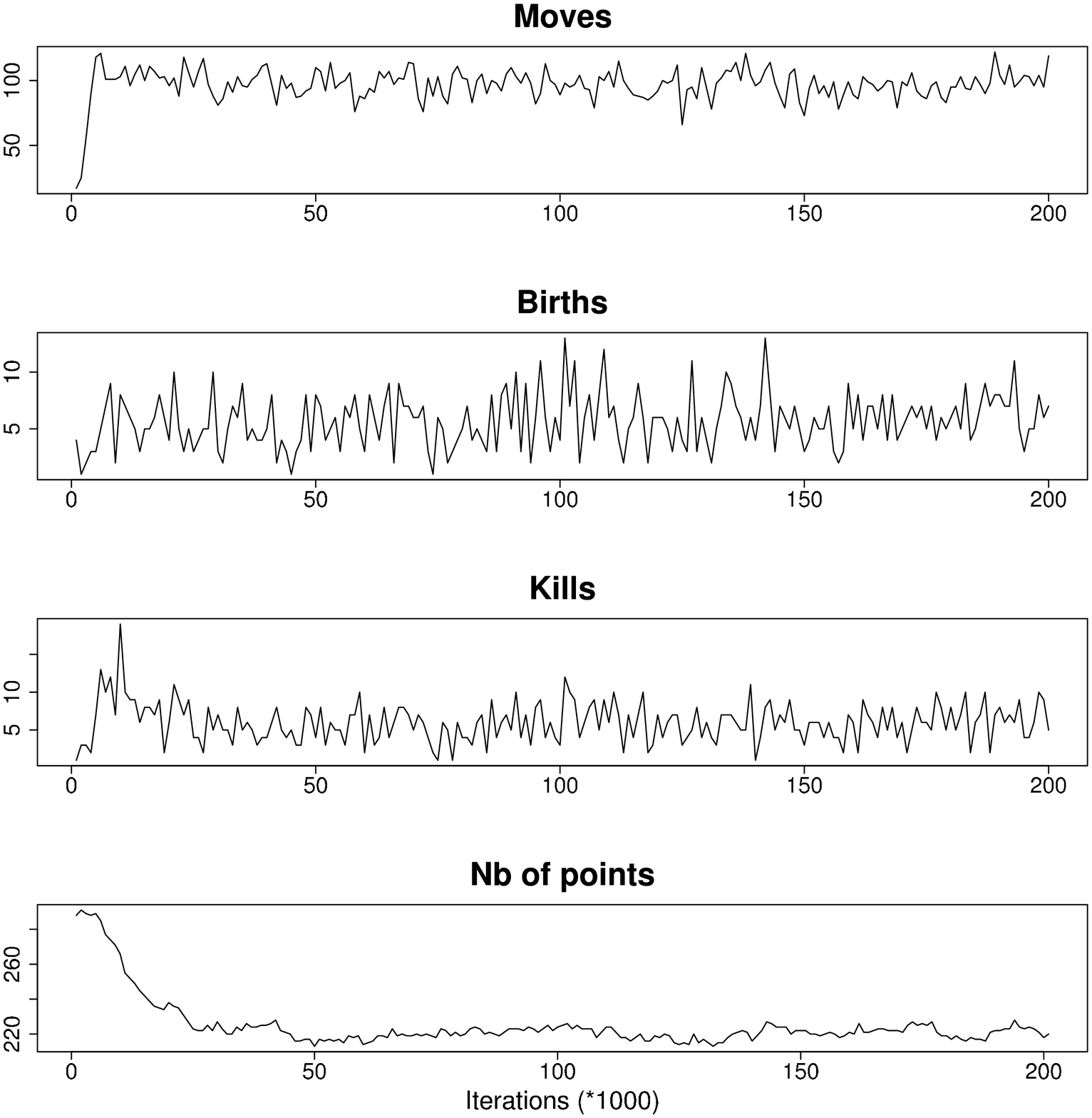}
  \hspace{0.4cm} \includegraphics[angle=0,scale=.28]{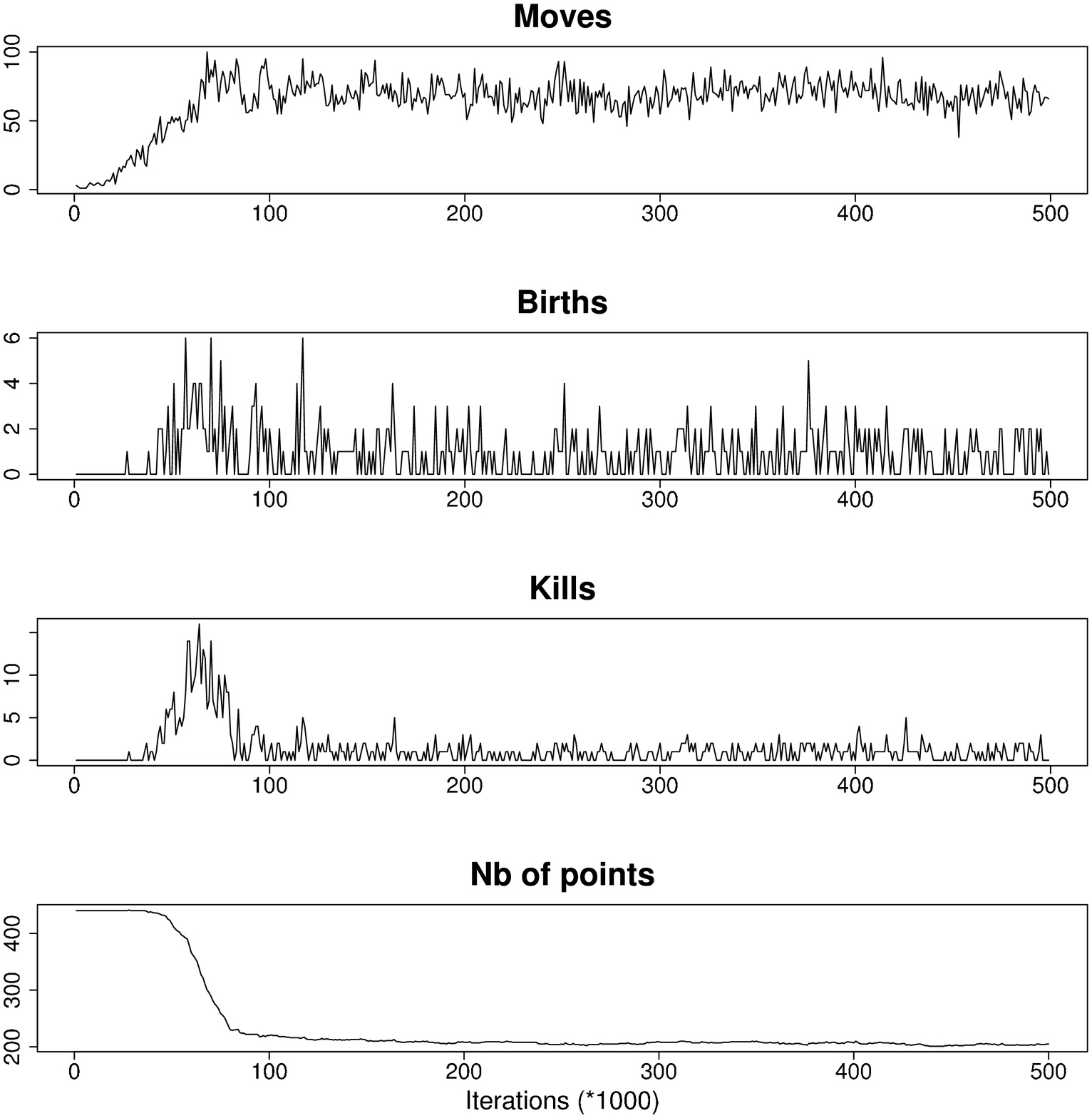}

  \caption{{\small Monitoring control for the simulations of Model 3 presented in Figure \ref{model3}, in the same order (from top left to bottom right: $\theta=-0.8,\ -0.5,\ 0.5,\ 0.8$). They consist in the same plots as for Figure \ref{model1}.}}\label{model3-monitoring}
  \end{center}
  \end{figure}

  \begin{figure}[htbp]
    \setlength{\tabcolsep}{0.1cm} \centerline{
      \begin{tabular}[]{cc}
  \includegraphics[angle=0,scale=.29]{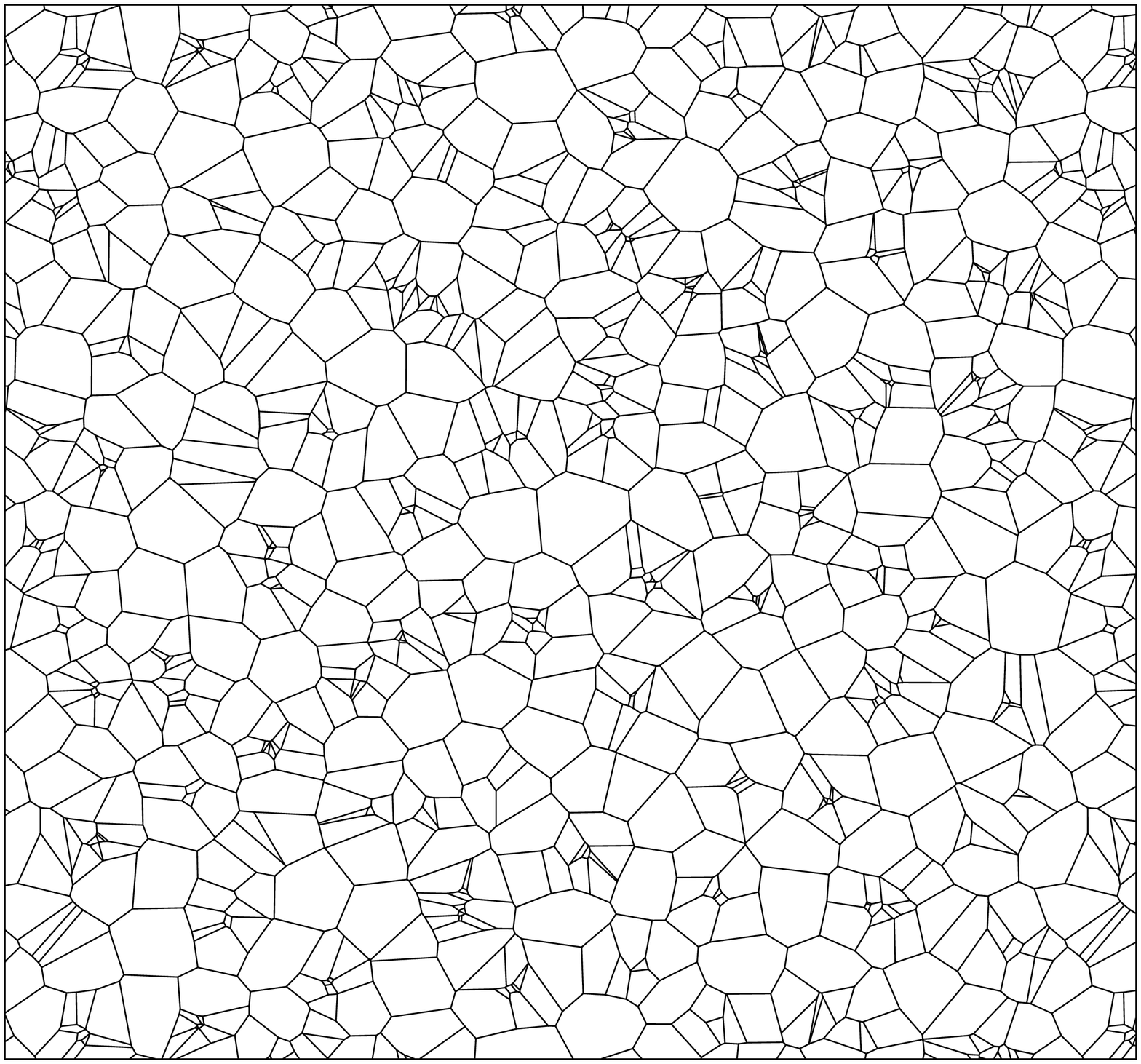} &
  \includegraphics[angle=0,scale=.29]{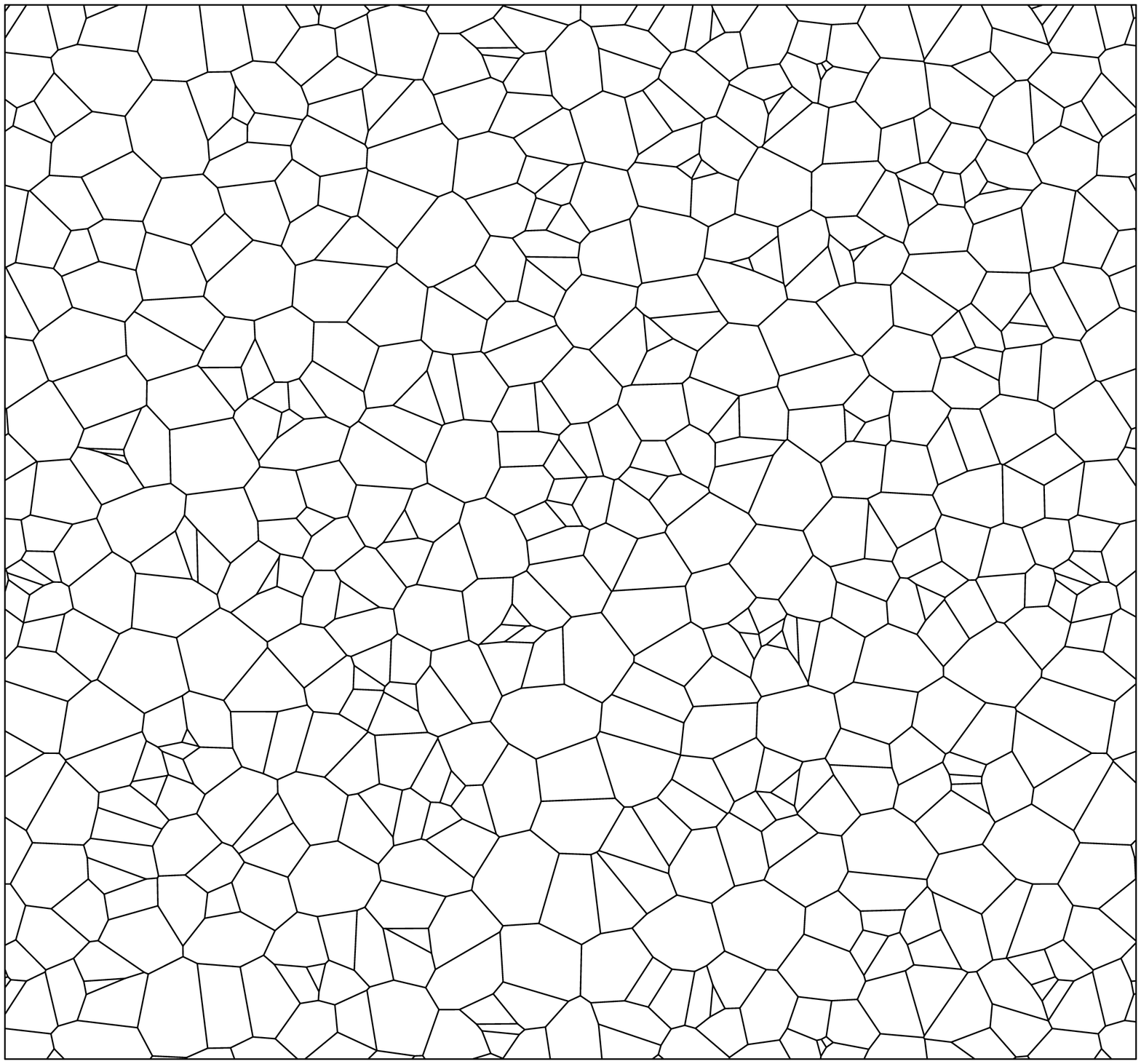} \\
  \footnotesize {$B=+\infty, \quad \theta=-0.5$} &
  \footnotesize {$B=1, \quad \theta=-0.5$} \\
  \includegraphics[angle=0,scale=.29]{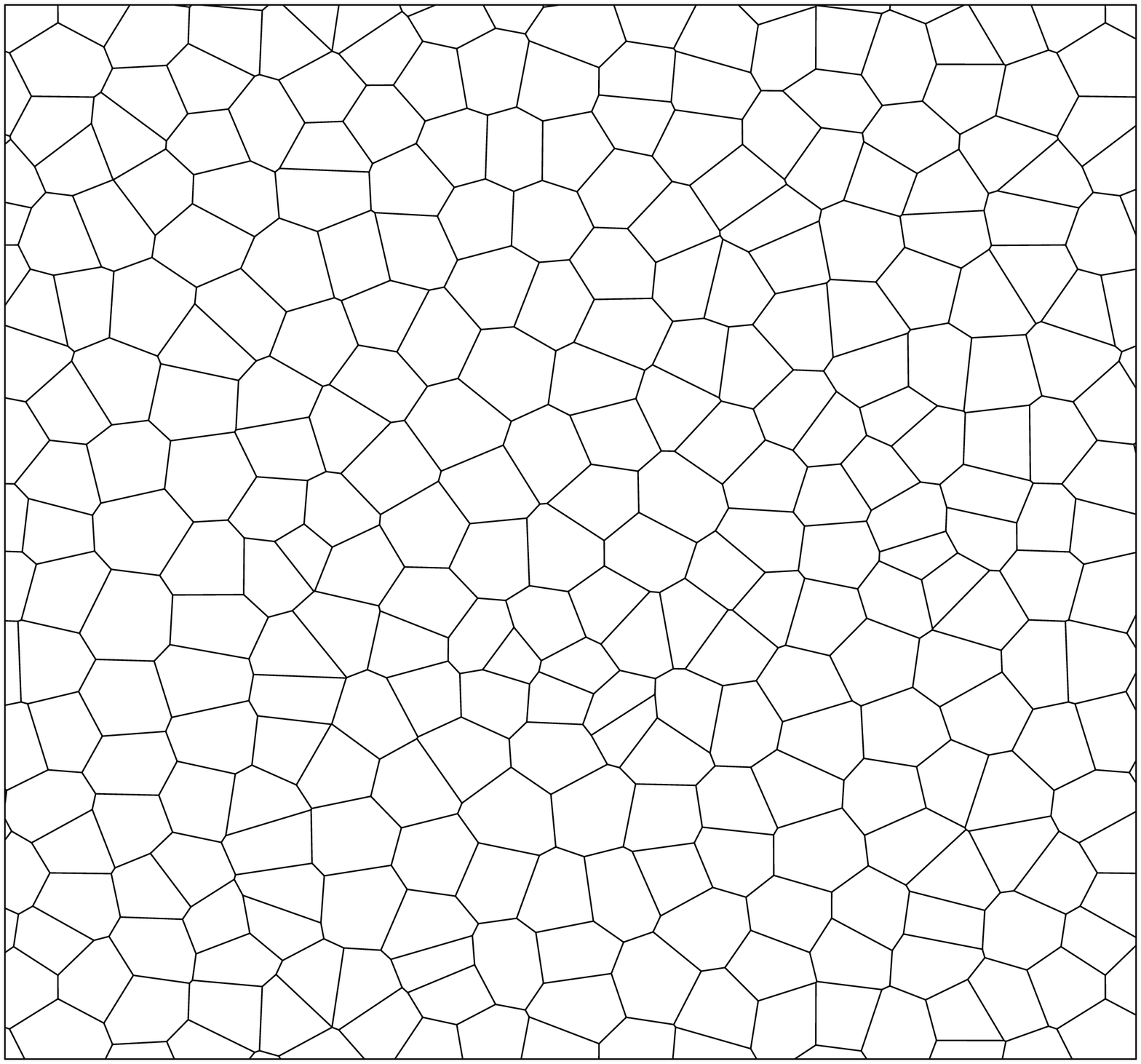}&
   \includegraphics[angle=0,scale=.29]{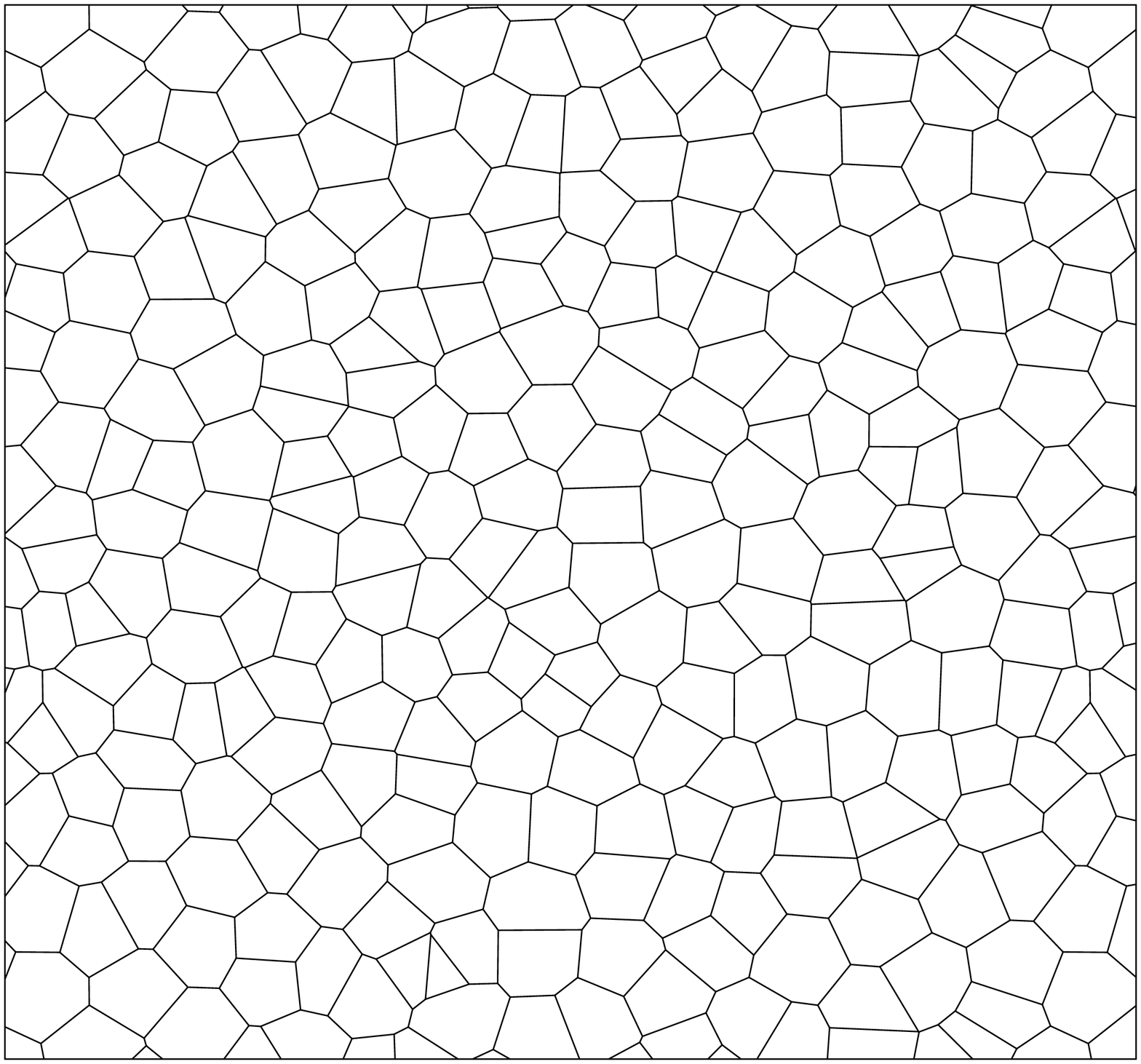}\\
          \footnotesize {$B=+\infty, \quad \theta=0.5$} &
          \footnotesize {$B=1, \quad \theta=0.5$}
      \end{tabular}
  }
        \caption{{\small Simulation of Model 3 with $\alpha=0.05$, $z=100$, $B=+\infty$ (left), $B=1$ (right), $\theta=-0.5$ (top), $\theta=0.5$ (bottom). These are the final simulated tessellations after $1.5.10^5$ iterations (see the monitoring control in Figure \ref{model3bis-monitoring}).}}\label{model3bis}

  \end{figure}

  \begin{figure}[htbp]
  \begin{center}
  \hspace{0cm} \includegraphics[angle=0,scale=.28]{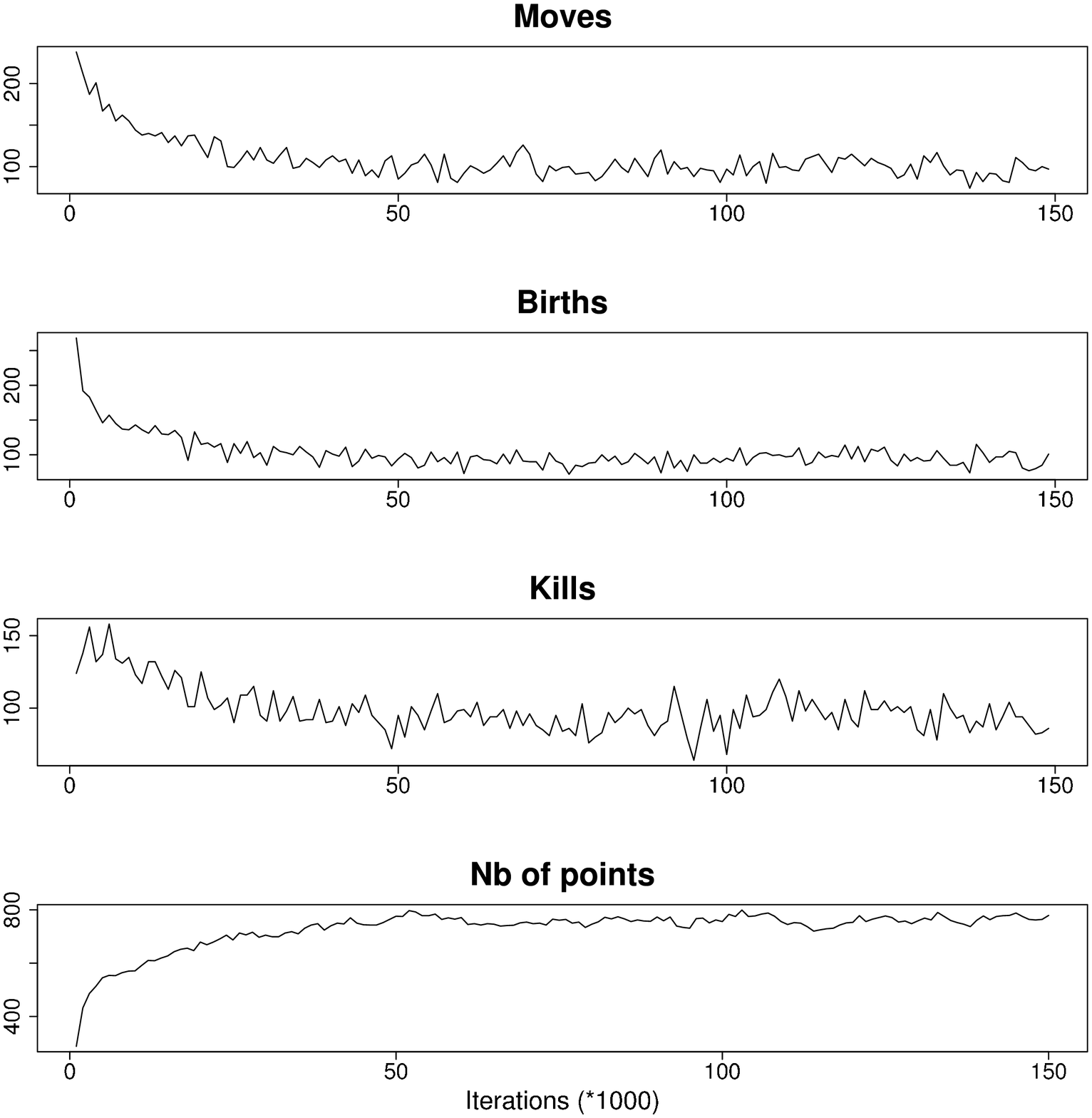}
  \hspace{0.4cm} \includegraphics[angle=0,scale=.28]{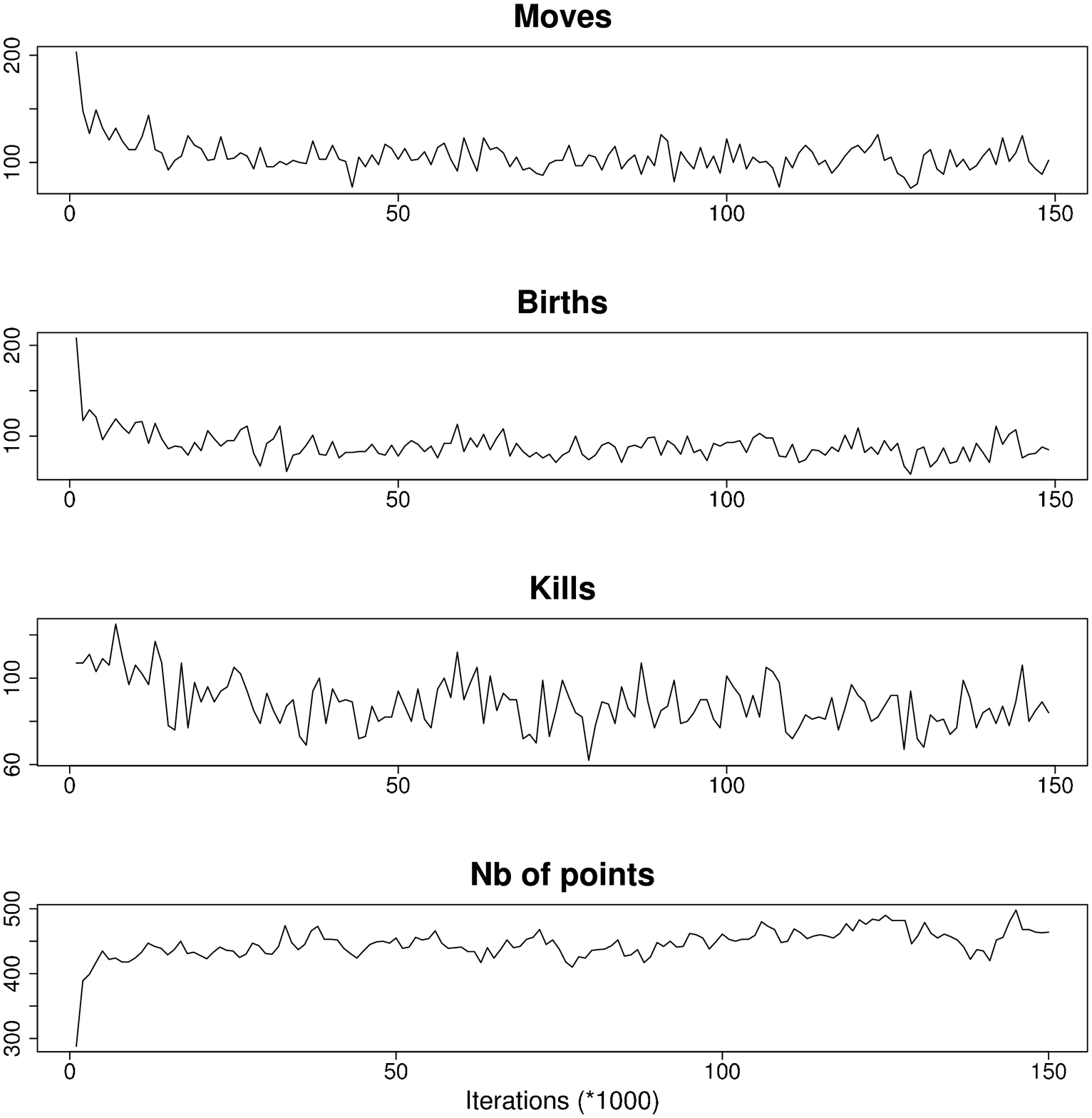}

  \vspace{1cm}

  \hspace{0cm} \includegraphics[angle=0,scale=.28]{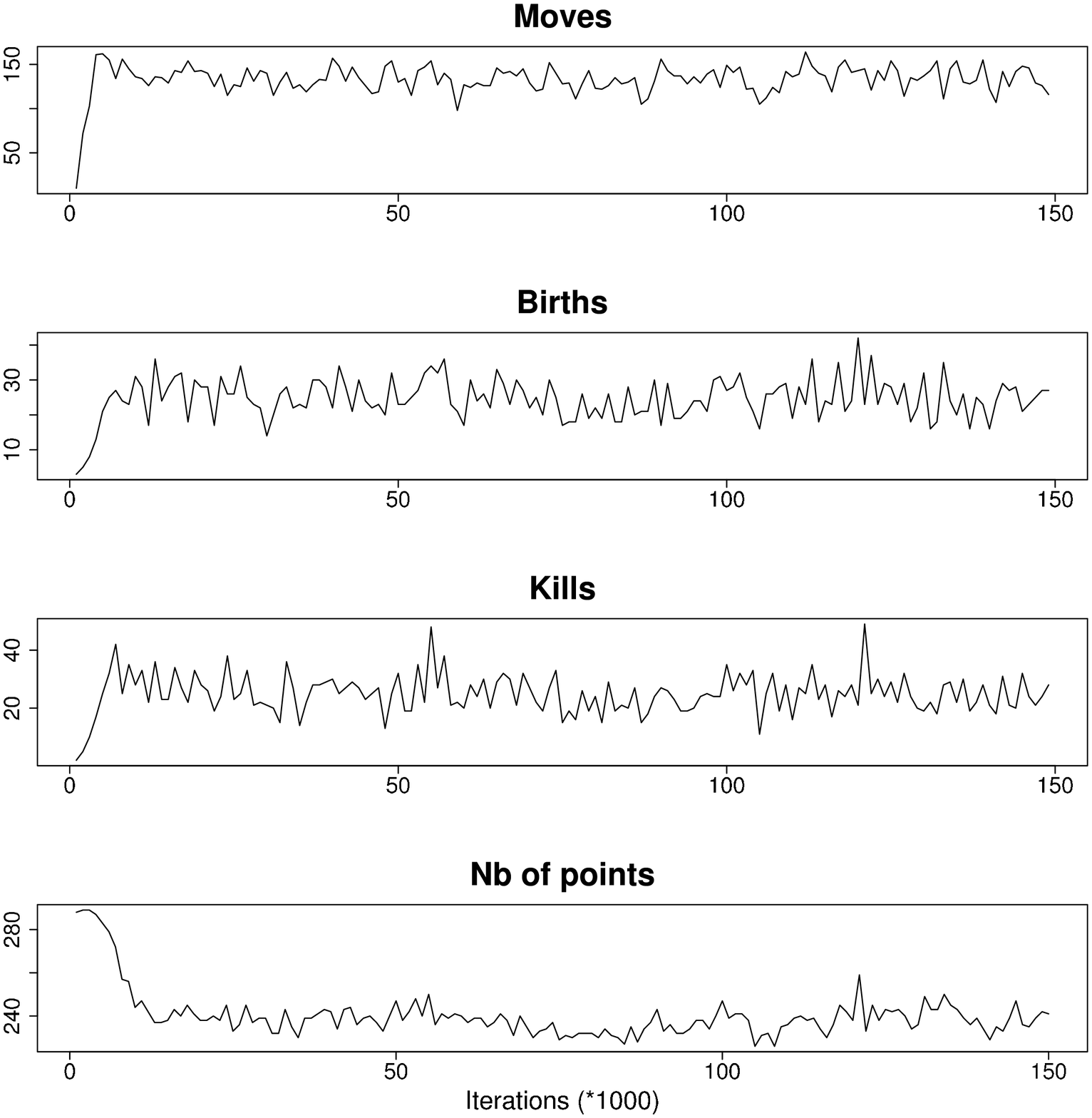}
  \hspace{0.4cm} \includegraphics[angle=0,scale=.28]{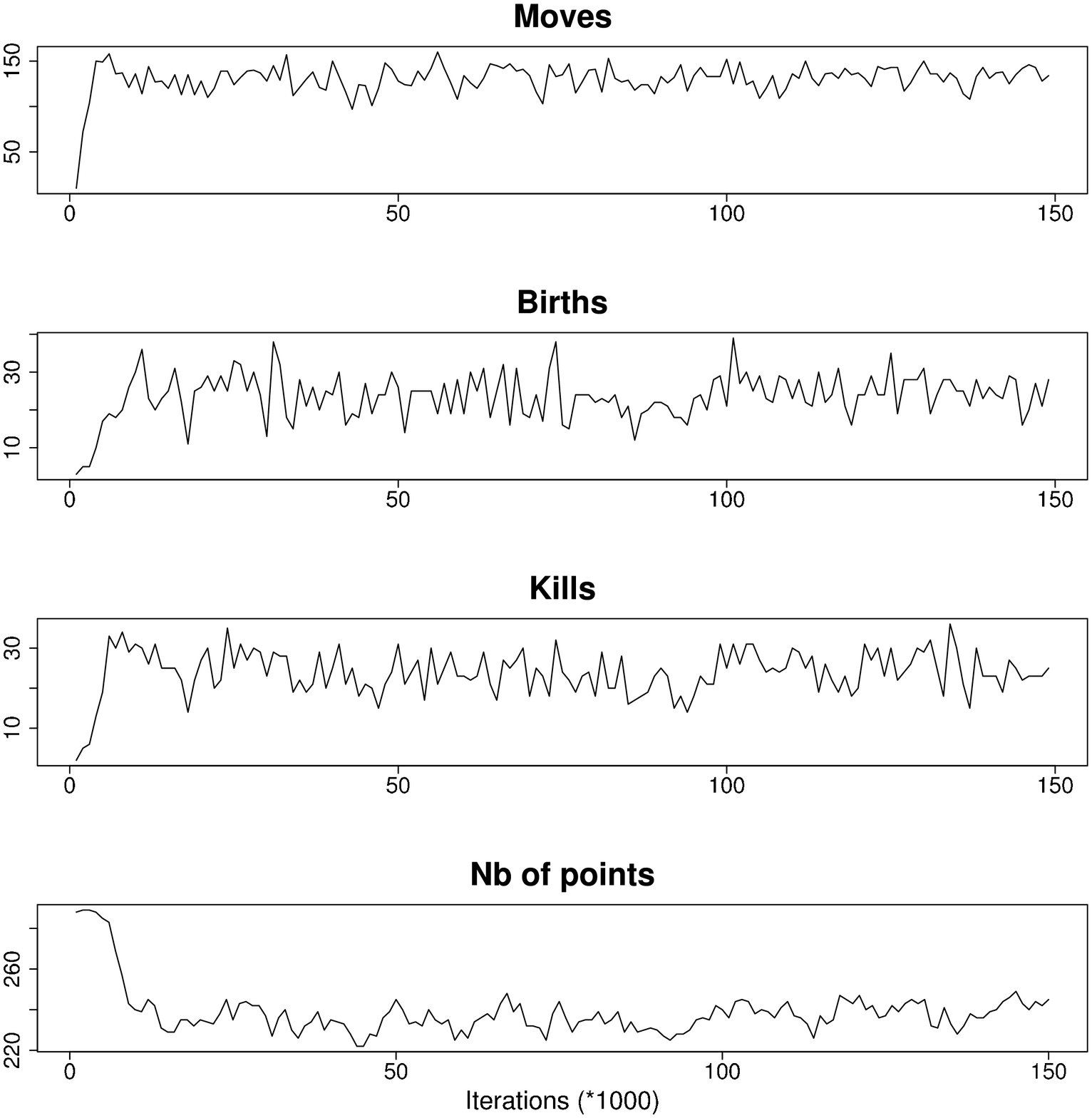}

  \caption{{\small Monitoring control for the simulations of Model 3 presented in Figure \ref{model3bis}, in the same order ($B=+\infty$ (left), $B=1$ (right), $\theta=-0.5$ (top), $\theta=0.5$ (bottom)). They consist in the same plots as for Figure \ref{model1}.}}\label{model3bis-monitoring}
  \end{center}
  \end{figure}

\newpage
\section{Estimation}\label{estimation}

In this section, we focus on stationary Gibbs tessellations, in particular we suppose that the underlying Poisson process is $\pi^z$, the stationary Poisson point Process with intensity $z\lambda$ where $\lambda$ is the Lebesgue measure. This is the case of Model 2 and 3 presented before. We apply our estimation procedure to the parameters $z$, $\epsilon$, $\alpha$, $B$ and $\theta$ involved in  these models. An interesting generalization would be to include the estimation of a non-stationary intensity (as in Model 1) to the estimation of the parameters of the interaction. We do not deal with this task in this paper.

Let us specify some notations. We assume that we observe a tessellation coming from a point configuration $\g$ on a window $\L_n=[-n,n]^2$. This tessellation is defined through an energy function $E_{\L}(\g_{\L},\g_{\L^c})$ as in (\ref{del}) or (\ref{vor}). In the following, we denote it by $E^{\beta, \theta}_\L(\g_\L,\g_{\L^c})$, since we assume a parametric form. We actually suppose that it depends on parameters $(\beta,\theta)$: $\beta$ is the hardcore parameter (parameterizing the finiteness of the energy function), while $\theta$ is the smooth interaction parameter. For instance, in Model 2: $\beta=(\epsilon, \alpha)$, $\theta=\theta$; in Model 3: $\beta=(\epsilon, \alpha, B)$, $\theta=\theta$. This distinction is  presented clearly in \ref{theorieestimation}, where some theoretical results for  the asymptotic consistency of our estimators are given.

We consider a classical two-step estimation procedure. We first estimate $\beta$,  then we estimate $\theta$ and $z$ by pseudo-likelihood where $\beta$ is replaced by its estimator. 

The choice of the pseudo-likelihood approach instead of the classical maximum likelihood estimator is mainly imposed by practical reasons. Indeed, maximum likelihood requires the estimation, by simulations, of an unknown normalizing constant. This approach demands to simulate several tessellations according to the model, which is extremely time-consuming in the situation when a hardcore interaction is involved (see previous section). Moreover, the pseudo-likelihood procedure has the advantage of being asymptotically consistent for a large class of models (see \cite{DL}), which has not been proved for the maximum likelihood estimator in such a general setting. However, when the hardcore interaction is not too strong, the maximum likelihood estimation may constitute a second step to refine a pseudo-likelihood approach.

There is a major difficulty to overcome in order to implement the pseudo-likelihood estimation in our case: the hardcore interactions are not necessarily hereditary. An interaction is hereditary if, for every forbidden point pattern $\g$, then, for every point $x$, the configuration $\g+x$ remains forbidden. This is equivalent to: for every allowed point configuration $\g$, then for every point $x\in\g$, the configuration $\g-x$ remains allowed. In other words, an interaction is hereditary if one can remove any point from $\g$. This property concerns only the hardcore interaction. So every interaction involving no hardcore part is necessarily hereditary. The models presented in Section \ref{troisexemples} are not hereditary. Indeed, if one removes a point from an allowed tessellation, the new tessellation may contain cells that are too large (for instance). As a consequence, we must modify the classical pseudo-likelihood contrast to take into account the so-called removable points 	as introduced in \cite{DL} (see Definition \ref{removable}).

\subsection{The two-step procedure}
The first step consists in estimating the hardcore parameter $\beta$. Let us first assume that $\beta$ is a one-dimensional parameter. We suppose the following inclusion 
\begin{equation}\label{inclusion1}
\textrm{if }\beta<\beta' \textrm{ then }\forall\L,\ E^{\beta, \theta}_\L(\g_\L,\g_{\L^c})<+\infty \Rightarrow E^{\beta', \theta}_\L(\g_\L,\g_{\L^c})<+\infty. 
\end{equation}
In this case, a consistent estimator of $\beta$ is 
\begin{equation}\label{esthardcore}
\hat \beta=\inf\{\beta>0,\ E^{\beta, \theta}_{\L_n}(\g_{\L_n},\g_{\L_n^c})<+\infty\}.\end{equation}
If instead of (\ref{inclusion1}), the converse implication holds, then it suffices to replace the infimum by a supremum in (\ref{esthardcore}).

In the case of a multi-dimensional hardcore parameter $\beta$, we estimate each of its components as above.

For instance, for Models 2 and 3 presented in Section \ref{troisexemples},  Property (\ref{inclusion1}) is satisfied by the hardcore parameters $\alpha$ and $B$, while the converse holds for  $\epsilon$. As a consequence, following (\ref{esthardcore}), natural estimators for these examples are (the notations are the same as in \ref{troisexemples}):
\begin{itemize}
\item  {\it For Model 2}: \begin{align*} \hat \epsilon&=min\{l(T),\ T\in \del_{\L_n}(\g)\},\\ \hat \alpha&=max\{R(T),\ T\in \del_{\L_n}(\g)\}.\end{align*}

\item  {\it For Model 3}: \begin{align*} \hat \epsilon&=min\{h_{\min}(C),\ C\in \vor_{\L_n}(\g)\}, \\ \hat \alpha&=max\{h_{\max}(C),\ C\in \vor_{\L_n}(\g)\}, \\ \hat B&=max\{h_{\max}^2(C)/\vol(C),\ C\in \vor_{\L_n}(\g)\}.\end{align*}

\end{itemize}

The second step consists in estimating the smooth interaction parameter $\theta$ and the intensity parameter $z$. We use the pseudo-likelihood procedure for the reasons explained before. To deal with the non-hereditary problem, we must introduce the concept of removable points.
\begin{definition}\label{removable}   
Let $\g$ be in $\Md$ and $x$ be a point of $\g$, then $x$ is removable from $\g$ if there exists  $\L\in\Bd$ such that $x\in\L$ and 
\begin{equation}    
E^{\beta, \theta}_\L(\g_\L-x,\g_{\L^c})<+\infty.
\end{equation}
\end{definition}

The following proposition, proved in \cite{DL}, gives a more intuitive approach and justifies the name of removable points.

\begin{proposition}\label{equivalence}
Let $\g$ be in $\Mdi$ and $x$ be a point of $\g$, then $x$ is removable from $\g$ if and only if $\g-x$ is in $\Mdi$. 
\end{proposition}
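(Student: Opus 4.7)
The plan is to prove the two implications separately, relying on the fact that the sums in (\ref{del}) and (\ref{vor}) are locally finite and that removing a single point from $\g$ perturbs only a bounded number of Delaunay triangles (resp.\ Voronoi cells), all located near $x$.

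For the easy direction ($\Leftarrow$), assuming $\g-x\in\Mdi$, I would take any bounded Borel set $\L$ containing $x$ (e.g.\ a small open ball about $x$). Since $x\in\L$ we have $(\g-x)_\L=\g_\L-x$ and $(\g-x)_{\L^c}=\g_{\L^c}$, so the defining property of $\Mdi$ immediately yields $E^{\beta,\theta}_\L(\g_\L-x,\g_{\L^c})<+\infty$, which matches Definition \ref{removable}.

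The converse ($\Rightarrow$) is the substantial direction. Since $V_1$ and $V_2$ take values in $\R\cup\{+\infty\}$, the hypothesis $E^{\beta,\theta}_{\L_0}(\g_{\L_0}-x,\g_{\L_0^c})<+\infty$ forces every individual contribution $V_1(T)$ (resp.\ $V_2(T,T')$) indexed by a cell or neighbouring pair of $\del(\g-x)$ (or $\vor(\g-x)$) inside $\L_0$ to be finite. The key geometric observation is that every triangle $T\in\del(\g-x)\setminus\del(\g)$ has its open circumscribed ball containing $x$: otherwise $T$ would already satisfy the Delaunay condition in $\g$. Consequently, reinserting $x$ inside $\L_0$ (legitimate because $x\in\L_0$) destroys the Delaunay property of $T$, which means that $T$ is inside $\L_0$ in the sense of paragraph \ref{vordel}. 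The same reinsertion argument handles the affected Voronoi cells and the pair-interaction terms they enter.

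With these ingredients, fix any $\L\in\Bd$ and classify each cell $T\in\del(\g-x)$ inside $\L$: either $T\in\del(\g)$, and then $V_1(T)<+\infty$ follows from $\g\in\Mdi$ applied to a window containing one vertex of $T$; or $T\notin\del(\g)$, in which case the circumcircle observation places $T$ inside $\L_0$, so the removable hypothesis yields $V_1(T)<+\infty$. The same dichotomy handles the $V_2(T,T')$ terms, and the local finiteness of the tessellations bounds the number of summands in $E^{\beta,\theta}_\L((\g-x)_\L,(\g-x)_{\L^c})$, so this energy is finite; hence $\g-x\in\Mdi$. The main obstacle is the geometric step, where one must carefully distinguish ``inside $\L_0$ for the configuration $\g$'' from ``inside $\L_0$ for the configuration $\g-x$'', and verify that the reinsertion-of-$x$ argument applies uniformly to the Delaunay cells, to the Voronoi cells, and to the pair interactions involving them.
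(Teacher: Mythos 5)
Your argument is correct. The paper gives no proof of this proposition (it defers to \cite{DL}), and your two-sided argument — the easy direction by specializing the defining property of $\Mdi$ to a window containing $x$, and the converse by noting that every cell of $\del(\g-x)$ or $\vor(\g-x)$ is either already a cell of $\del(\g)$ or $\vor(\g)$ (so its $V_1$, $V_2$ contributions are finite because $\g\in\Mdi$, using a window containing one of its vertices) or is destroyed by re-inserting $x$ and is therefore \emph{inside} the window $\L_0$ of Definition \ref{removable} (so its contributions are controlled by the finite energy $E^{\beta,\theta}_{\L_0}(\g_{\L_0}-x,\g_{\L_0^c})$) — is precisely the standard proof of this equivalence, and the finitely many summands with values in $\R\cup\{+\infty\}$ make the term-by-term reduction legitimate as you claim.
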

From the definition, it is clear that the property of being a removable point from $\g$ depends only on the hardcore parameter $\beta$ and not on $\theta$ or $z$. Thus, we denote by $\rem^{\beta} (\g)$ the set of removable points in $\g$.

The more rigid the tessellation, the less removable points there are. In particular, if there is no hardcore part in the interaction function, every point of $\g$ is removable. In Figure \ref{figrem}, the removable points of previous simulations are encircled.

  \begin{figure}[htbp]
  \begin{center}
  \hspace{0cm} \includegraphics[angle=0,scale=.233]{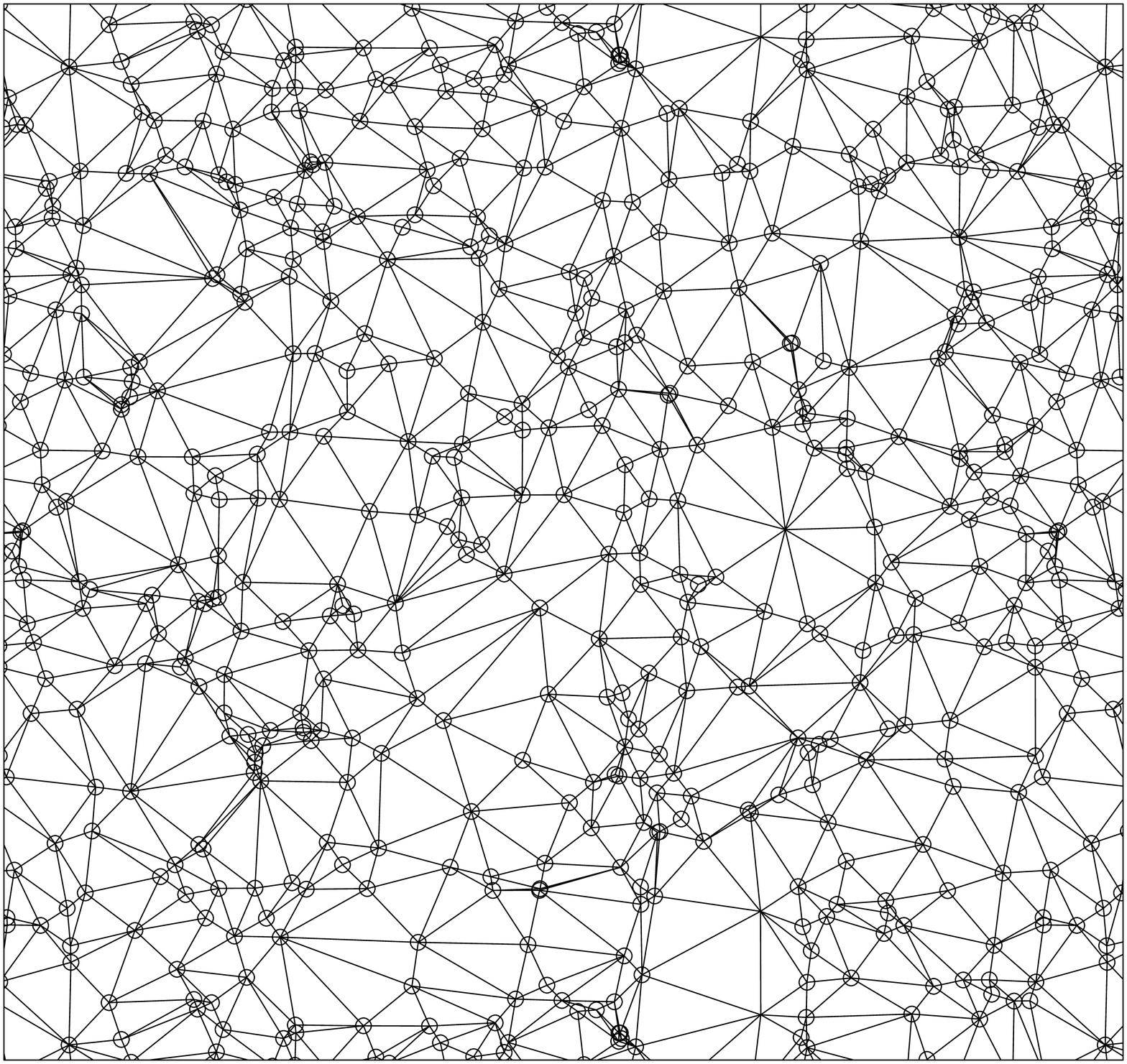}
  \hspace{0.4cm} \includegraphics[angle=0,scale=.233]{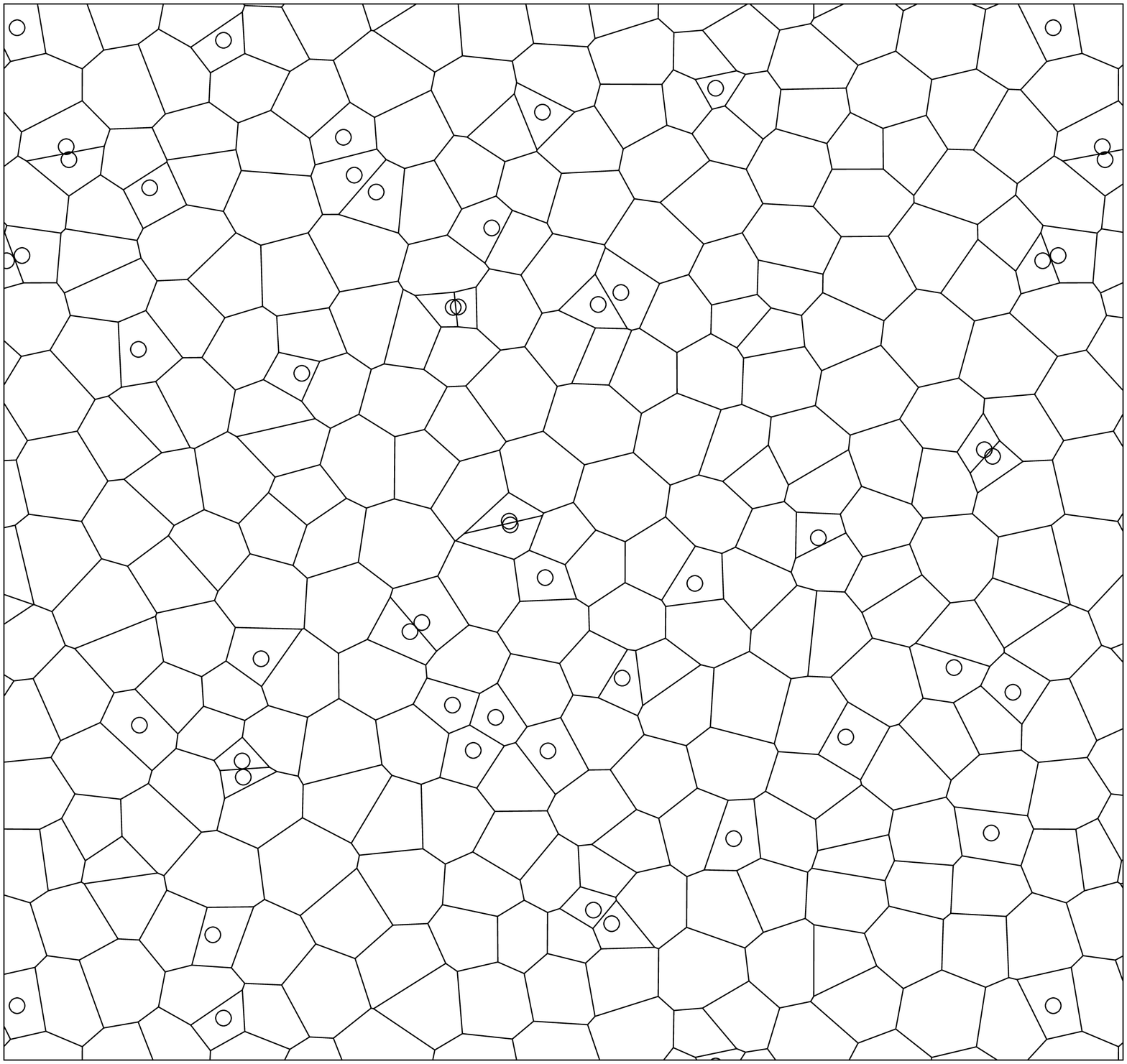}
  \caption{{\small Removable points (encircled) from: The Delaunay tessellation simulated in Figure \ref{model2} where $\theta=5$ (left);  The Voronoi tessellation simulated in Figure \ref{model3} where $\theta=-0.5$ (right).}}\label{figrem}
  \end{center}
  \end{figure}

We are now in position to introduce the pseudo-likelihood contrast function, adapted to the non-hereditary case:

\begin{equation*}
PLL_{\L_n}(\gamma,z,\beta, \theta)= \\
\int_{\Lambda_n}z \exp\left(-h^{\beta,\theta}(x,\gamma)\right)dx + \sum_{x\in\rem^{\beta}(\g)\cap\L_n}\big(h^{\beta, \theta}(x,\gamma-x)-\ln(z)\big),
\end{equation*}
where $h^{\beta, \theta}(x,\gamma-x)$ is the local energy of $x$ in $\g$, defined for every $x\in\rem^{\beta}(\g)$ by 
\begin{equation}\label{defh}
h^{\beta, \theta}(x,\gamma-x)=E^{\beta, \theta}_\L(\g_\L,\g_{\L^c})-E^{\beta, \theta}_\L(\g_\L-x,\g_{\L^c}),
\end{equation}
where $\L$ is a set containing $x$ as in Definition \ref{removable}. Let us point out that $h^{\beta, \theta}(x,\gamma)$ is just equal to $h^{\beta, \theta}(x,(\gamma+x) - x)$ and is always well-defined for $\g$ in $\Mdi$.

The parameters $\theta$ and $z$ are estimated by minimizing $PLL_{\L_n}$, where the hardcore parameter $\beta$ is replaced by its estimator $\hat \beta$ obtained in the first step:
\begin{equation} \label{estsmooth}
(\hat z,\ \hat \theta)=argmin_{z,\theta} PLL_{\L_n}(\gamma,z,\hat \beta, \theta).
\end{equation}

The consistency of this estimation procedure is considered in \ref{theorieestimation}.

\subsection{Practical implementation}

The optimization of  $PLL_{\L_n}$ requires the calculus of the local energy $h^{\beta, \theta}(x,\gamma)$ for any $x\in\Lambda_n$. This is the same calculus as the one needed in step 3 of the algorithm presented in Section \ref{algorithm} and, as explained there, it can be achieved by focusing on a window around $x$. Moreover, this computation requires the knowledge of $\g_{\L^c}$, the configuration outside this window. To prevent boundary problems, it is actually necessary to compute $PLL$ on a sub-window of the initial observation window $\L_n$. We denote abusively $\L_n$ this sub-window in the following.

The derivative of  $PLL_{\L_n}$ with respect to $z$ yields the following estimator for $z$:
\begin{equation}\label{estz}
\hat z = \frac{\int_{\Lambda_n} \exp\left(-h^{\hat \beta,\theta}(x,\gamma)\right)dx}{card(\rem^{\hat \beta}(\g)\cap\L_n)},
\end{equation}
where $card(\rem^{\hat \beta}(\g)\cap\L_n)$ is the number of removable points from the observed point pattern $\g$ in $\L_n$.

In the simple case where $\theta$ is a one-dimensional parameter and  $h^{\beta, \theta}(x,\gamma)$ is a sufficiently regular function, the minimization of   $PLL_{\L_n}$ in $\theta$ can be reduced to the determination of the root of an equation. Indeed, from the derivative of $PLL_{\L_n}$ with respect to $\theta$, we obtain in this case that $\hat \theta$ is the solution of
\begin{equation}\label{esttheta}
z \int_{\Lambda'_n} \frac{\partial h^{\hat \beta,\theta}}{\partial \theta} (x,\gamma) \exp\left(-h^{\hat \beta,\theta}(x,\gamma)\right)dx = \sum_{x\in\rem^{\hat \beta}(\g)\cap\L_n} \frac{\partial  h^{\hat \beta, \theta}}{\partial \theta}(x,\gamma-x),
\end{equation}
where $\L'_n = \{x\in\L_n,\  \g+x\in\Mdi\}$.

Moreover, when $h^{\beta, \theta}(x,\gamma)$ depends linearly on $\theta$ (as in Models 2 and 3), for all $x$ such that $\g+x\in\Mdi$, $$\frac{\partial h^{\hat \beta,\theta}}{\partial \theta} (x,\gamma)=\theta h^{\hat \beta,1}(x,\g),$$
which simplifies equation (\ref{esttheta}) above.

From a practical point of view, we first estimate $\theta$ thanks to (\ref{esttheta}), where $z$ is replaced by $\hat z$ given by (\ref{estz}). Then we deduce $\hat z$ by plugging $\hat \theta$ into (\ref{estz}).
In both these estimations, the involved integrals are approximated by Monte Carlo (this is the most time-consuming step of the estimation procedure).

\subsection{Some examples}

\subsubsection{For Model 2}

We implement the estimation procedure on simulations of Model 2. We do not introduce the hardcore parameter $\epsilon$ here. The estimation of $\alpha$, $\theta$ and $z$ has been done from 200 replications of Model 2 when $\alpha=0.08$, $z=1000$ and $\theta=\pm 5$, simulated as in Section \ref{algorithm}. The results are shown in Figure \ref{estmodel2-theta_5} and \ref{estmodel2-theta5}. We have distinguished two cases: first estimating $\theta$ by supposing $z=1000$ known, then estimating both $\theta$ and $z$. In this last case, one can note in the bottom right plot of these figures the closed relation between $\hat z$ and $\hat \theta$. Although the models that we consider are well identifiable, it is not surprising to observe this closed relation: it is implied  by the Euler's formula, which connects linearly the number of cells and the number of vertices in a tessellation (see 3.2.11 in \cite{M}). Therefore, if $z$ is under-estimated, $\theta$ will tend to be under-estimated as well, in order to respect this linear relation.

When $\theta=-5$ (Figure \ref{estmodel2-theta_5}), the simulated tessellations rely on about 1500 points and all of them are removable. The estimation of $\alpha$ when $\theta=-5$ actually shows that this hardcore parameter is useless in this case: the cells of the tessellation naturally satisfy the hardcore condition. It is interesting to note that this misspecification does not affect the estimation of the smooth interaction parameter $\theta$. The average of $\hat \theta$ is about $-5$, while its standard deviation is $0.4$ when $z$ is known and $1.6$ when $z$ is estimated. The average of $\hat z$ is $1002$ and its standard deviation $145$. 

  \begin{figure}[htbp]
    \setlength{\tabcolsep}{0.1cm} \centerline{
  \begin{tabular}[]{ccc}
  \includegraphics[angle=0,scale=.16]{Figures/perim-theta_5} &
  \includegraphics[angle=0,scale=.22]{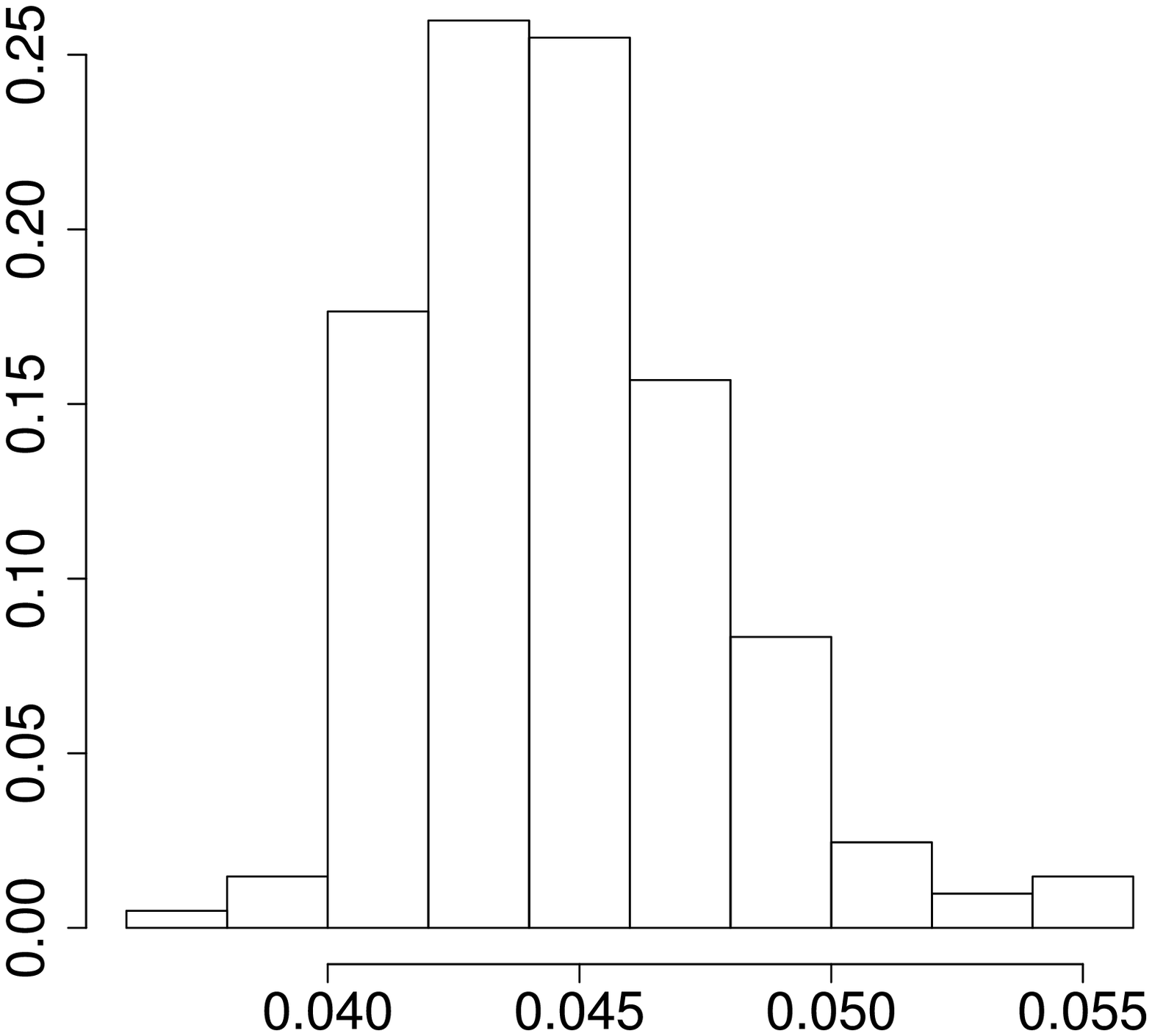} &
  \includegraphics[angle=0,scale=.22]{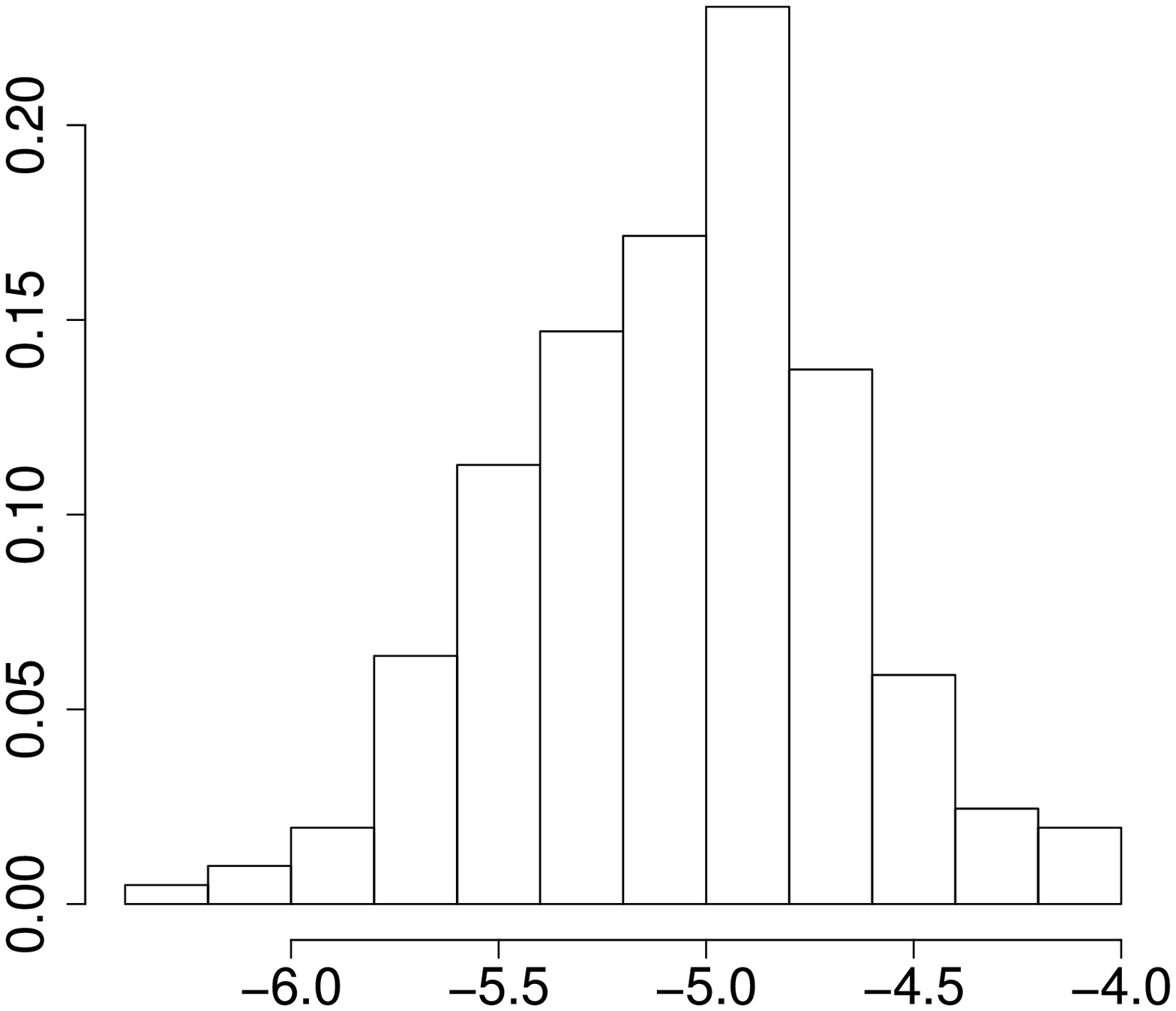} \\
  \footnotesize {Example of tessellation} &
  \footnotesize {$\hat \alpha$} & \footnotesize {$\hat \theta$ when $z$ is known}\\
  \includegraphics[angle=0,scale=.22]{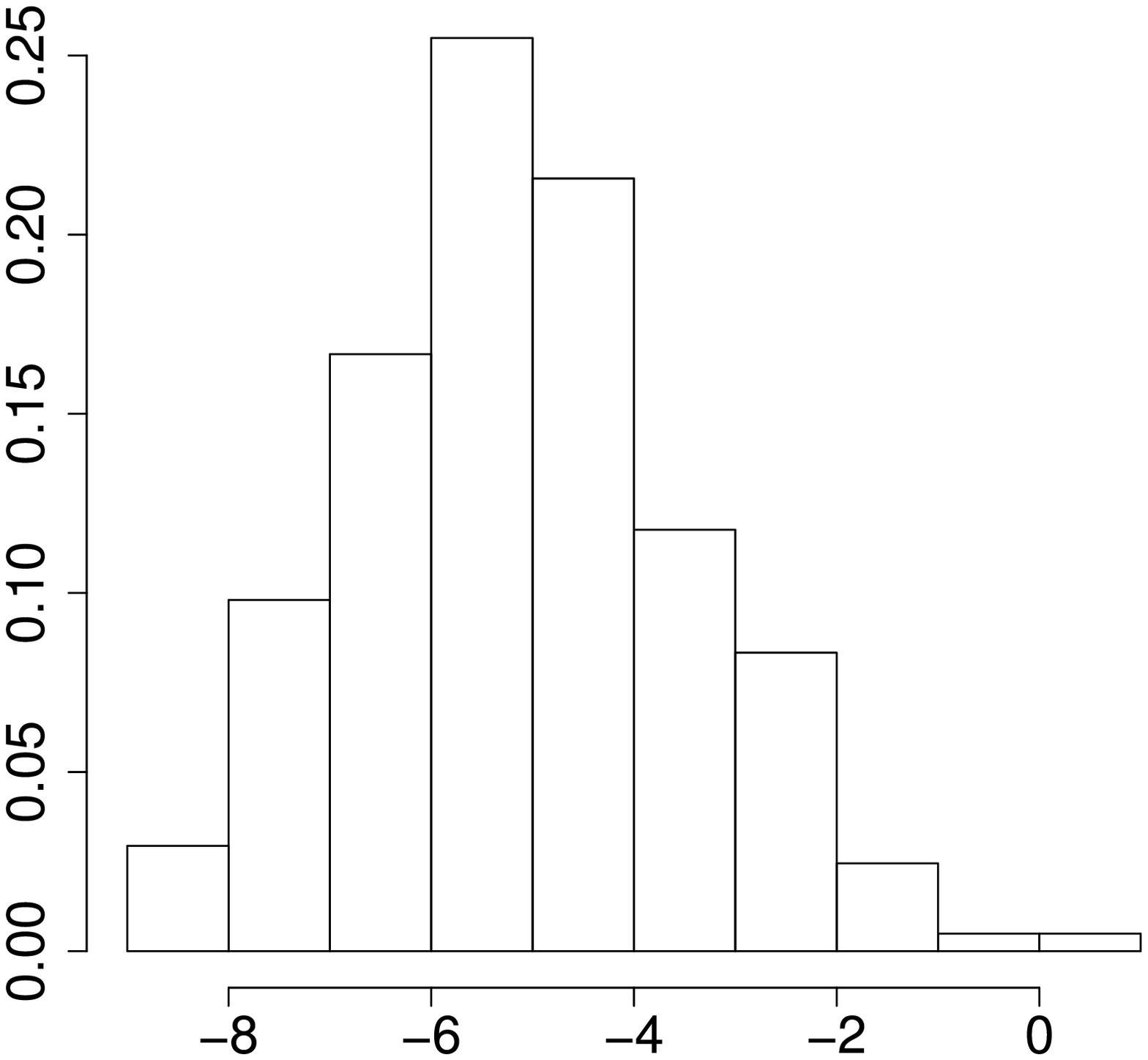}&
   \includegraphics[angle=0,scale=.22]{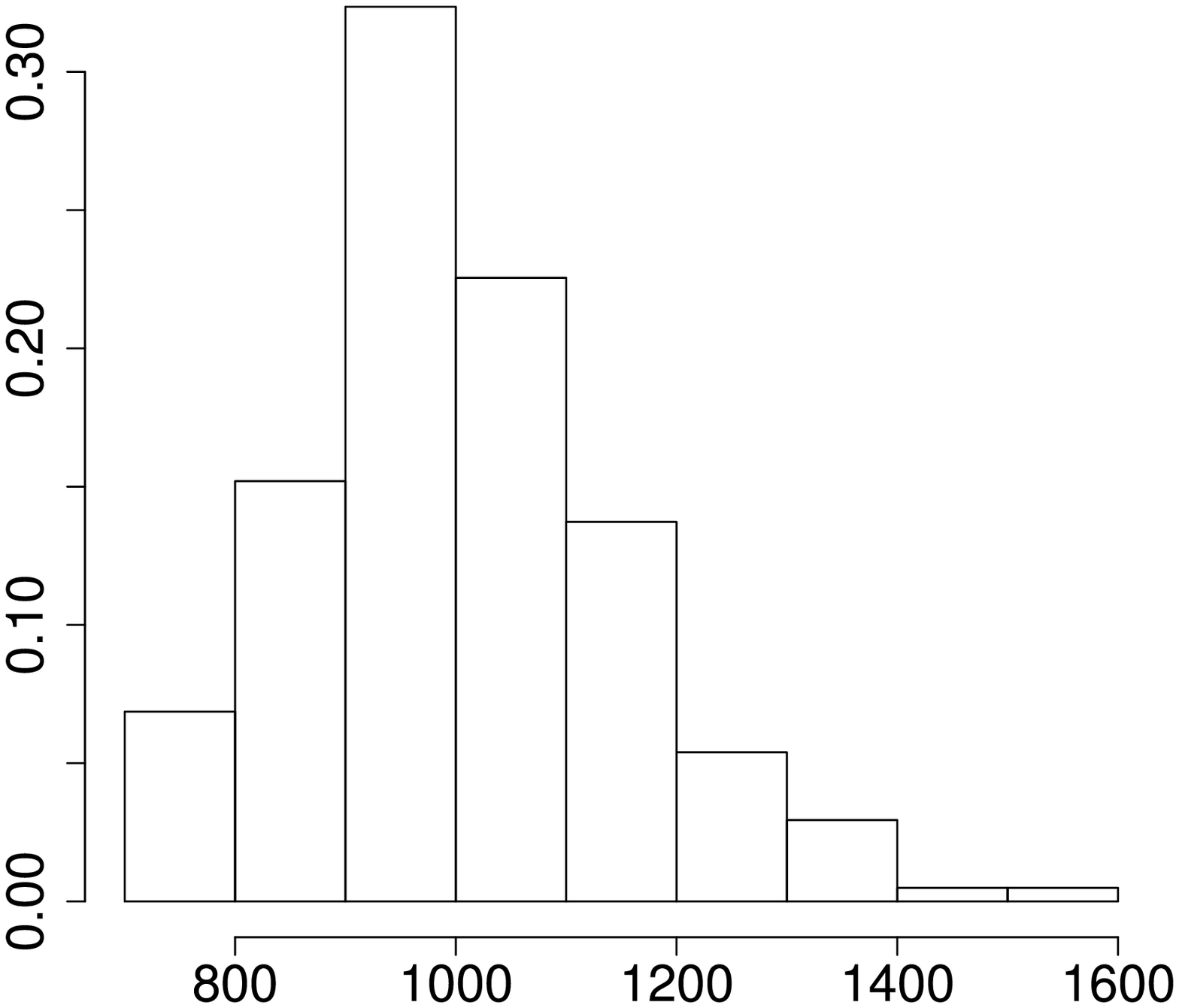}&
   \includegraphics[angle=0,scale=.22]{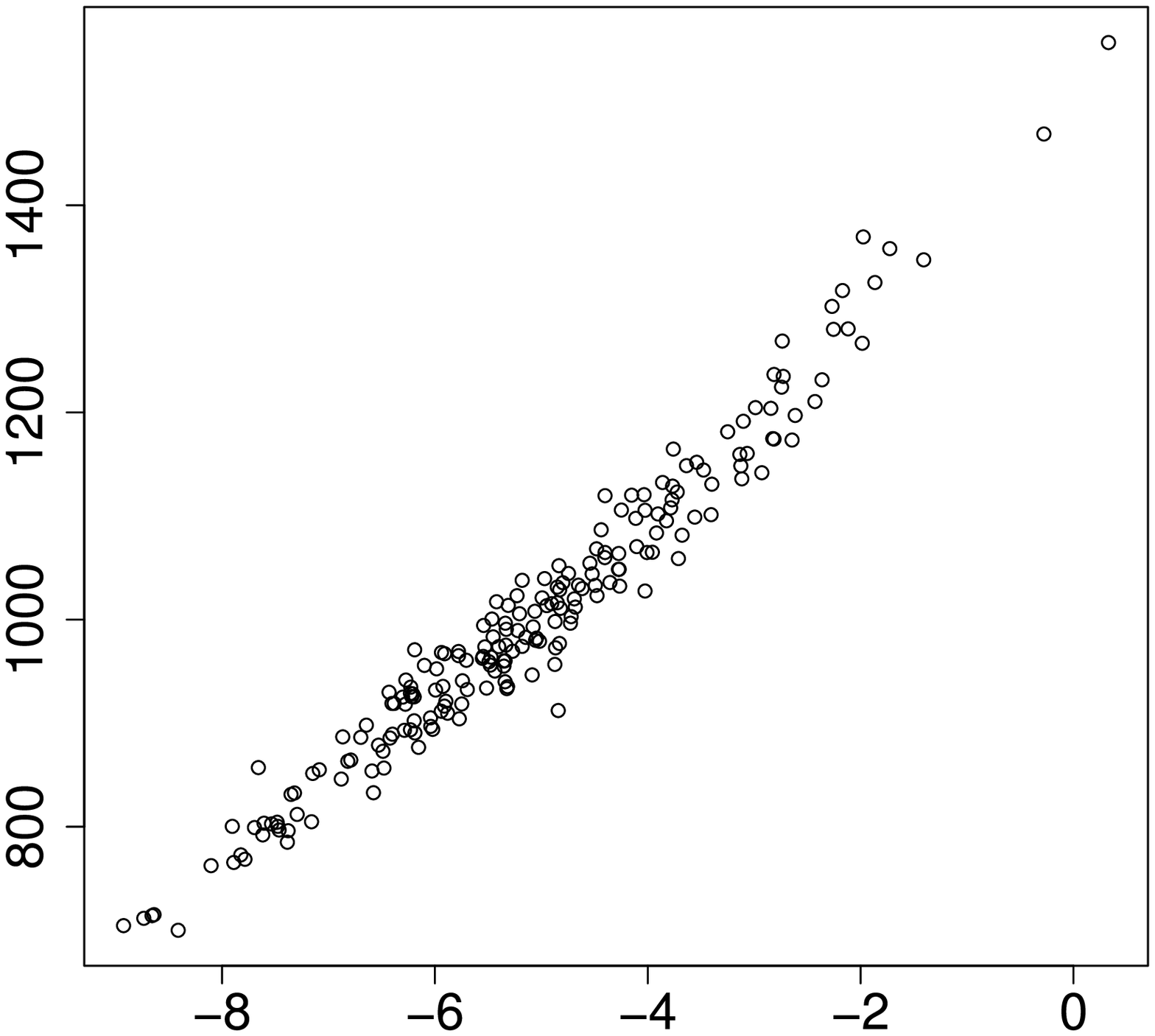}\\
  \footnotesize {$\hat \theta$ when $z$ is estimated} & \footnotesize {$\hat z$} & 
  \footnotesize {Scatterplot of $(\hat\theta,\ \hat z)$} 
  \end{tabular}
  }
     \caption{{\small Estimation of Model 2 when $\alpha=0.08$, $\theta=-5$, $z=1000$, from 200 replications.}}\label{estmodel2-theta_5}

  \end{figure}

  \begin{figure}[htbp]
    \setlength{\tabcolsep}{0.1cm} \centerline{
  \begin{tabular}[]{ccc}
  \includegraphics[angle=0,scale=.16]{Figures/perim-theta5} &
  \includegraphics[angle=0,scale=.22]{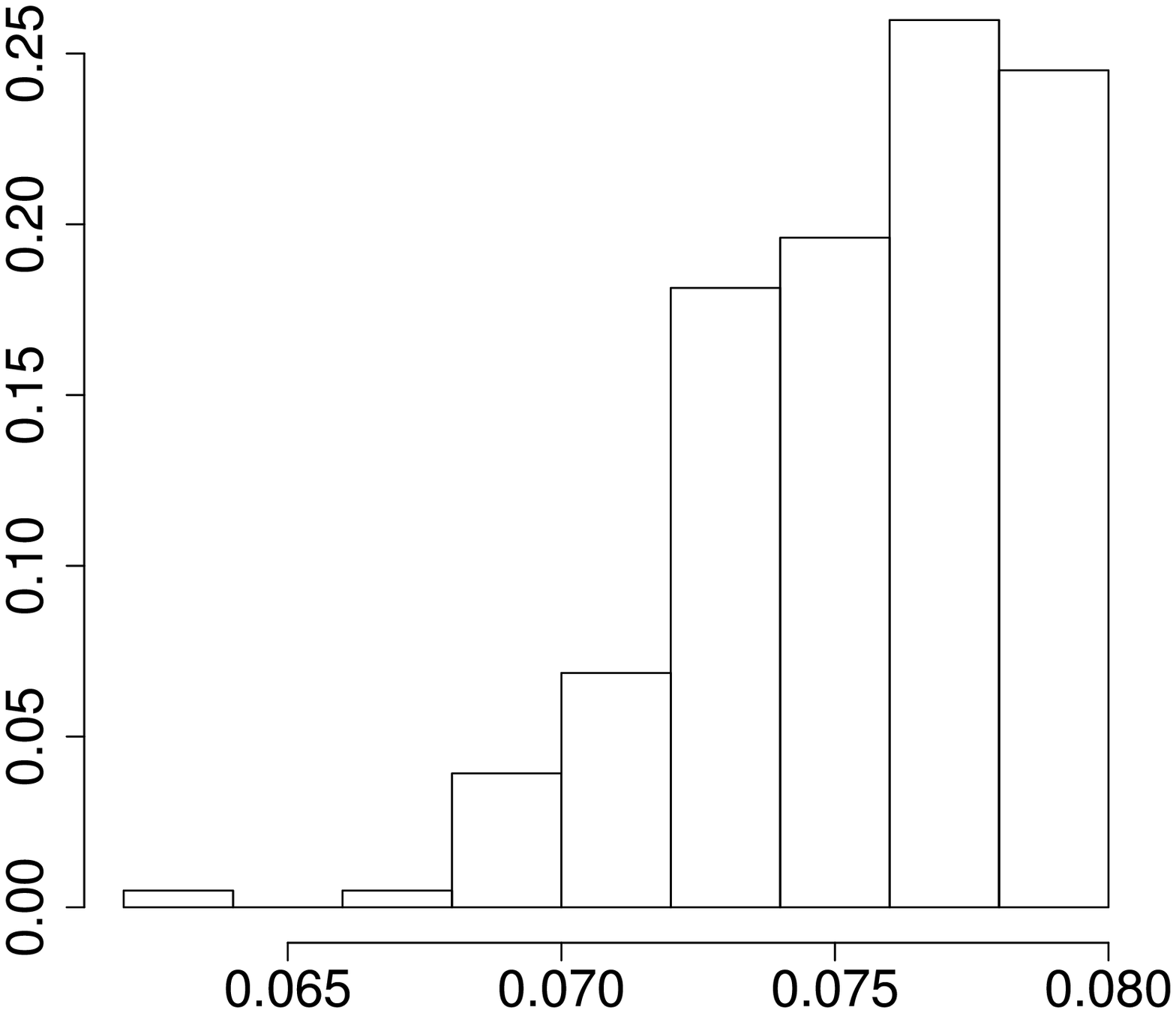} &
  \includegraphics[angle=0,scale=.22]{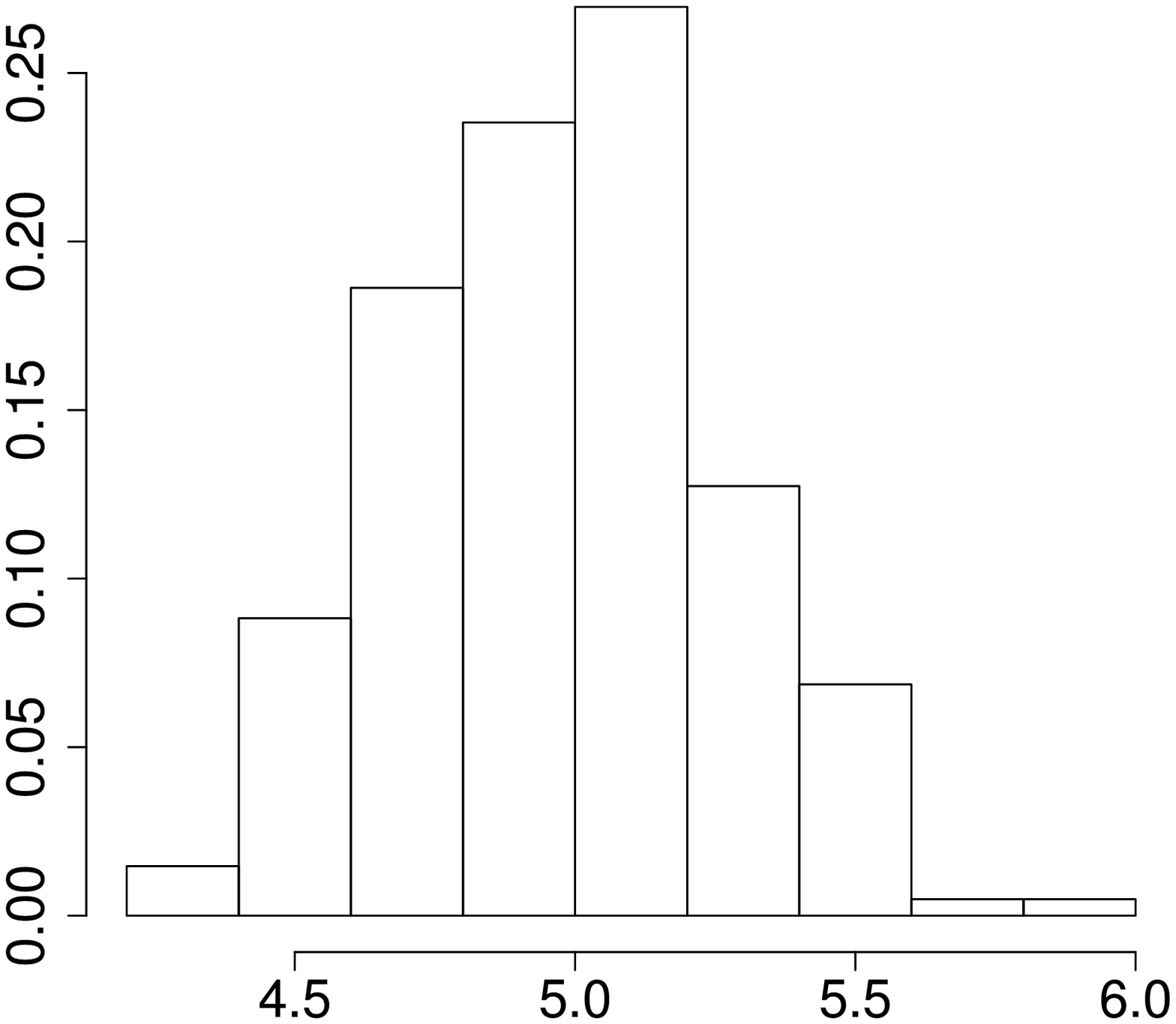} \\
  \footnotesize {Example of tessellation} &
  \footnotesize {$\hat \alpha$} & \footnotesize {$\hat \theta$ when $z$ is known}\\
  \includegraphics[angle=0,scale=.22]{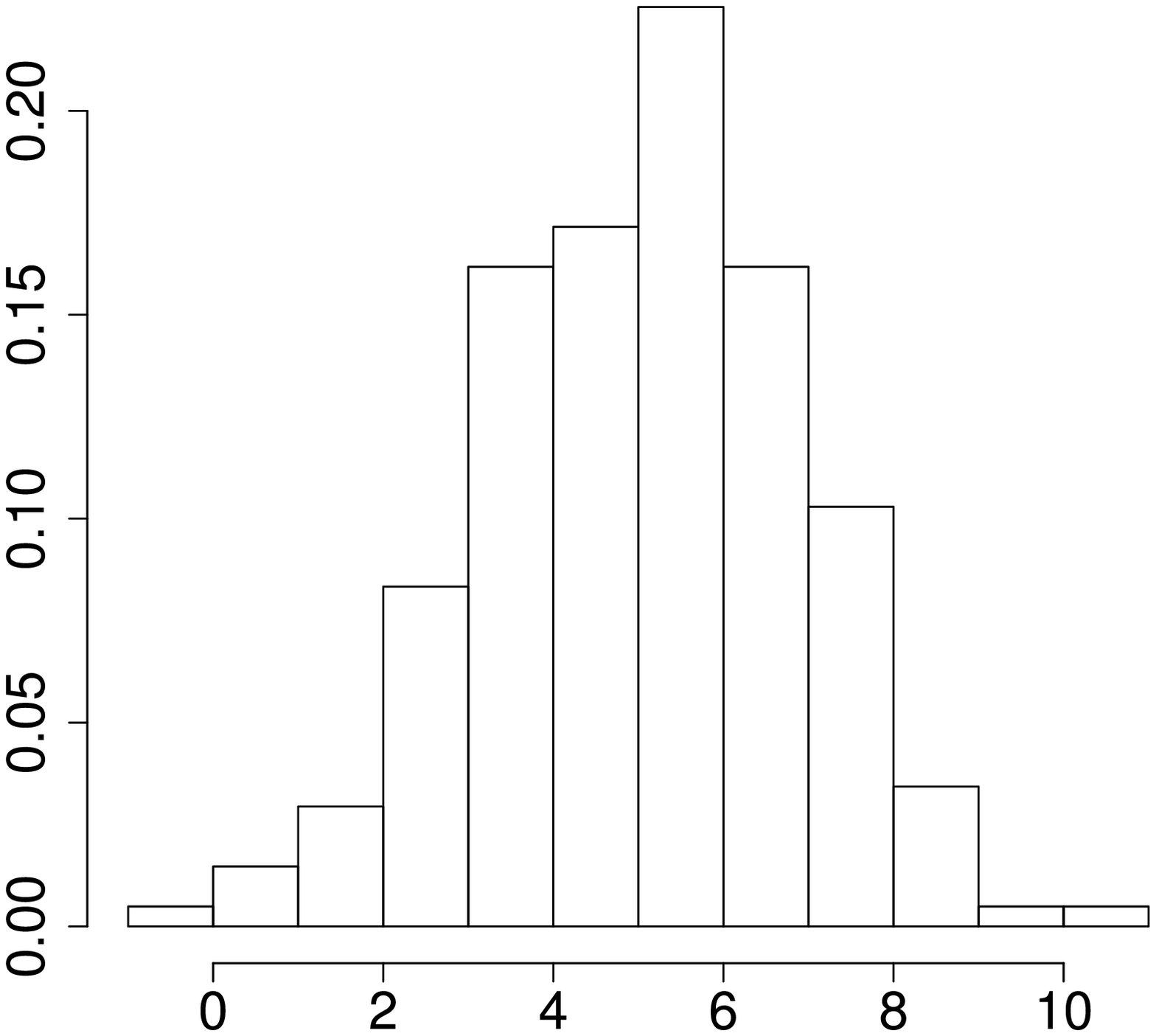}&
   \includegraphics[angle=0,scale=.22]{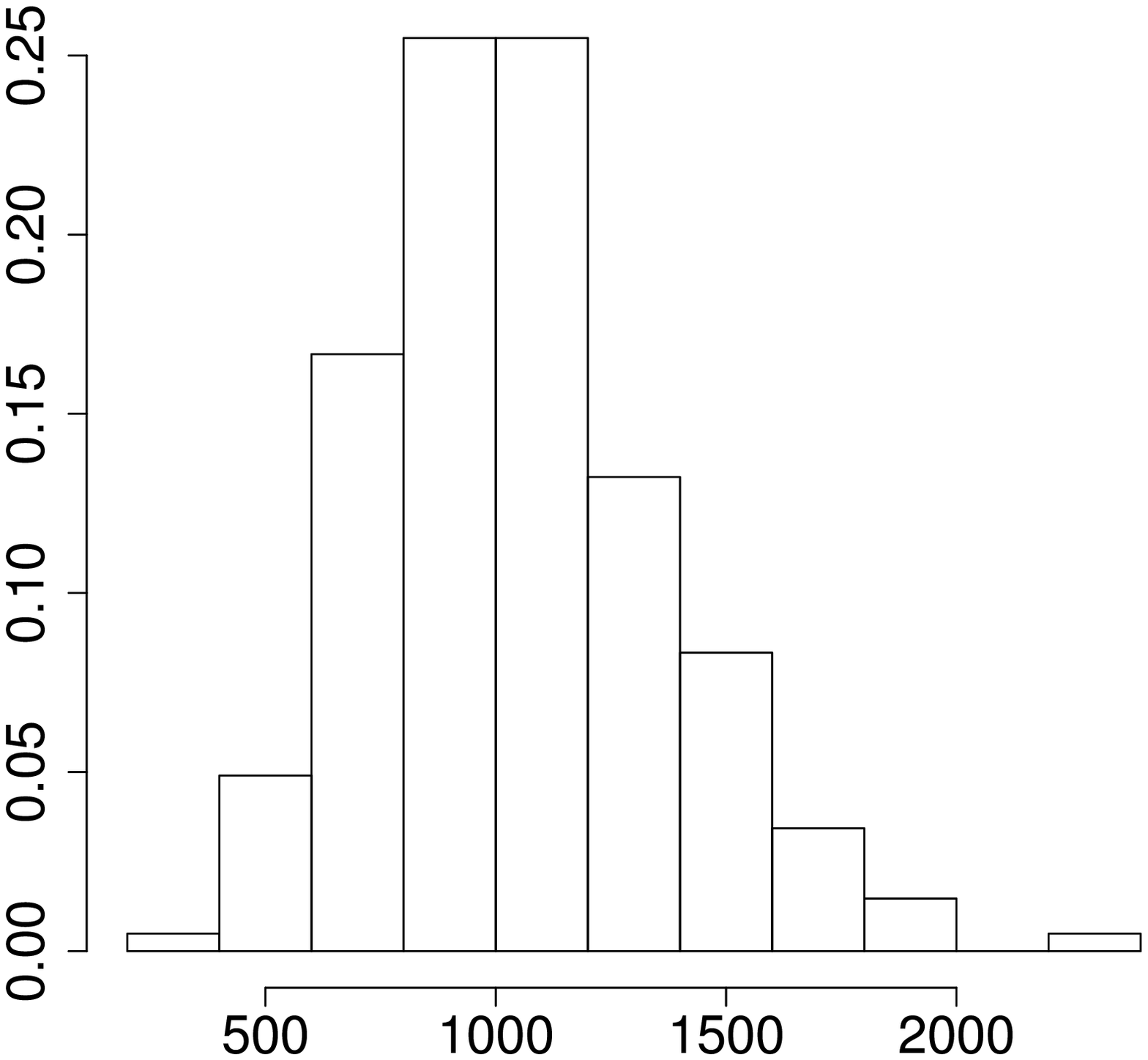}&
   \includegraphics[angle=0,scale=.22]{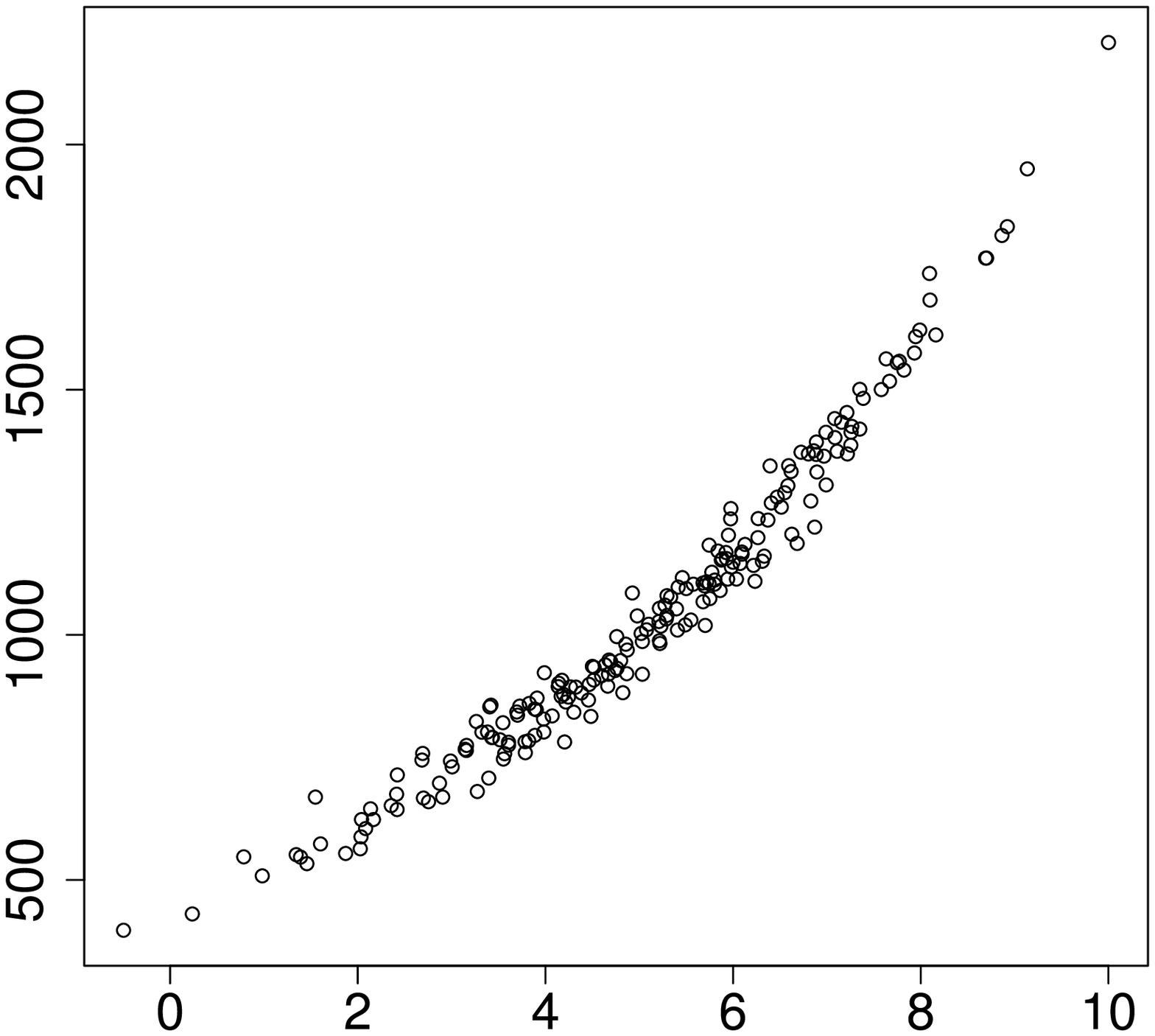}\\
  \footnotesize {$\hat \theta$ when $z$ is estimated} & \footnotesize {$\hat z$} & 
  \footnotesize {Scatterplot of $(\hat\theta,\ \hat z)$} 
  \end{tabular}
  }
      \caption{{\small Estimation of Model 2 when $\alpha=0.08$, $\theta=5$, $z=1000$, from 200 replications.}}\label{estmodel2-theta5}

  \end{figure}

When $\theta=5$ (Figure \ref{estmodel2-theta5}), the hardcore plays an important role in the model. It is well estimated with a standard deviation of $3.10^{-2}$. The standard deviation of $\hat \theta$ is $0.3$ when $z$ is known and $1.9$ when $z$ is estimated. The average of $\hat z$ is $1049$ and its standard deviation $313$. These estimations seem less accurate than when $\theta=-5$. This certainly comes from the fact that, when $\theta=5$, our simulated tessellations on $[0,1]\times[0,1]$ rely only on $500$ points. Most of these points are removable (more than $90\%$), as showed in the left example of Figure \ref{figrem}.

\subsubsection{For Model 3}

Two hundred replications of Model 3 where $\alpha=0.05$, $B=0.625$, $z=100$ and $\theta=\pm 0.5$ have been simulated according to the algorithm presented in Section \ref{algorithm} (see Figure \ref{model3} for an example). As above, the hardcore parameter $\epsilon$ was not introduced here. The results of the estimations are shown in Figure \ref{estmodel3-theta_5} when $\theta=-0.5$ and in Figure \ref{estmodel3-theta5} when $\theta=0.5$. Two situations are considered, assuming $z=100$ is known or not. The particularity of these simulated Voronoi tessellations is their rigidity. The hardcore interactions are strong, forcing the cells not to be too large (through $\alpha$) neither too flat (through $B$). This is confirmed by the accuracy of their estimation in both cases $\theta=\pm 0.5$ (see the histograms in Figures \ref{estmodel3-theta_5} and \ref{estmodel3-theta5}). But, as a consequence, there are only a few removable points, making the estimation of the smooth interaction parameters more difficult. Yet, it appears from these simulations that, in spite of the apparent similarity of the tessellations when $\theta=-0.5$ and $\theta=0.5$ (see Figure \ref{model3}) and in spite of the few number of removable points, the estimation procedure is mostly available to properly distinguish them.

When $\theta=-0.5$ (Figure  \ref{estmodel3-theta_5}), there are in average $45$ removable points on $265$ points. The estimation of $\theta$ remains correct: the average and the standard deviation of $\hat \theta$ are respectively $-0.52$ and $6.4$ {\small $10^{-2}$} when $z$ is known, and $-0.56$ and $14.5$ {\small $10^{-2}$} when $z$ is estimated. The average of $\hat z$ is $94$ while its standard deviation is $45$.

  \begin{figure}[h]
    \setlength{\tabcolsep}{0.1cm} \centerline{
  \begin{tabular}[]{ccc}
  \includegraphics[angle=0,scale=.22]{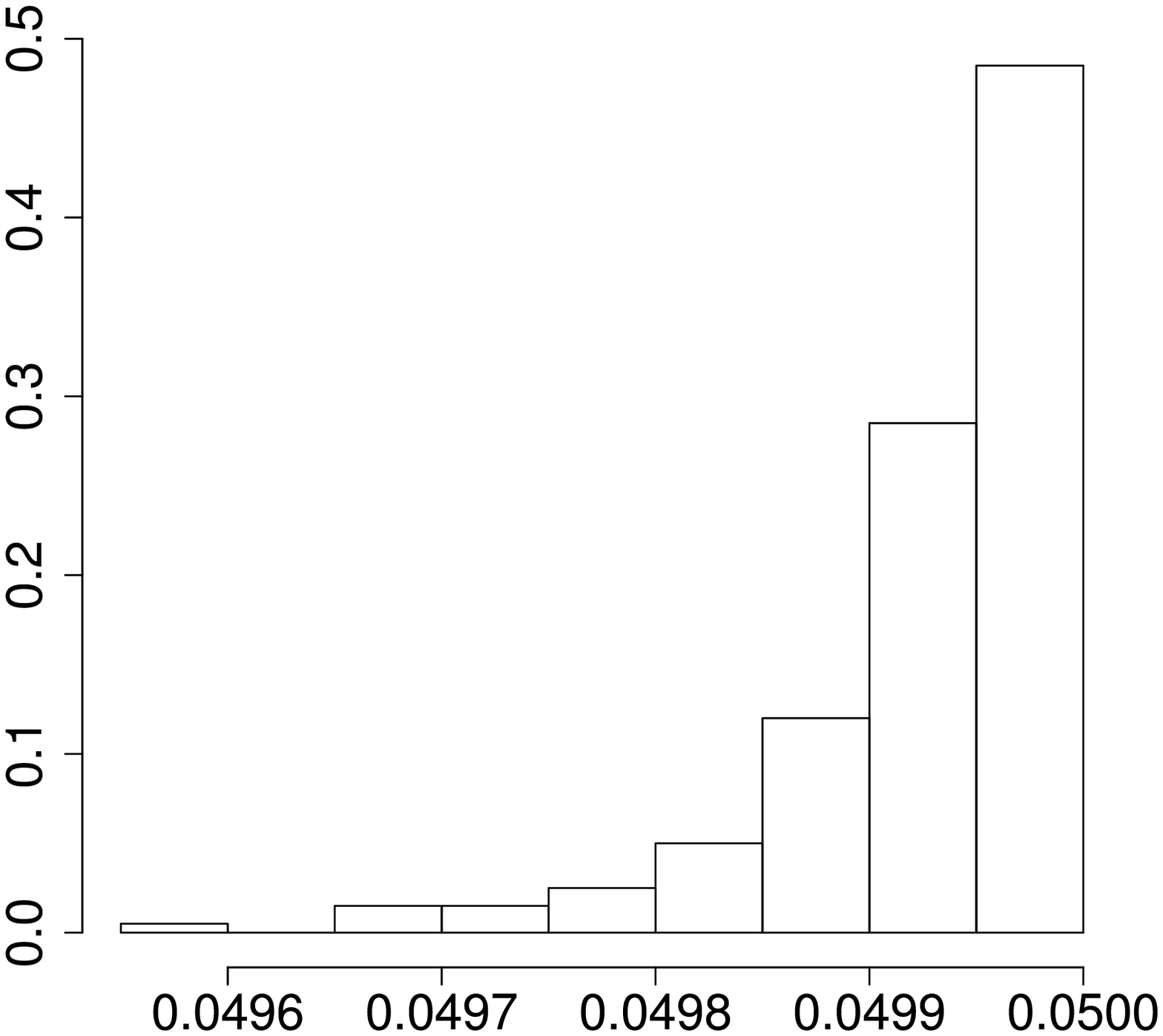} &
  \includegraphics[angle=0,scale=.22]{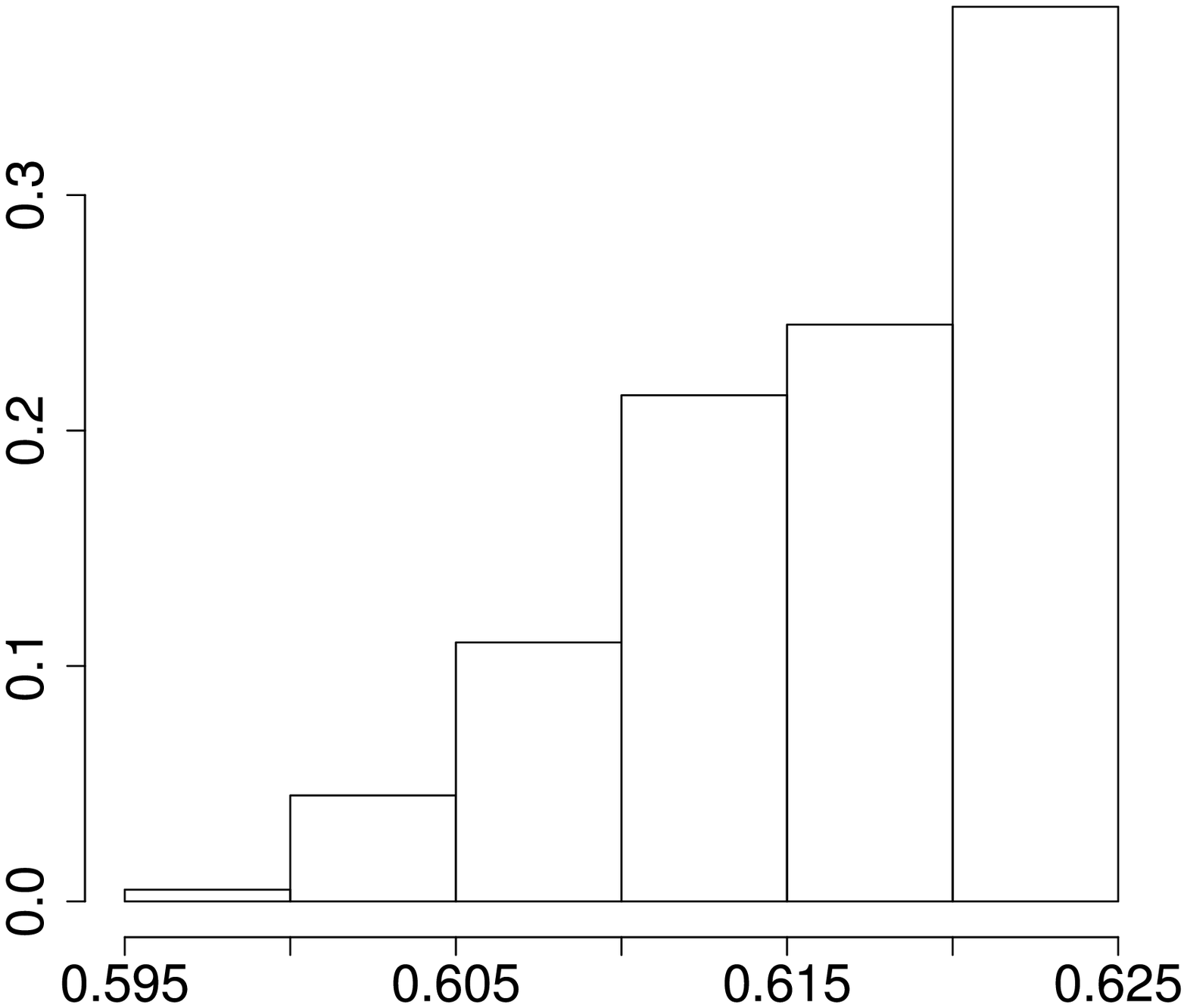} &
  \includegraphics[angle=0,scale=.22]{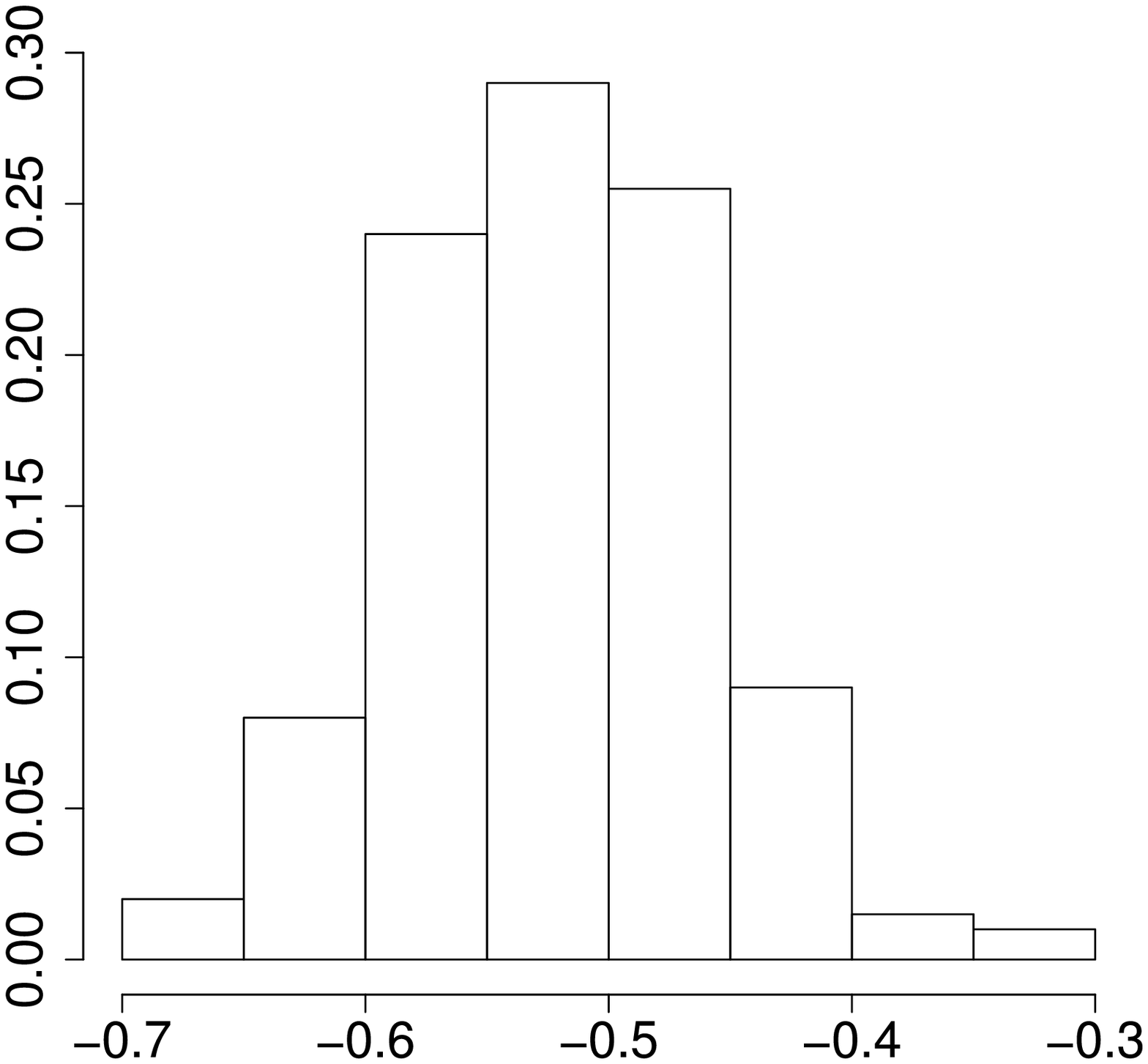} \\
  \footnotesize {$\hat \alpha$} &
  \footnotesize {$\hat B$} & \footnotesize {$\hat \theta$ when $z$ is known}\\
  \includegraphics[angle=0,scale=.22]{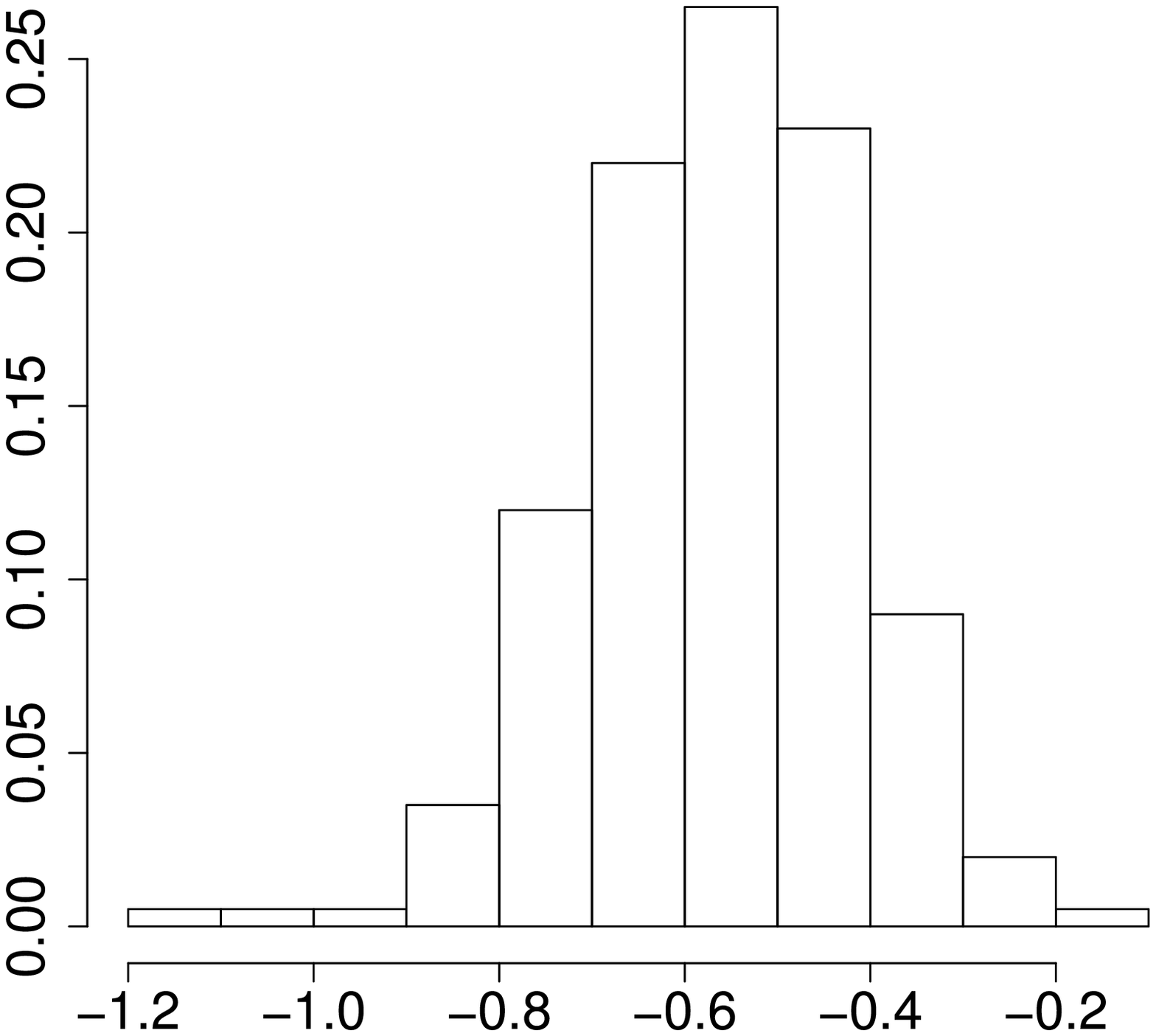}&
   \includegraphics[angle=0,scale=.22]{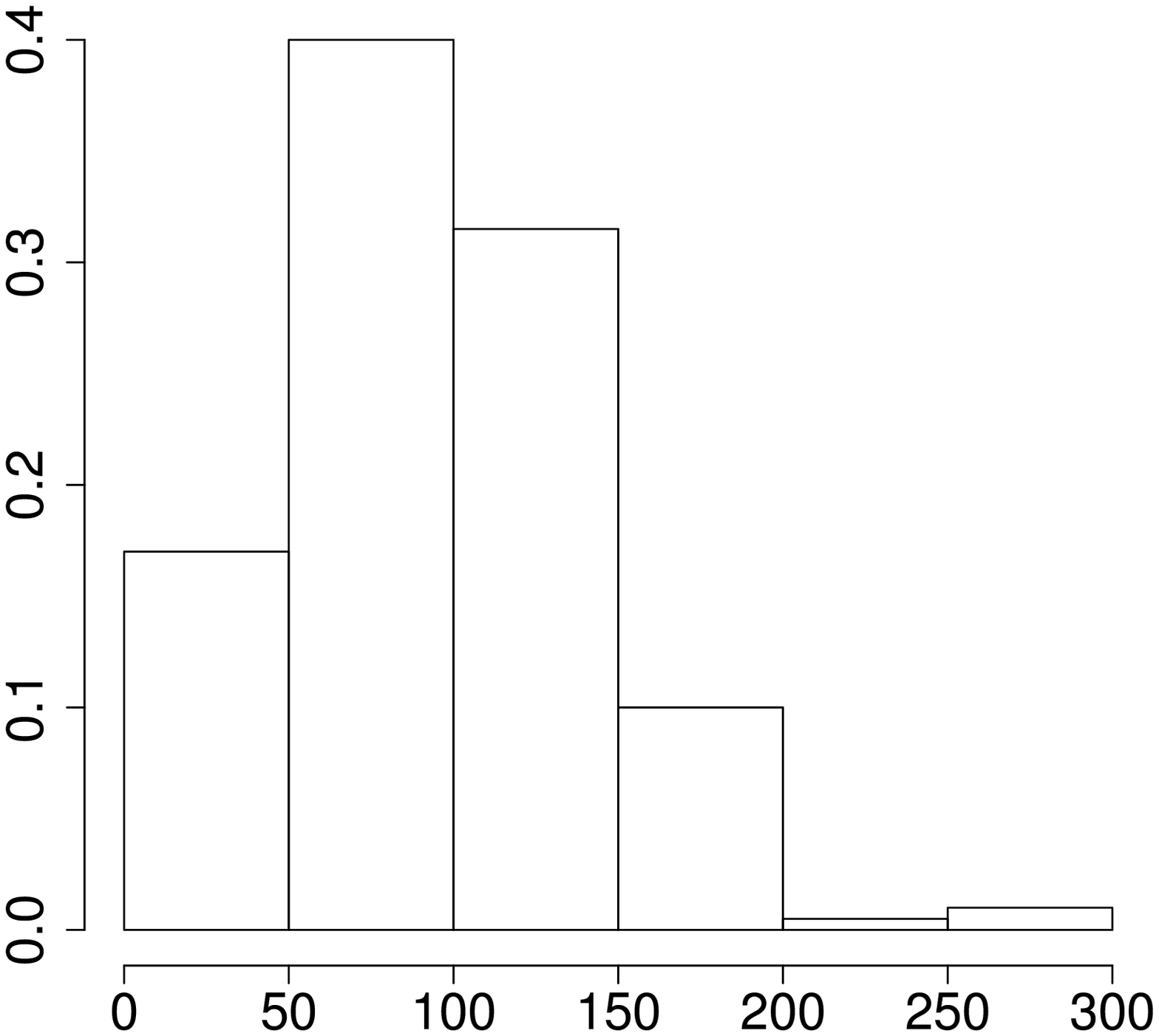}&
   \includegraphics[angle=0,scale=.22]{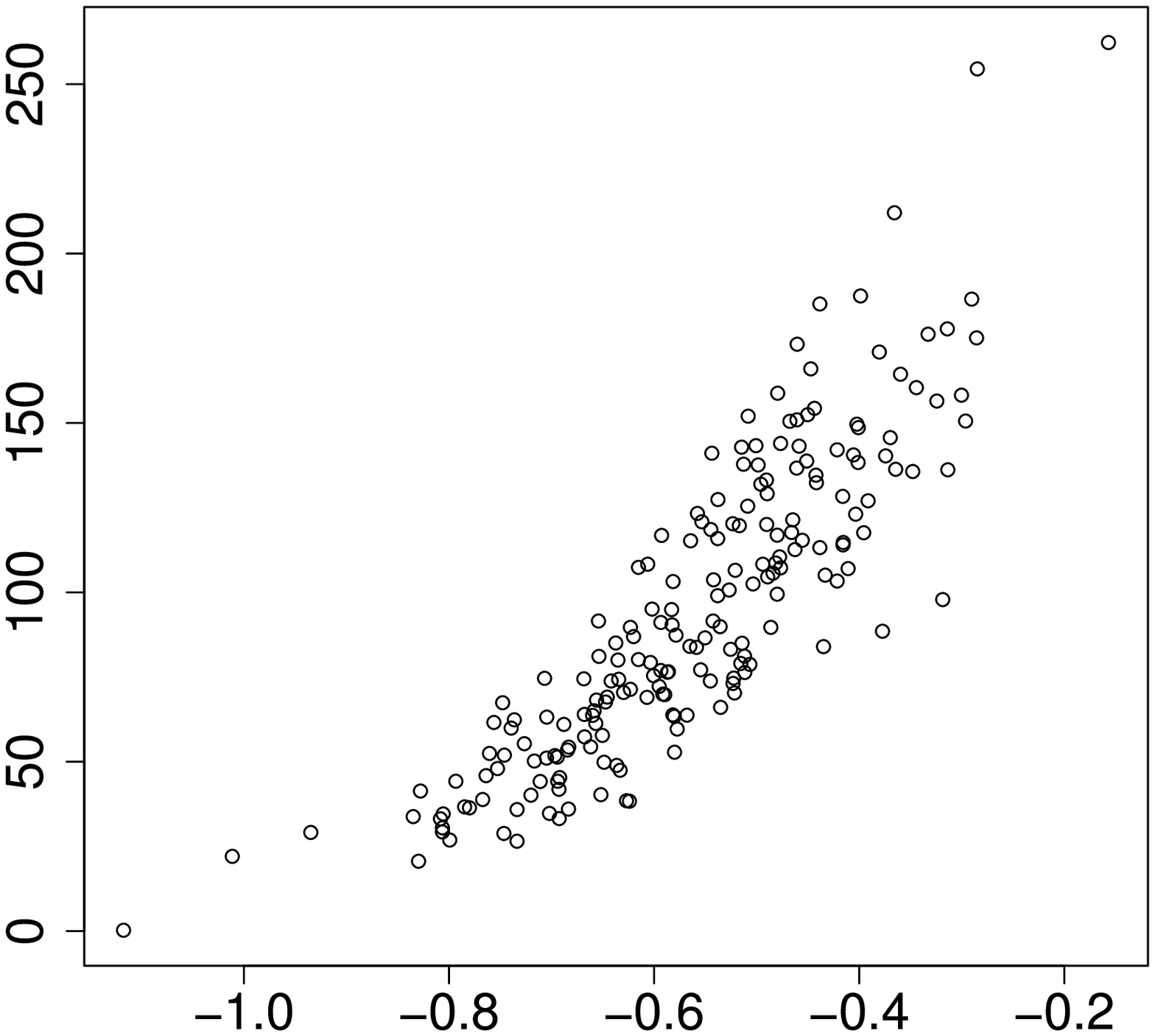}\\
  \footnotesize {$\hat \theta$ when $z$ is estimated} & \footnotesize {$\hat z$} & 
  \footnotesize {Scatterplot of $(\hat\theta,\ \hat z)$} 
  \end{tabular}
  }
        \caption{{\small Estimation of Model 3 when $\alpha=0.05$, $B=0.625$, $\theta=-0.5$, $z=100$, from 200 replications.}}\label{estmodel3-theta_5}

  \end{figure}

When $\theta=0.5$ (Figure  \ref{estmodel3-theta5}), there are only $3.5$ removable points in average on about $215$ points. In this latest extreme case, some estimations of $\theta$ and $z$ were even impossible since there were no removable points at all (in $5$ percent of the simulations). This shows the limit of the estimation procedure in presence of a very rigid tessellation. The average of $\hat \theta$ is $0.55$ and its standard deviation is $22.8$ {\small $10^{-2}$} when $z$ is not estimated. This is surprisingly reasonable in view of the few numbers of removable points. When both $z$ and $\theta$ are estimated, the results become bad: the average of $\hat \theta$ is $0.53$ with a standard deviation of $48$~{\small $10^{-2}$} and the average of $\hat z$ is $189$ with a standard deviation of $345$. Their joint distribution is plotted on bottom left of Figure  \ref{estmodel3-theta5}. A zoom in is plotted on bottom middle, where more than $90\%$ of the points are remaining. The last plot on bottom right shows the repartition of $\hat \theta$ according to the number of removable points in the tessellation, when $z=100$ is assumed to be known. There is a clear bias when the number of removable points is low. Since $z=100$ is fixed, this low number of removable points is associated with  a strong rigidity, so $\theta$ is most likely to be high. Moreover, the standard deviation of $\hat \theta$ seems to decrease with the number of removable points. This is confirmed by Table \ref{table} which contains, for a fixed number of removable points $card(\rem^{\hat \beta}(\g))$, the number of tessellations from our simulations having this number of removable points (named $replications$) and the standard deviation of $\hat \theta_n$ calculated from these tessellations (denoted $sd(\hat\theta)$). 

\begin{table}[h]
\begin{center}
\begin{tabular}[]{c|cccccccc|}
$card(\rem^{\hat \beta}(\g))$ & 1 & 2 & 3 & 4 & 5 & 6 &7 & $> 7$ \\
\hline
$sd(\hat\theta)$ {\footnotesize ($\times10^{-2}$)} & 27.6 & 14.9 & 18.4 & 15.1 &  9.6 & 10.3 &  8.8 & 8.7 \\
\hline
$replications$ & 23 & 41 & 41 &46 &29 &19 & 8 & 7 
\end{tabular}
\end{center}
\caption{{\small Standard deviation of $\hat\theta$ according to the number of removable points, from replications of Model 3 with $\theta=0.5$.}}
\label{table}
\end{table}


  \begin{figure}[htbp]
    \setlength{\tabcolsep}{0.1cm} \centerline{
  \begin{tabular}[]{ccc}
  \includegraphics[angle=0,scale=.16]{Figures/vor-theta5} &
  \includegraphics[angle=0,scale=.22]{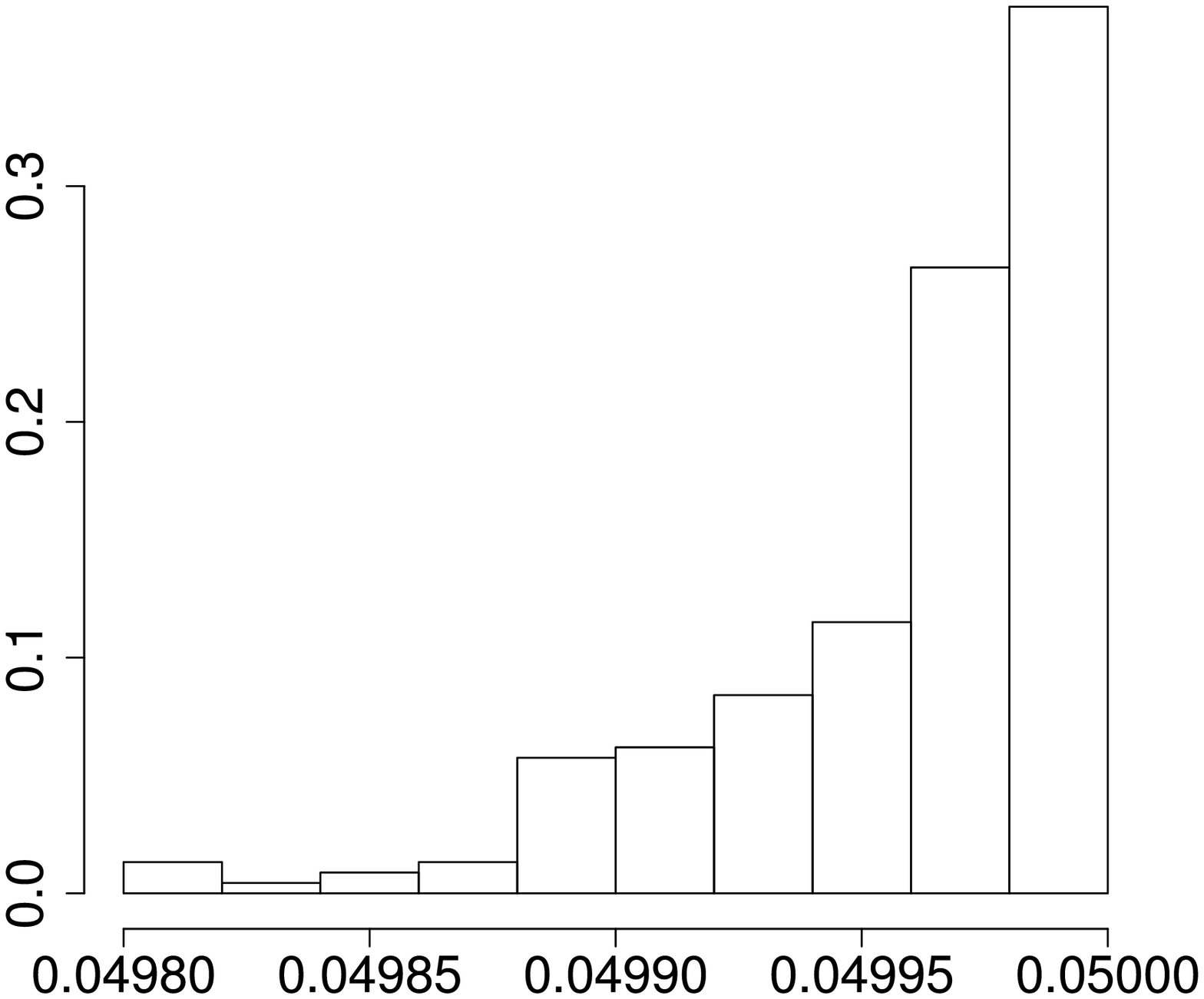} &
  \includegraphics[angle=0,scale=.22]{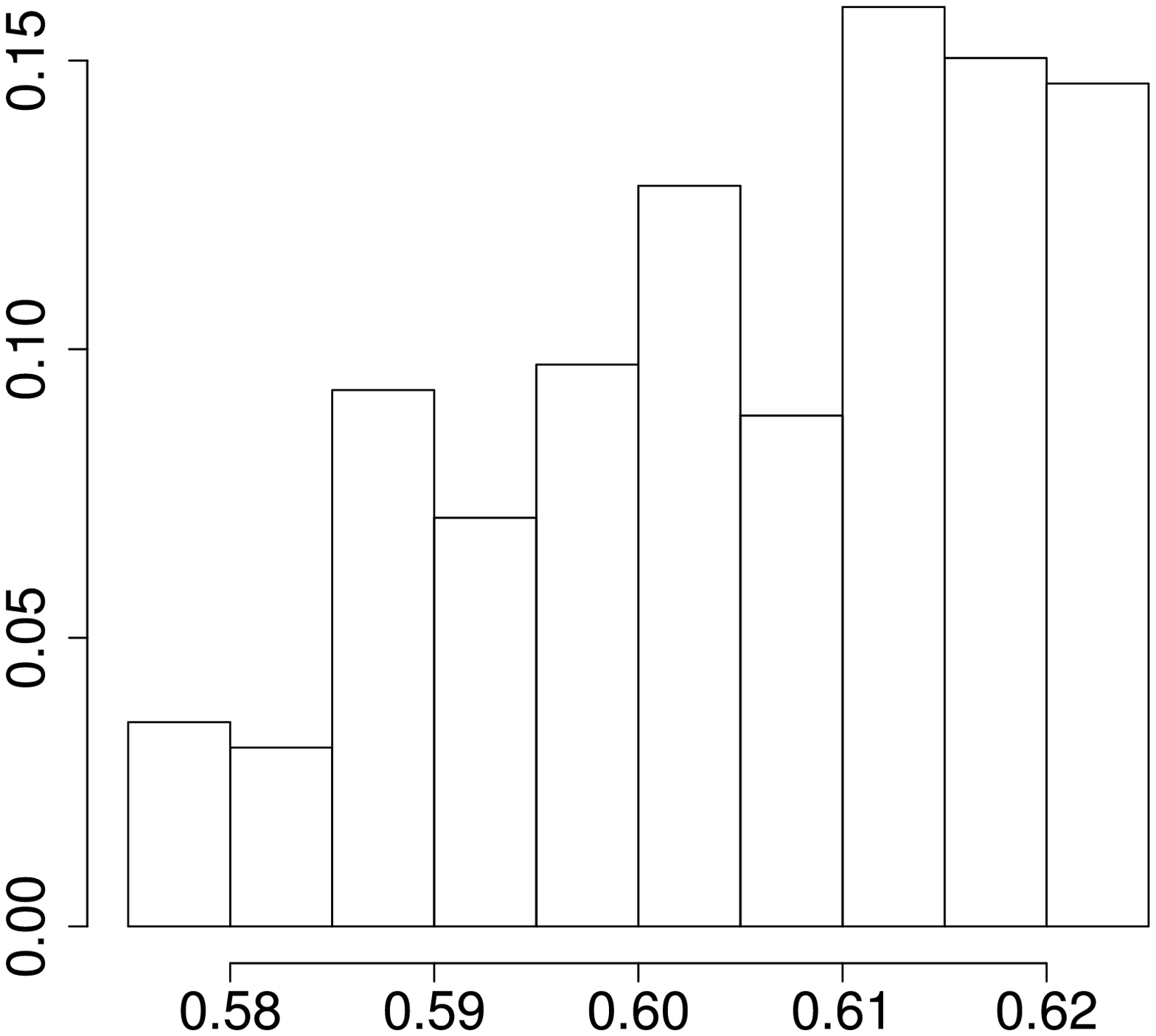} \\
  \footnotesize {Example of tessellation} &
  \footnotesize {$\hat \alpha$} & \footnotesize {$\hat B$}\\
  \includegraphics[angle=0,scale=.22]{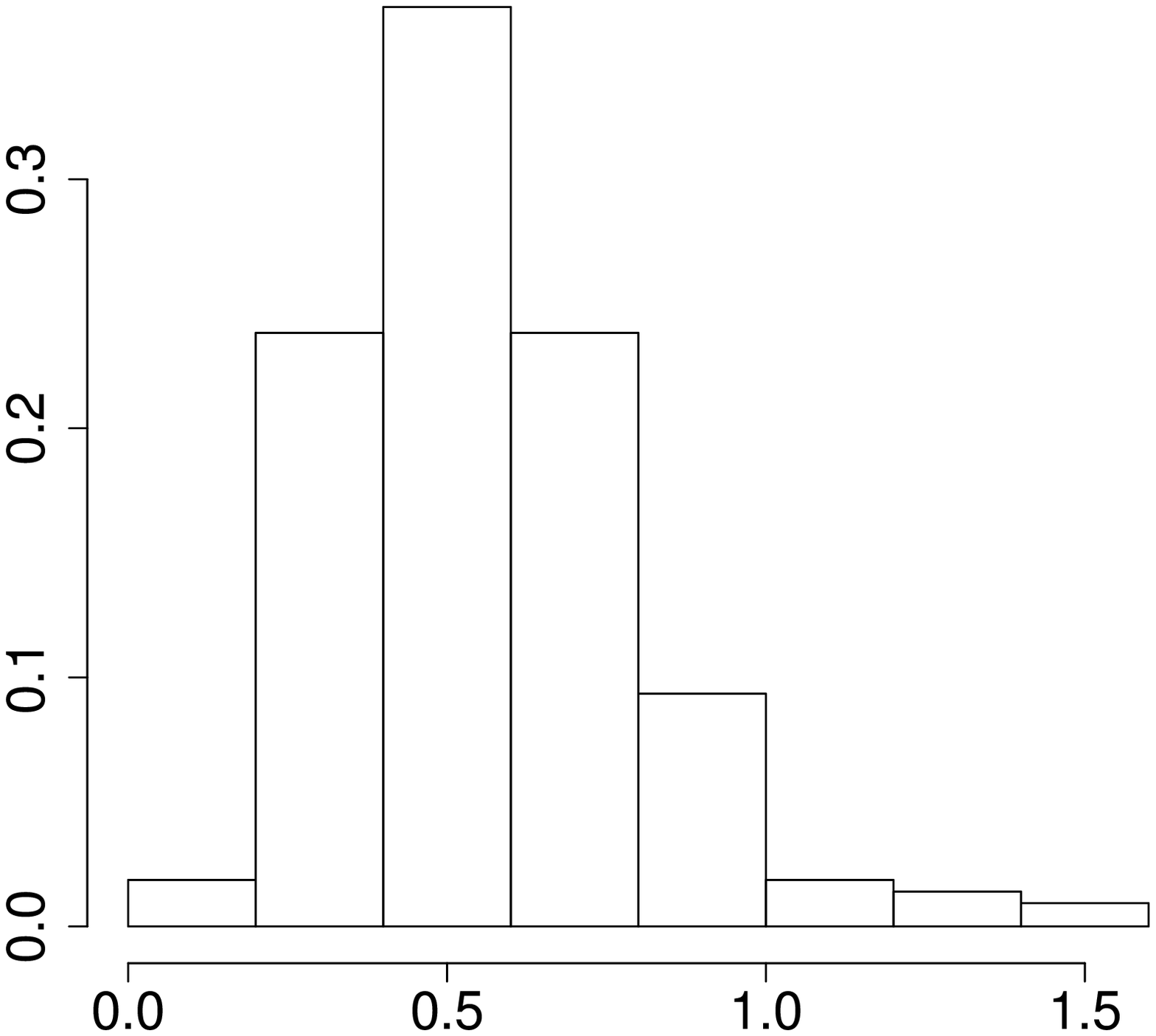}&
   \includegraphics[angle=0,scale=.22]{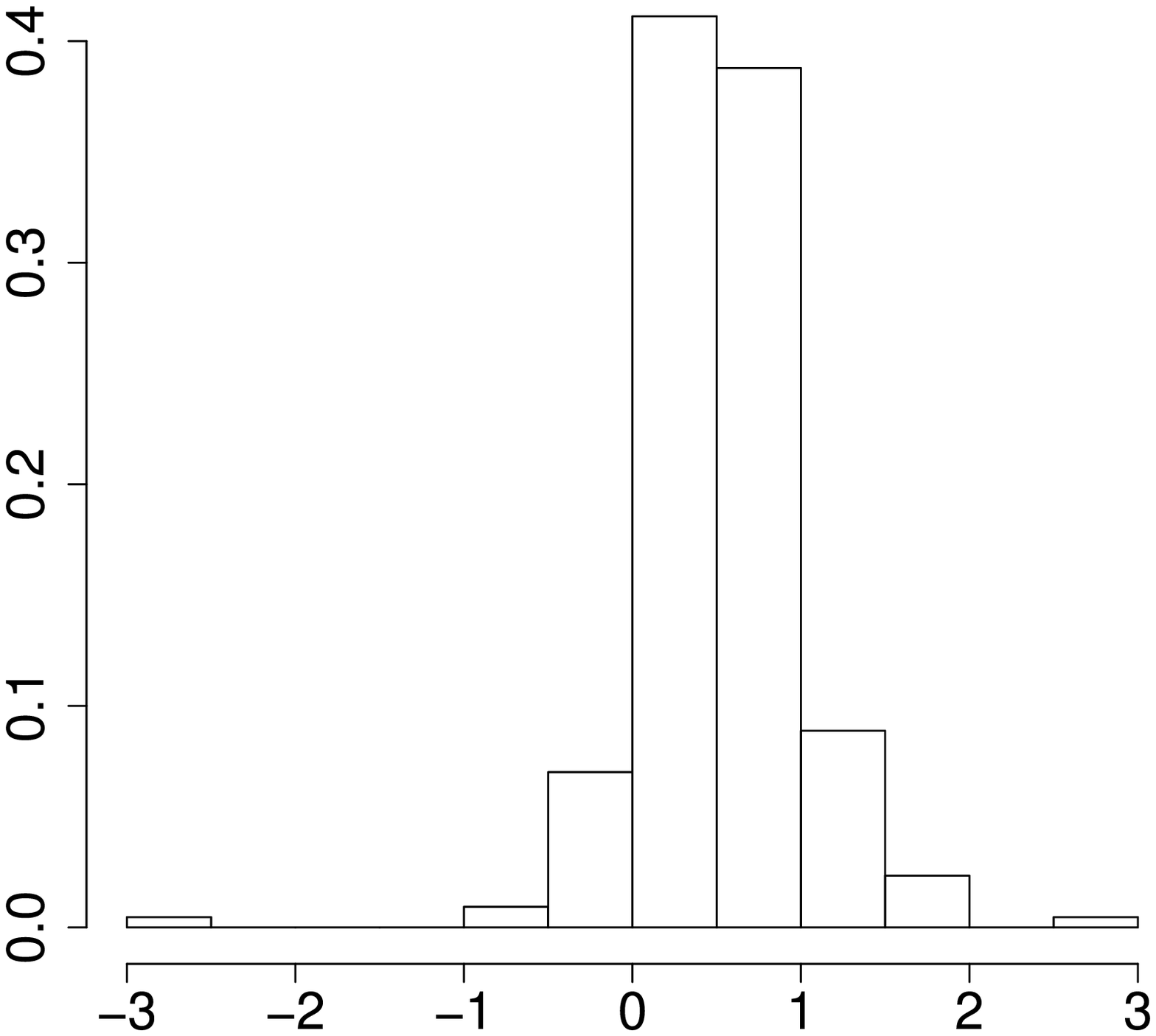}&
   \includegraphics[angle=0,scale=.22]{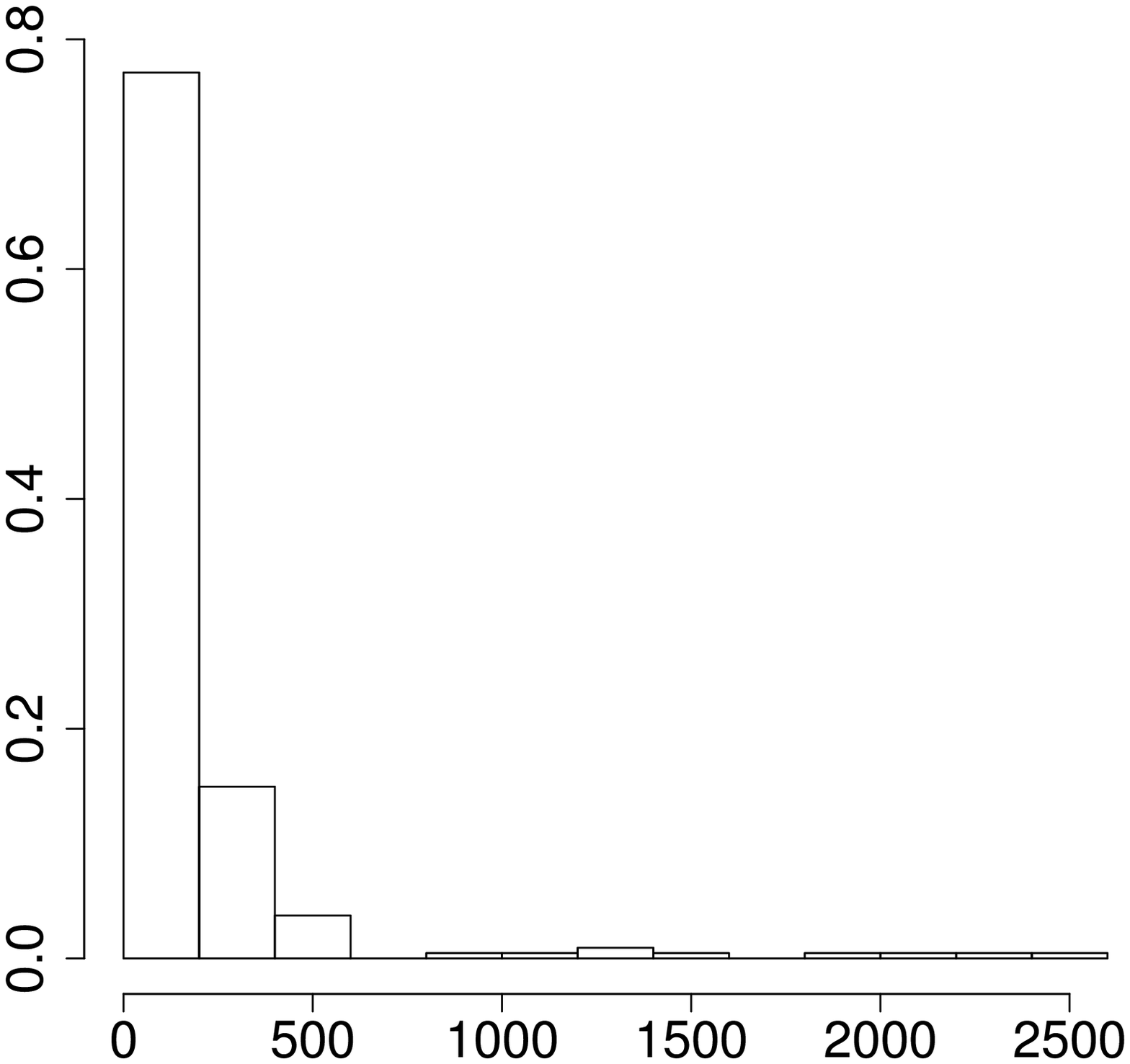}\\
  \footnotesize {$\hat \theta$ when $z$ is known} & \footnotesize {$\hat \theta$ when $z$ is estimated} & 
  \footnotesize {$\hat z$} \\
  \includegraphics[angle=0,scale=.22]{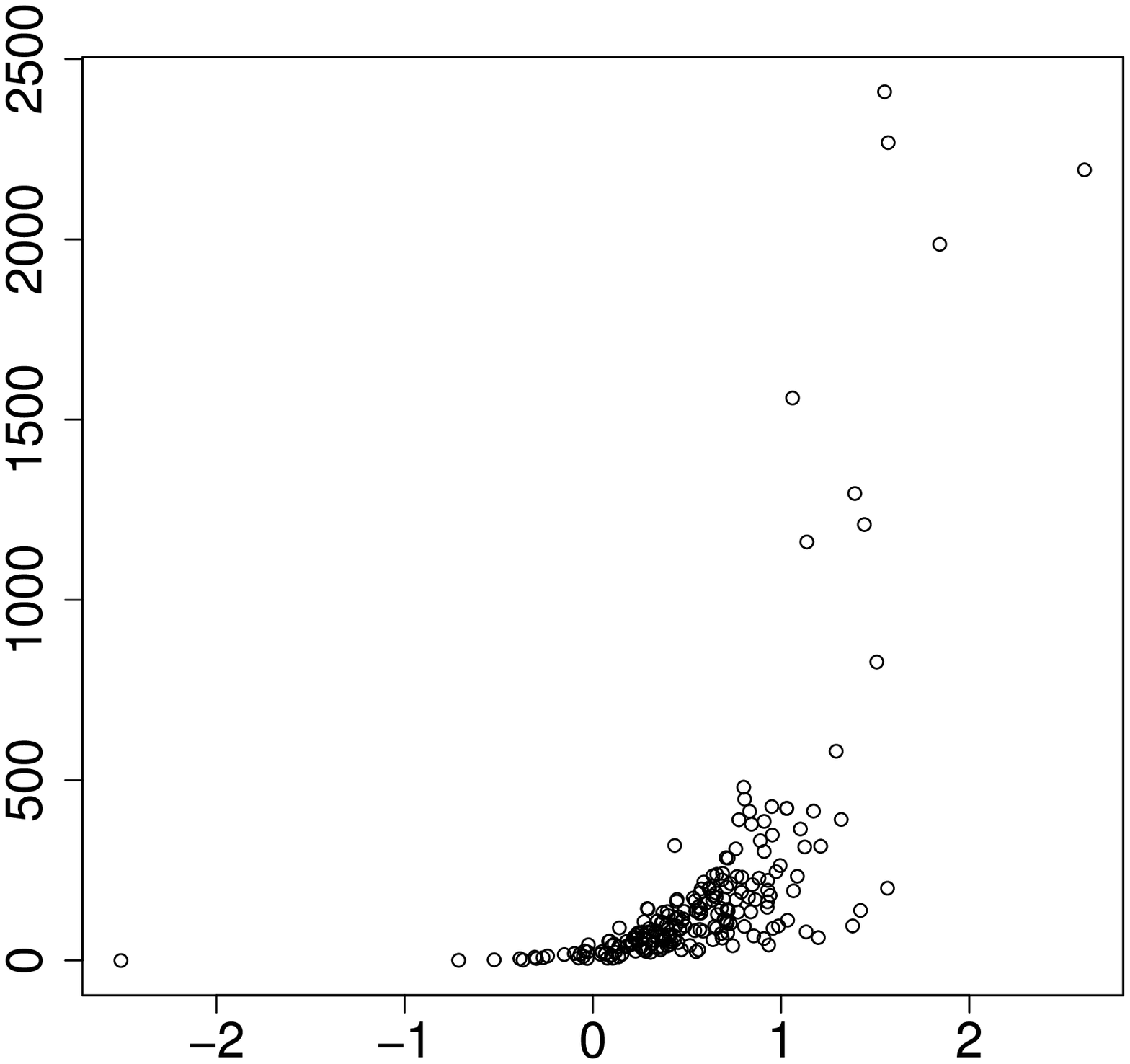}&
   \includegraphics[angle=0,scale=.22]{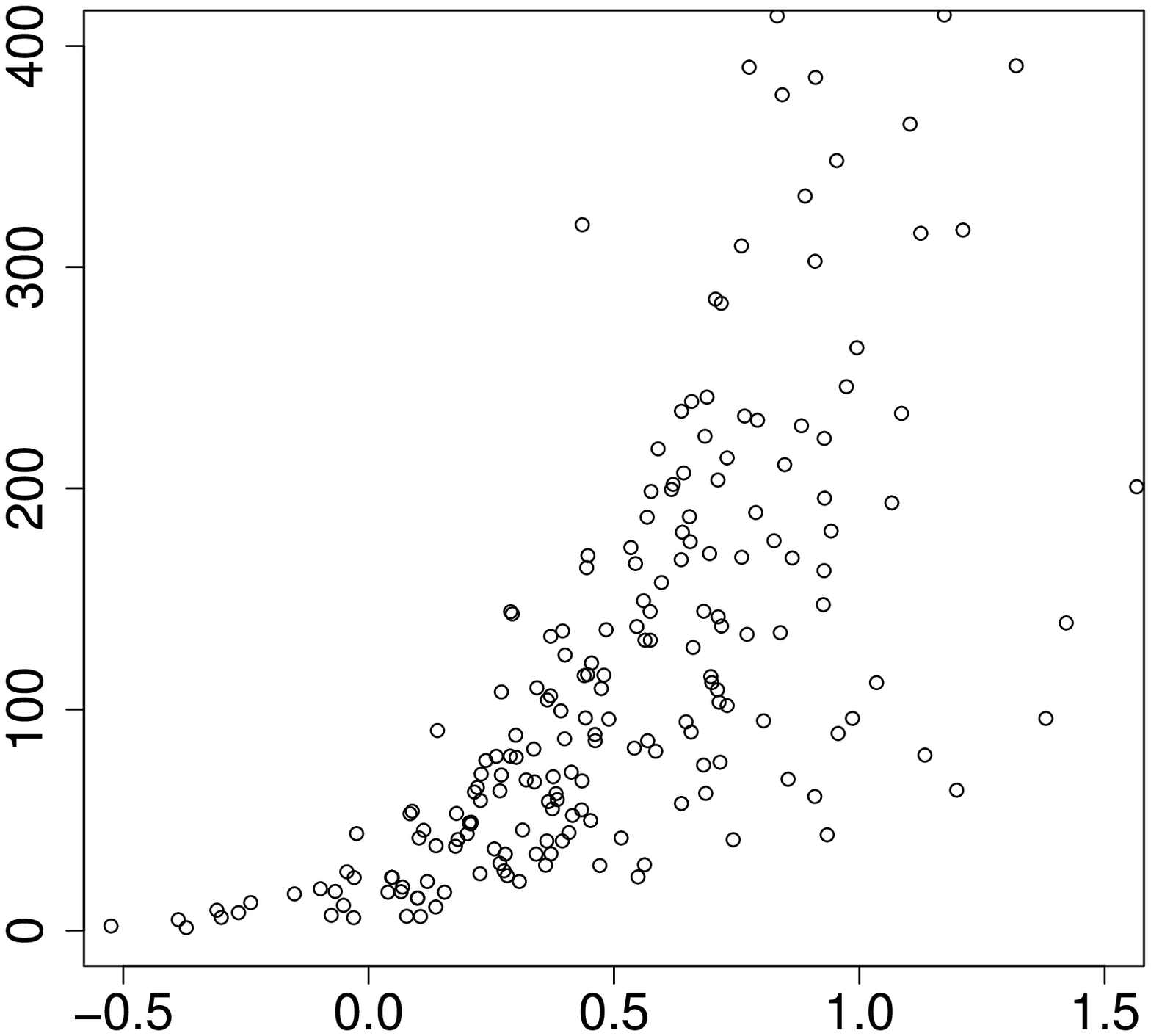}&
   \includegraphics[angle=0,scale=.22]{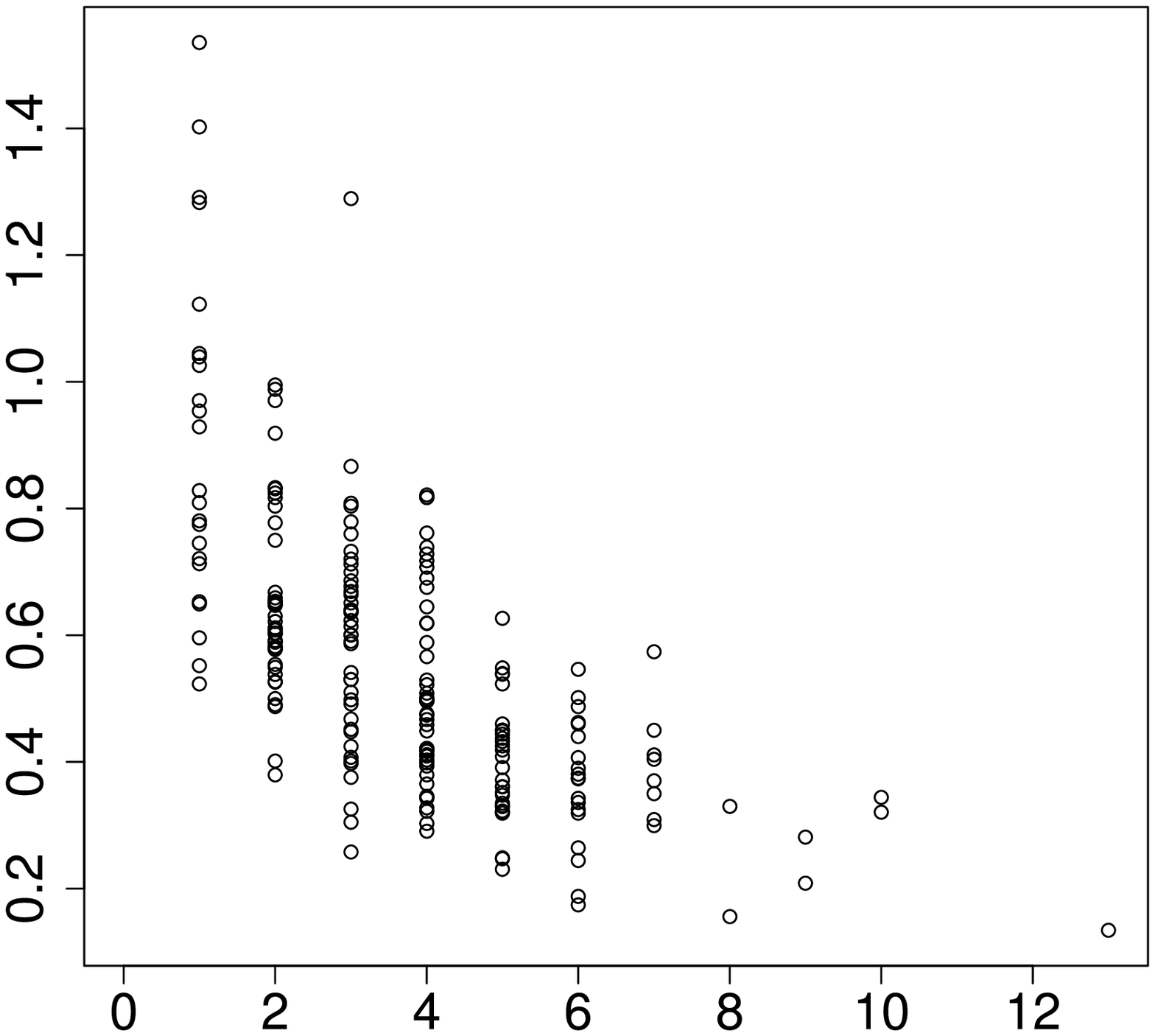}\\
  \footnotesize {Scatterplot of $(\hat\theta,\ \hat z)$} & \footnotesize {$(\hat\theta,\ \hat z)$ zoomed in} & 
  \footnotesize { $\big(card(\rem^{\hat \beta}(\g)),\ \hat \theta\big)$,  $z$ known} 
  \end{tabular}
  }
        \caption{{\small Estimation of Model 3 when $\alpha=0.05$, $B=0.625$, $\theta=0.5$, $z=100$, from 200 replications. On top left: A typical tessellation which is estimated. Bottom right: Repartition of $\hat\theta$ according to the number of removable points observed in the tessellation (see also Table \ref{table}).}}\label{estmodel3-theta5}

  \end{figure}

\subsection{Analysis of residuals}

When fitting a model to a data set, the analysis of the residuals is a standard way to check the quality of the model. The concept of residuals for spatial point processes is not simple. A general definition is proposed in \cite{BTMH}, where the authors also present several diagnostic tools based on residuals. The definition relies on the Campbell equilibrium equation due to Nguyen and Zessin (see \cite{NZ}), where the Papangelou conditional intensity is involved. In our context, the Papangelou conditional intensity does not always exist, because the hardcore interactions are not necessarily hereditary (see Remark 2 in \cite{DL}). Yet, a Campbell equilibrium equation still holds, provided we restrict the support to the set of removable points.

\begin{proposition}
Let $P$ be a stationary Gibbs Delaunay-Voronoi tessellation as defined in Definition \ref{defgibbs}. For every bounded non negative measurable function $\psi$ from $\Rd\times\Md$ to $\R$, we have
$$E_P\left(\sum_{x\in\rem^{\beta}(\g)} \psi(x,\g-x)\right)=E_P\left(\int_{\Rd} \psi(x,\g) e^{-h^{\beta, \theta}(x,\g)}\nu(dx)\right),$$ where $h^{\beta, \theta}$ is defined in (\ref{defh}) and $E_P$ denotes the expectation under $P$.
\end{proposition}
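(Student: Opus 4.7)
The strategy is to derive this Georgii--Nguyen--Zessin type equation by combining the DLR characterization of $P$ in Definition \ref{defgibbs} with the Slivnyak--Mecke identity for the underlying Poisson process, with Proposition \ref{equivalence} playing the role that heredity plays in the classical framework.

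By a standard monotone class argument and since $\psi\geq 0$, it suffices to prove the identity for $\psi(x,\g)=\1_\L(x)\phi(x,\g)$ with $\L\in\Bd$ bounded and $\phi$ bounded non-negative measurable; the general case then follows by letting $\L\uparrow\Rd$ and applying monotone convergence. Fix such an $\L$ and such a $\phi$. Conditioning on $\g_{\L^c}$ through Definition \ref{defgibbs}, the LHS of the claim rewrites as
\begin{equation*}
E_P\!\left[\int \sum_{x\in\eta,\;x\in\rem^\beta(\eta\cup\g_{\L^c})} \phi\big(x,(\eta-x)\cup\g_{\L^c}\big)\,f_\L(\eta,\g_{\L^c})\,\pi^\nu_\L(d\eta)\right].
\end{equation*}
To the inner integral I apply the Slivnyak--Mecke (reduced Campbell) identity for $\pi^\nu_\L$,
\begin{equation*}
\int\sum_{x\in\eta}H(x,\eta)\,\pi^\nu_\L(d\eta)=\int_\L\int H(x,\eta+x)\,\pi^\nu_\L(d\eta)\,\nu(dx),
\end{equation*}
with the choice $H(x,\eta)=\1_{x\in\rem^\beta(\eta\cup\g_{\L^c})}\,\phi(x,(\eta-x)\cup\g_{\L^c})\,f_\L(\eta,\g_{\L^c})$. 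This turns the inner integral into
\begin{equation*}
\int_\L\int \1_{x\in\rem^\beta((\eta+x)\cup\g_{\L^c})}\,\phi(x,\eta\cup\g_{\L^c})\,f_\L(\eta+x,\g_{\L^c})\,\pi^\nu_\L(d\eta)\,\nu(dx).
\end{equation*}

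Two identifications then finish the argument. First, by Proposition \ref{equivalence} applied to $(\eta+x)\cup\g_{\L^c}$, whenever this configuration lies in $\Mdi$ the event $\{x\in\rem^\beta((\eta+x)\cup\g_{\L^c})\}$ coincides with $\{\eta\cup\g_{\L^c}\in\Mdi\}$; outside $\Mdi$ the density $f_\L(\eta+x,\g_{\L^c})$ vanishes, so the indicator may be rewritten freely without changing the integrand. Second, on the set where both $\eta\cup\g_{\L^c}$ and $(\eta+x)\cup\g_{\L^c}$ belong to $\Mdi$, definitions (\ref{density}) and (\ref{defh}) give
\begin{equation*}
f_\L(\eta+x,\g_{\L^c})=f_\L(\eta,\g_{\L^c})\,e^{-h^{\beta,\theta}(x,\eta\cup\g_{\L^c})}.
\end{equation*}
Reassembling $\g=\eta\cup\g_{\L^c}$ and taking the outer expectation under $P$, together with the fact that $P$-almost surely $\g\in\Mdi$, yields exactly $E_P\big[\int_\L \phi(x,\g)e^{-h^{\beta,\theta}(x,\g)}\nu(dx)\big]$. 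A final limit $\L\uparrow\Rd$ and monotone convergence produce the statement.

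The main obstacle is the delicate bookkeeping of the removability indicators across the Slivnyak--Mecke exchange. The identity naturally produces the asymmetric indicator $\1_{x\in\rem^\beta((\eta+x)\cup\g_{\L^c})}$, and Proposition \ref{equivalence} is precisely what is needed to reduce it to the symmetric requirement that both $\eta\cup\g_{\L^c}$ and $(\eta+x)\cup\g_{\L^c}$ lie in $\Mdi$; only then does the local energy $h^{\beta,\theta}$ from (\ref{defh}) make sense and the ratio of Gibbs densities factor as $e^{-h^{\beta,\theta}}$. In the hereditary case this step is automatic because every point of an allowed configuration is removable.
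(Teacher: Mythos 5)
Your argument is correct, and it follows the standard route: condition via the DLR definition, apply the Mecke--Slivnyak formula to the inner Poisson integral, and use Proposition \ref{equivalence} to convert the removability indicator into the statement that both $\eta\cup\g_{\L^c}$ and $(\eta+x)\cup\g_{\L^c}$ lie in $\Mdi$, after which the density ratio is $e^{-h^{\beta,\theta}}$. The paper itself does not prove this proposition but defers to \cite{DL}, whose proof is precisely of this form, so your proposal matches the intended argument; the only point stated a little loosely is the claim that $f_\L(\cdot,\g_{\L^c})$ vanishes off $\Mdi$, which holds only for $P$-almost every outside configuration $\g_{\L^c}$ (the cells and pairs fully outside $\L$ do not contribute to $E_\L$, so one needs $\g\in\Mdi$ $P$-a.s.\ to rule them out), but this is exactly the almost-sure setting in which the identity is asserted.
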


This proposition is proved in \cite{DL}. From this equation, following \cite{BTMH}, we can define the innovation process, for  any bounded set $\Delta$ in $\Rd$ and for every function $\psi$ as above:
$$I\left(\Delta,\psi,h^{\beta,\theta},\nu\right)=\sum_{x\in\rem^{\beta}(\g)\cap \Delta} \psi(x,\g-x)-\int_\Delta \psi(x,\g) e^{-h^{\beta, \theta}(x,\g)}\nu(dx).$$

The residuals are then defined as an estimation of the innovations:
$$R\left(\Delta,\psi,h^{\hat\beta,\hat\theta},\hat\nu\right)=\sum_{x\in\rem^{\hat\beta}(\g)\cap \Delta} \psi(x,\g-x)-\int_\Delta \psi(x,\g) e^{-h^{\hat\beta, \hat\theta}(x,\g)}\hat\nu(dx),$$
where $\hat\nu$ is an estimation of the intensity measure $\nu$, which is simply $\hat z\lambda$ in the stationary case.

This generalization of the residuals to the setting of possible non-hereditary interactions allows to perform several diagnostic plots. We refer to \cite{BTMH} for a presentation of different relevant choices for $\psi$, and for some diagnostic tools. A smoothed version of the residuals is also proposed, leading to more appealing graphics. The main purpose of the residuals analysis is to check whether the fitted model is misspecified.

As an illustration, let us assess the effect of a misspecified model to the tessellation simulated in top left of Figure \ref{model3bis}.  It actually corresponds to a sample from Model 3 where $\alpha=0.05$, $B=+\infty$, $z=100$ and $\theta=-0.5$. But we will improperly fit a stationary Poisson process to this sample, then we will fit Model 2 (where the interaction relies on the Delaunay triangulation). Finally the correct Model 3 will be fitted for a sake of comparison. Figure \ref{fig-tess-for-residuals} represents the sample according to these three points of view.

\begin{figure}[htbp]
    \setlength{\tabcolsep}{0.1cm} \centerline{
  \begin{tabular}[]{ccc}
  \includegraphics[angle=0,scale=.23]{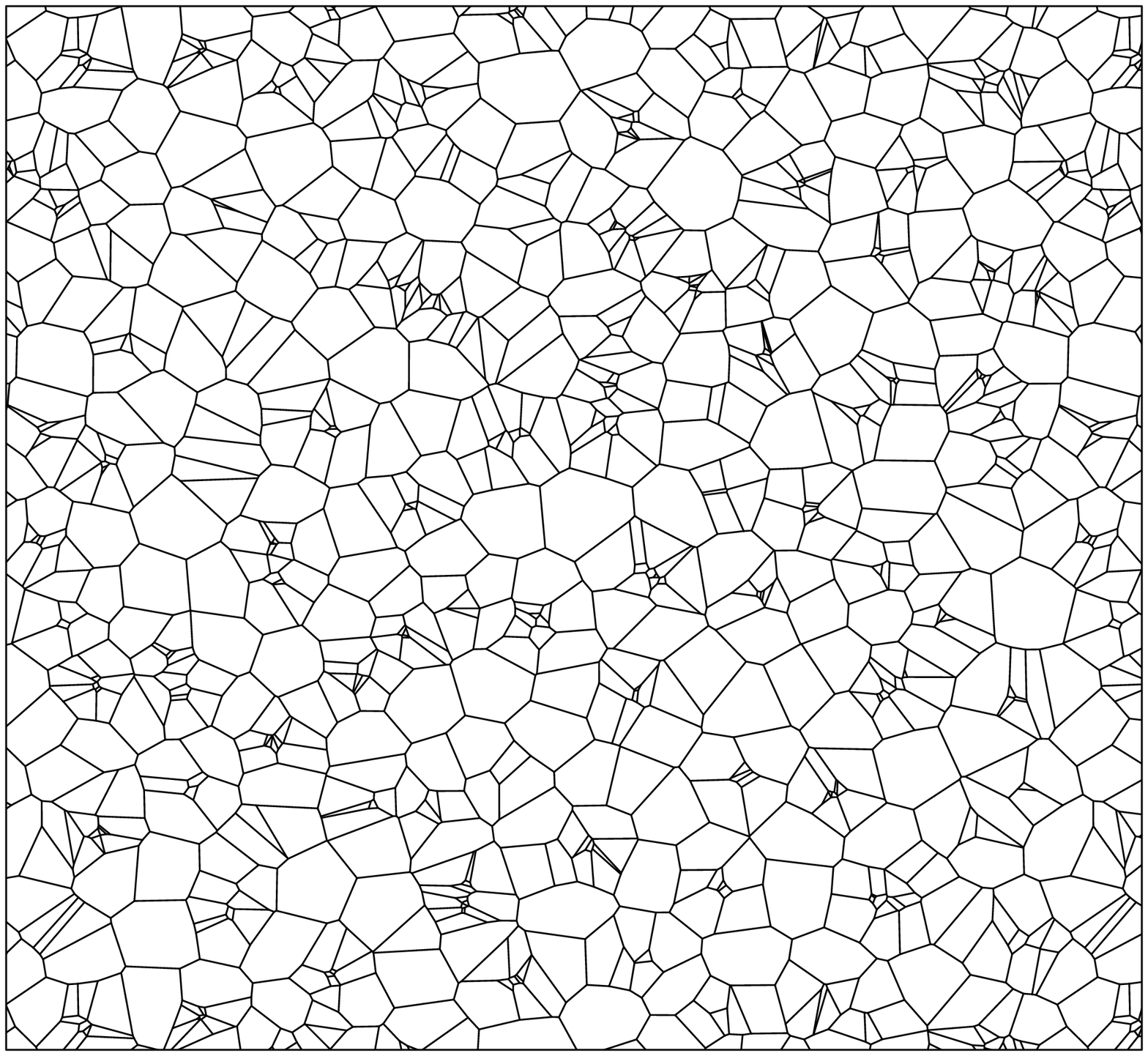} &
  \includegraphics[angle=0,scale=.23]{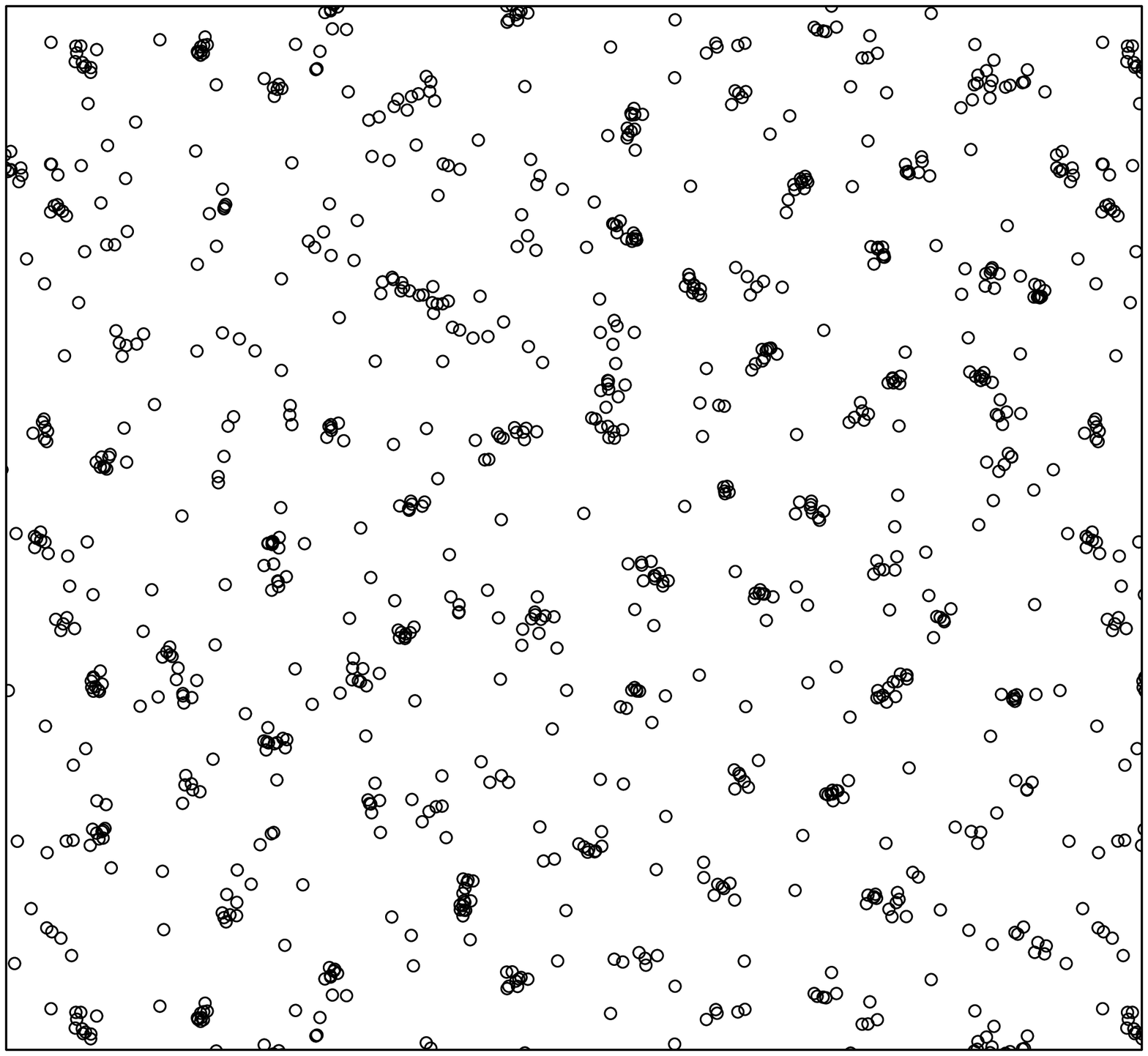} &
  \includegraphics[angle=0,scale=.23]{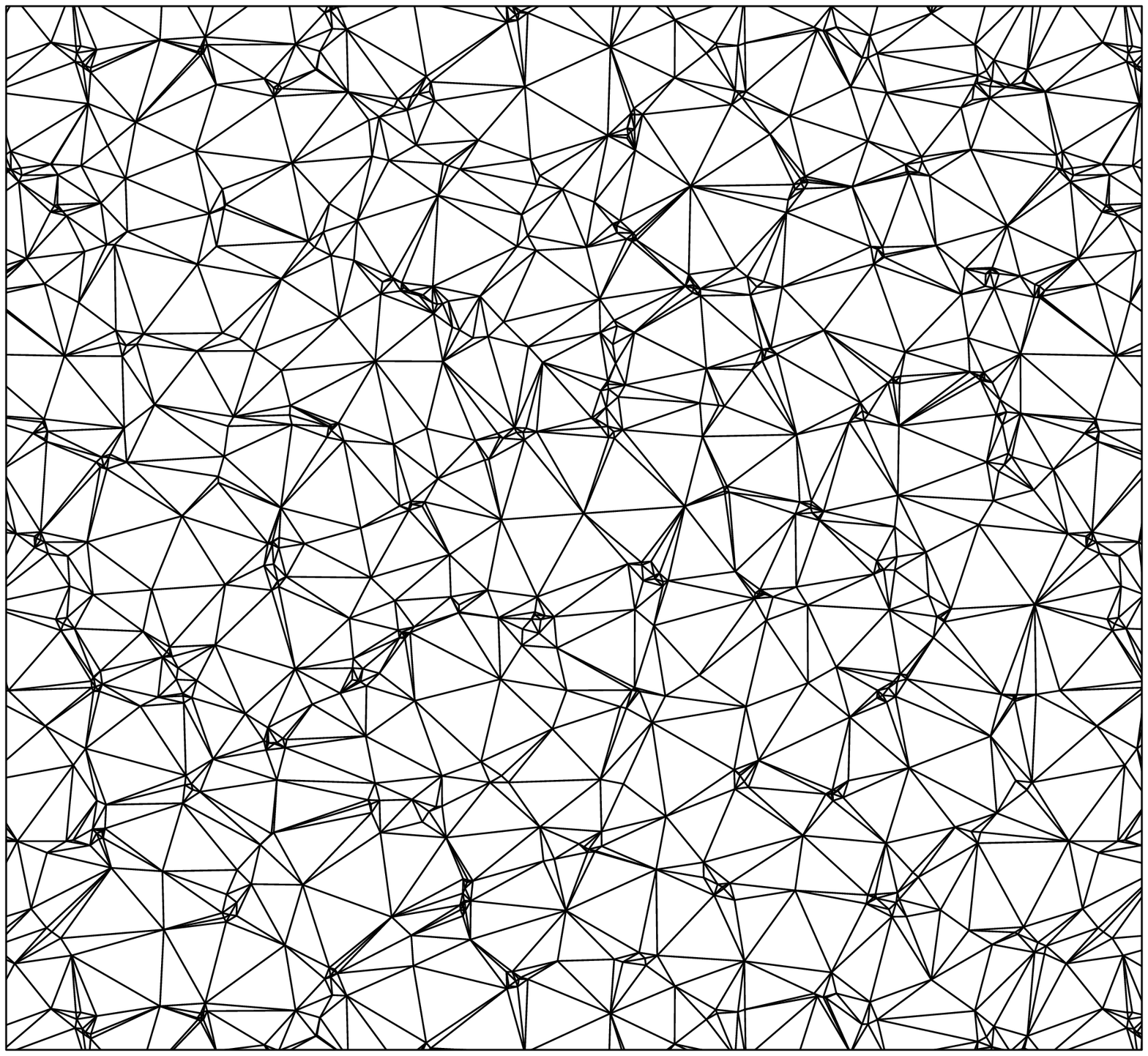} \\
  \footnotesize {Voronoi} &
  \footnotesize {Points} & \footnotesize {Delaunay}
   \end{tabular}
  }
        \caption{{\small Voronoi tessellation (left) and Delaunay tessellation (right) from the same point configuration (middle), coming from a simulation of Model 3  (top right of Figure \ref{model3bis}).}}\label{fig-tess-for-residuals}
  \end{figure}

We consider the simple case when $\psi=1$. This corresponds to the so-called raw residuals, which have the following form in the stationary case:
$$R\left(\Delta,1,h^{\hat\beta,\hat\theta},\hat z\right)=card\left(\rem^{\hat\beta}(\g)\cap \Delta\right)-\hat z\int_\Delta e^{-h^{\hat\beta, \hat\theta}(x,\g)}dx.$$
To check the fitted model,  we use the QQ-plot diagnostic presented in \cite{BTMH}. It consists in comparing the empirical quantiles of the fitted residuals to the empirical quantiles of bootstrapped residuals.

If we fit a stationary Poisson process to the sample, we obtain an estimated intensity $\hat z=833$. The raw residuals, computed on squares $\Delta$ with side $0.01$, are shown in top left of Figure \ref{fig-residuals}. The same kind of residuals have then been computed on 100 simulated Poisson process with intensity $\hat z$. A QQ-plot of these residuals with a $95\%$-confidence interval is shown on top right of  Figure \ref{fig-residuals}, where the residuals of the original sample have been added (crosses). An example of raw residuals from a simulated Poisson process is represented in the middle of this plot.
It appears that the residuals of our sample do not behave as those from the simulated Poisson point processes. The stationary Poisson model is then misspecified. 

Similarly, if we fit Model 2 to the same sample, we obtain $\hat \alpha=0.055$ and $\hat \theta=4.49$ when $z=1000$. The raw residuals 
for this model are shown in bottom left of Figure \ref{fig-residuals}. We have bootstrapped residuals from 100 simulated samples from Model 2 with the same parameters. One example of such residuals is shown on bottom middle. The QQ-plot, in bottom right, shows that the original sample does not seem to follow Model 2. 

Finally, Model 3 is fitted. The estimation gives $\hat \alpha=0.049$, $\hat B=97.8$ and $\hat \theta=-0.56$ when $z=100$. The same plots as before are represented in Figure \ref{fig-bons-residuals}. According to the QQ-plot, one should not reject the fitted model. Let us remark that the estimation of $\theta$ is rather bad for our sample: the error is $-0.06$ although other simulations  shows that the standard deviation of the errors is about $0.02$. This is the reason why the distribution of the residuals is on the edge of the confidence interval in the QQ-plot. The two residuals images represented on the left show that some residuals may be very negative on some squares (the black ones). This is confirmed by the dispersion of the lowest quantiles in the QQ-plot. Thus, the distribution of the residuals can not be Gaussian in this example. This differs from the asymptotic gaussianity of most residuals conjectured in \cite{BTMH}.

  \begin{figure}[htbp]
    \setlength{\tabcolsep}{0.1cm} \centerline{
  \begin{tabular}[]{ccc}
  \includegraphics[angle=0,scale=.25]{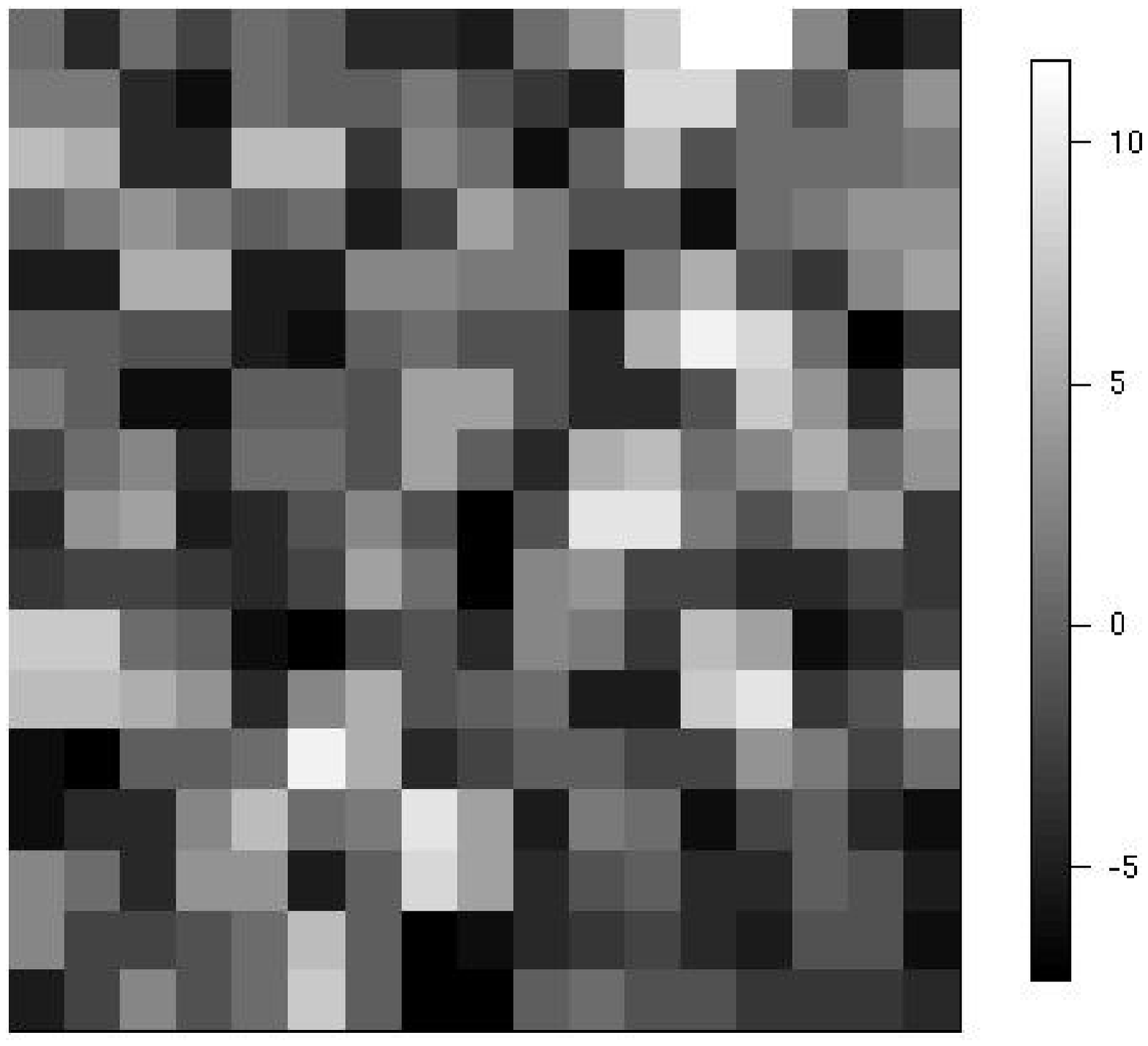} &
  \includegraphics[angle=0,scale=.25]{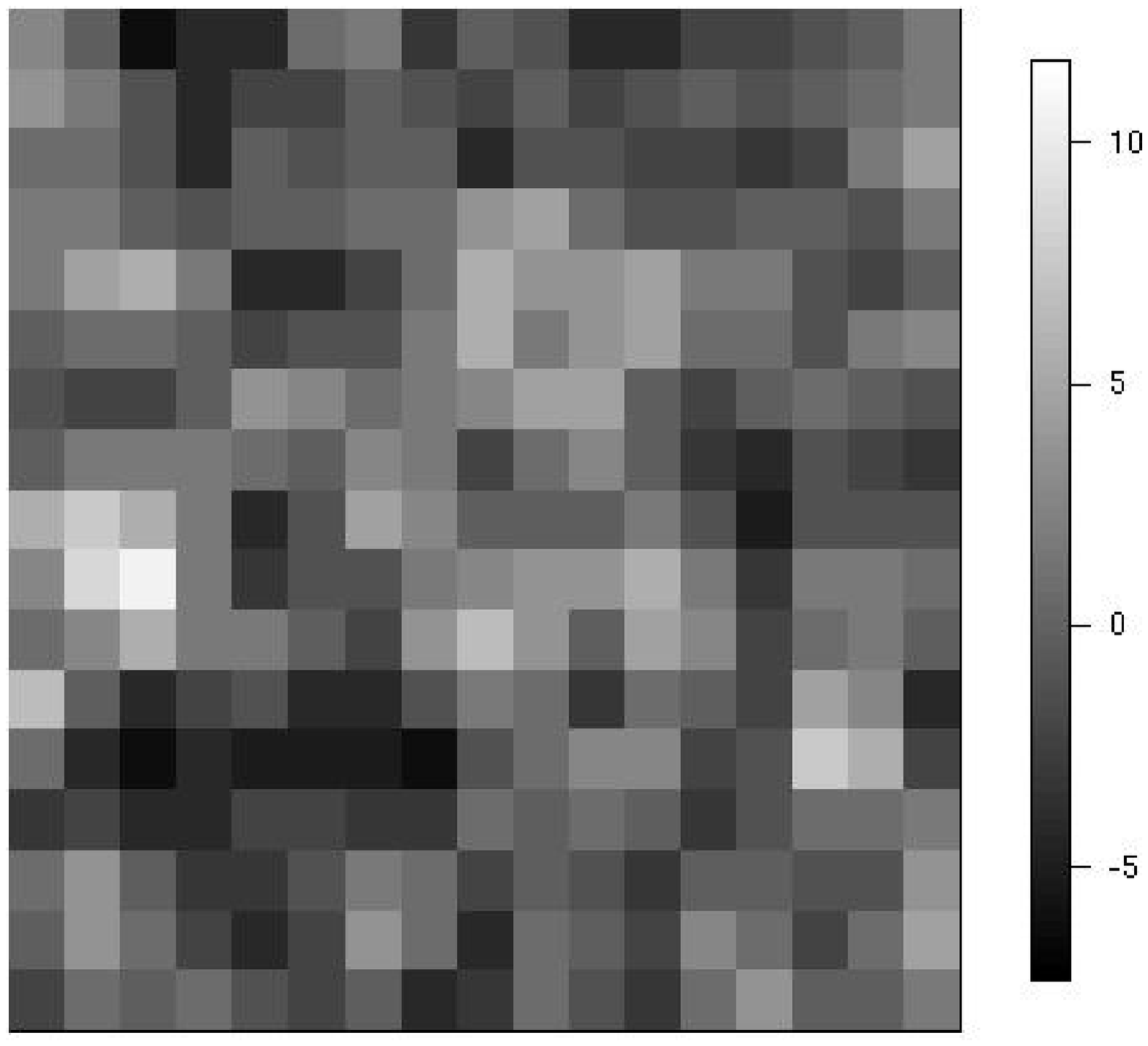} &
  \includegraphics[angle=0,scale=.25]{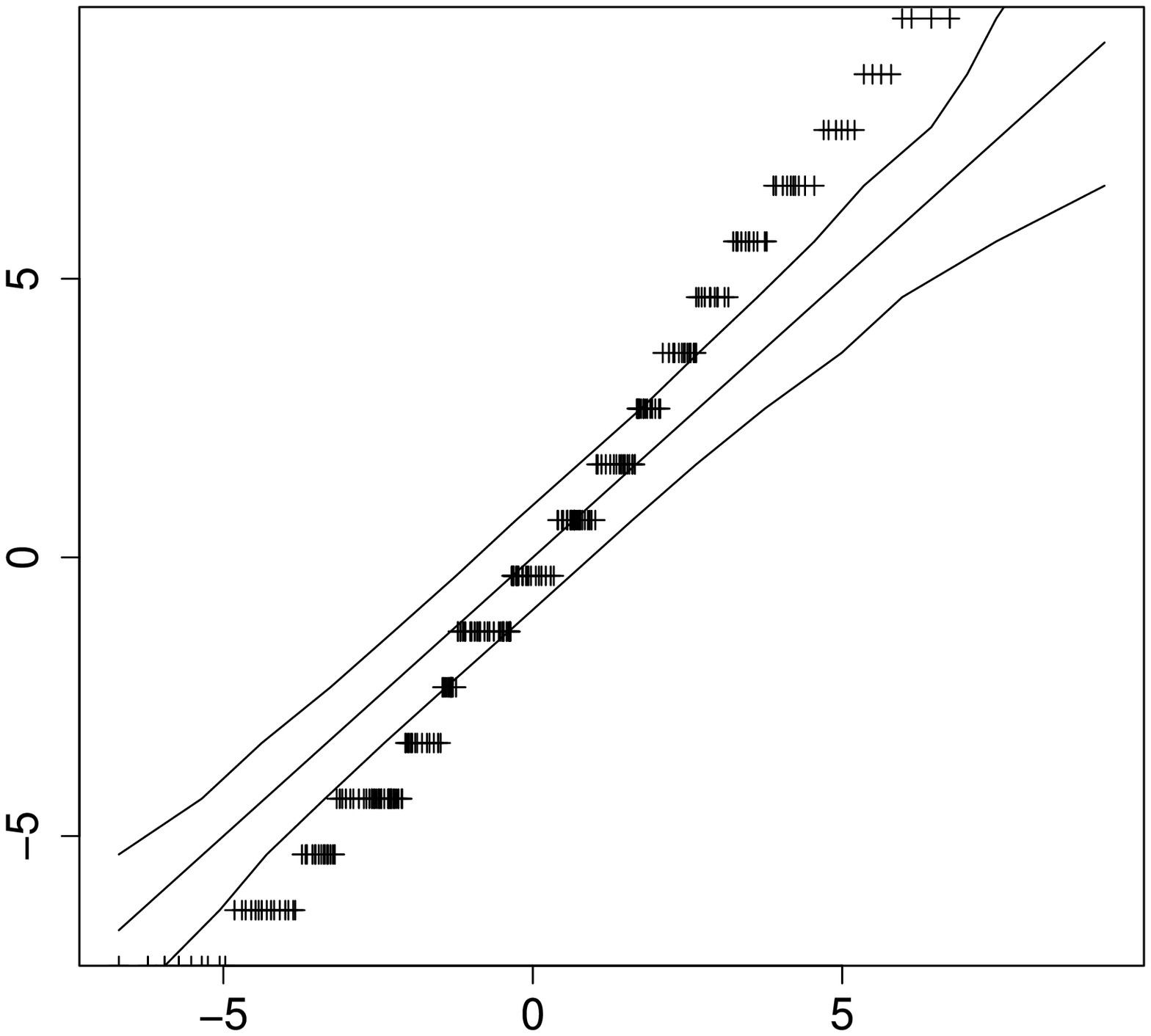} \\
  \footnotesize {Residuals when fitting a Poisson} &
  \footnotesize {Simulated Poisson residuals} & \footnotesize {QQplot from bootstrap}\\
  \includegraphics[angle=0,scale=.25]{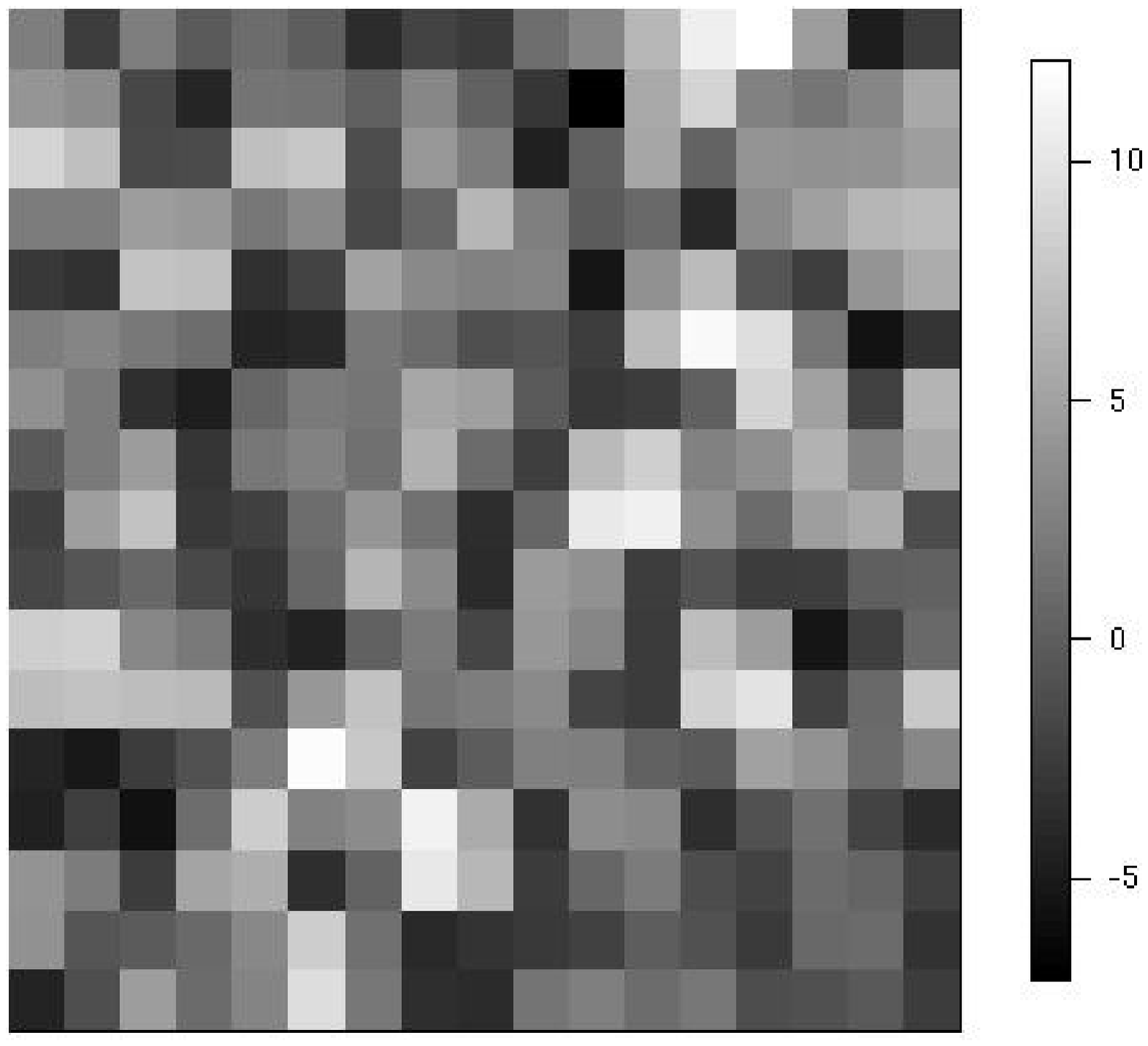}&
   \includegraphics[angle=0,scale=.25]{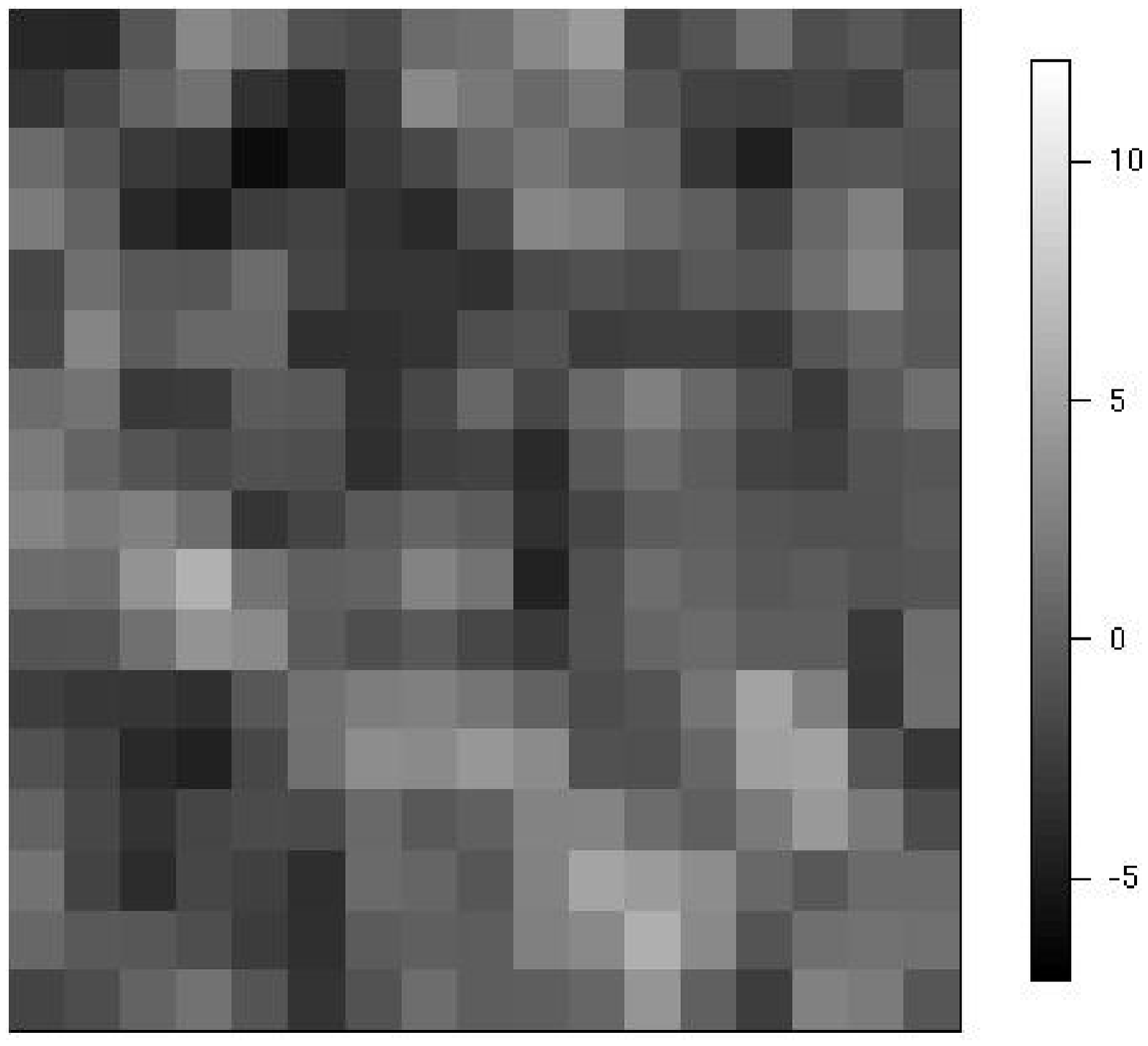}&
   \includegraphics[angle=0,scale=.25]{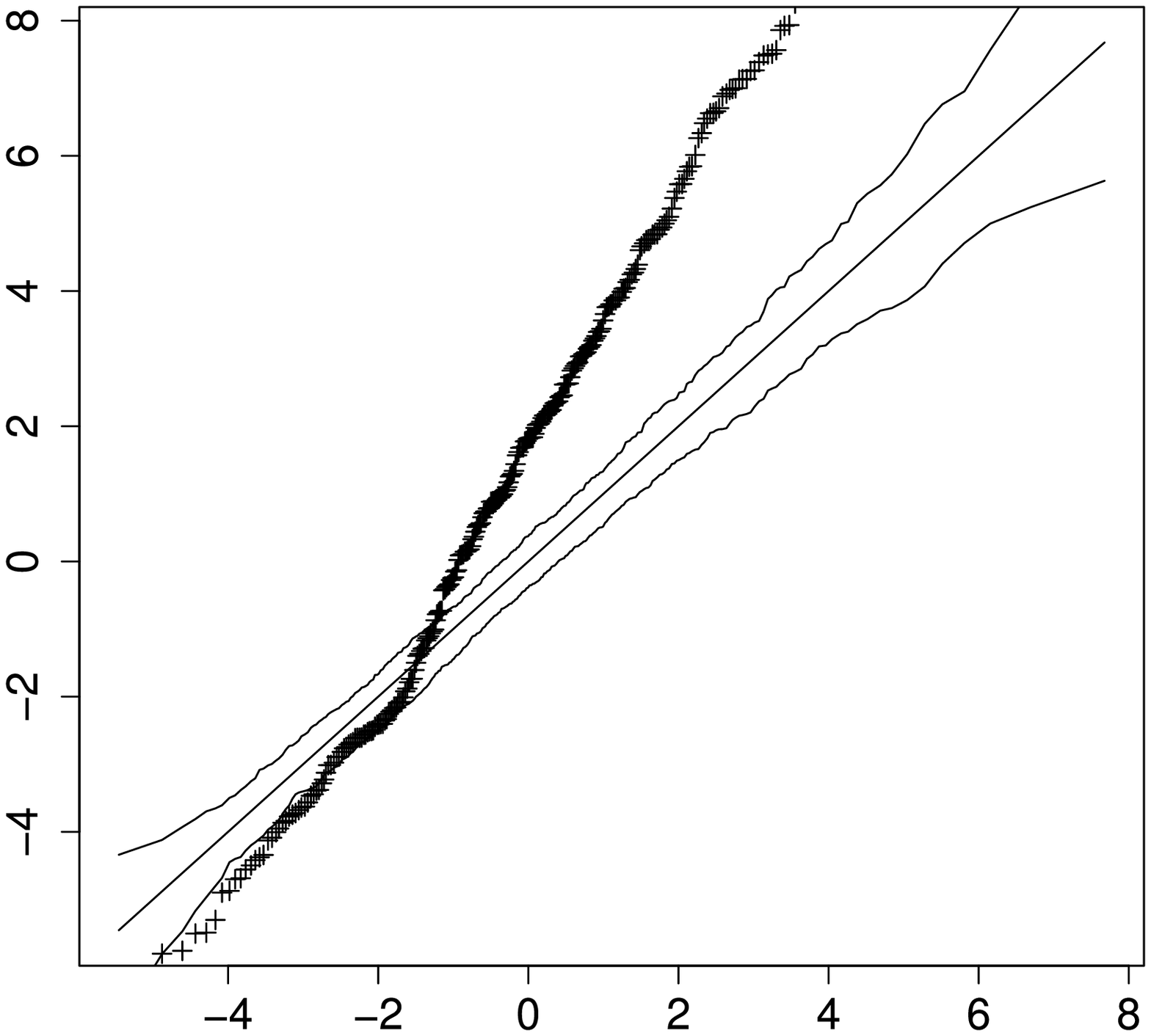}\\
  \footnotesize {Residuals when fitting Model 2} & \footnotesize {Simulated residuals from Model 2} & 
  \footnotesize {QQplot from bootstrap} 
  \end{tabular}
  }
        \caption{{\small Analysis of residuals for misspecified models.}}\label{fig-residuals}

  \end{figure}

\begin{figure}[htbp]
    \setlength{\tabcolsep}{0.1cm} \centerline{
  \begin{tabular}[]{ccc}
 \includegraphics[angle=0,scale=.25]{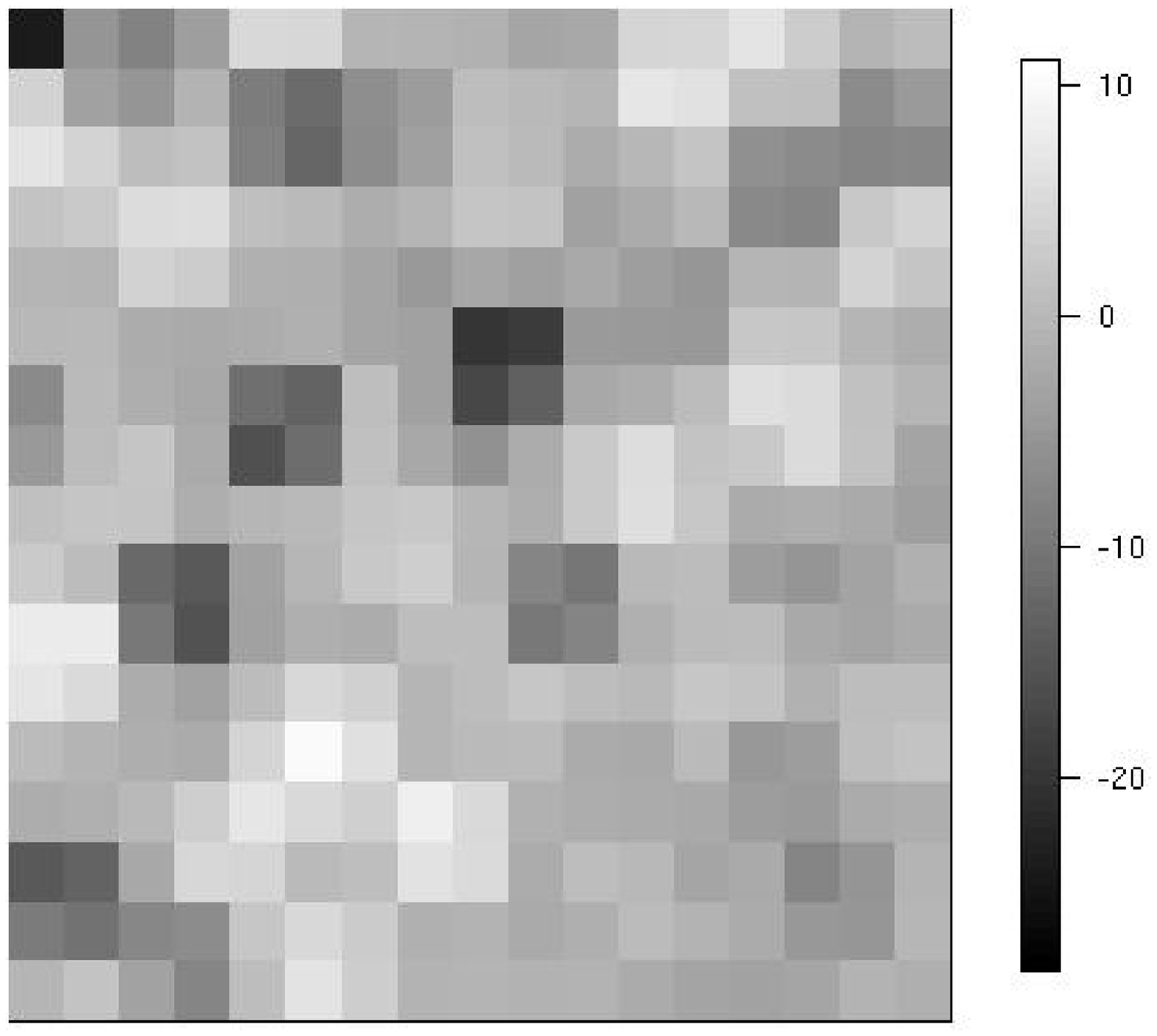}&
   \includegraphics[angle=0,scale=.25]{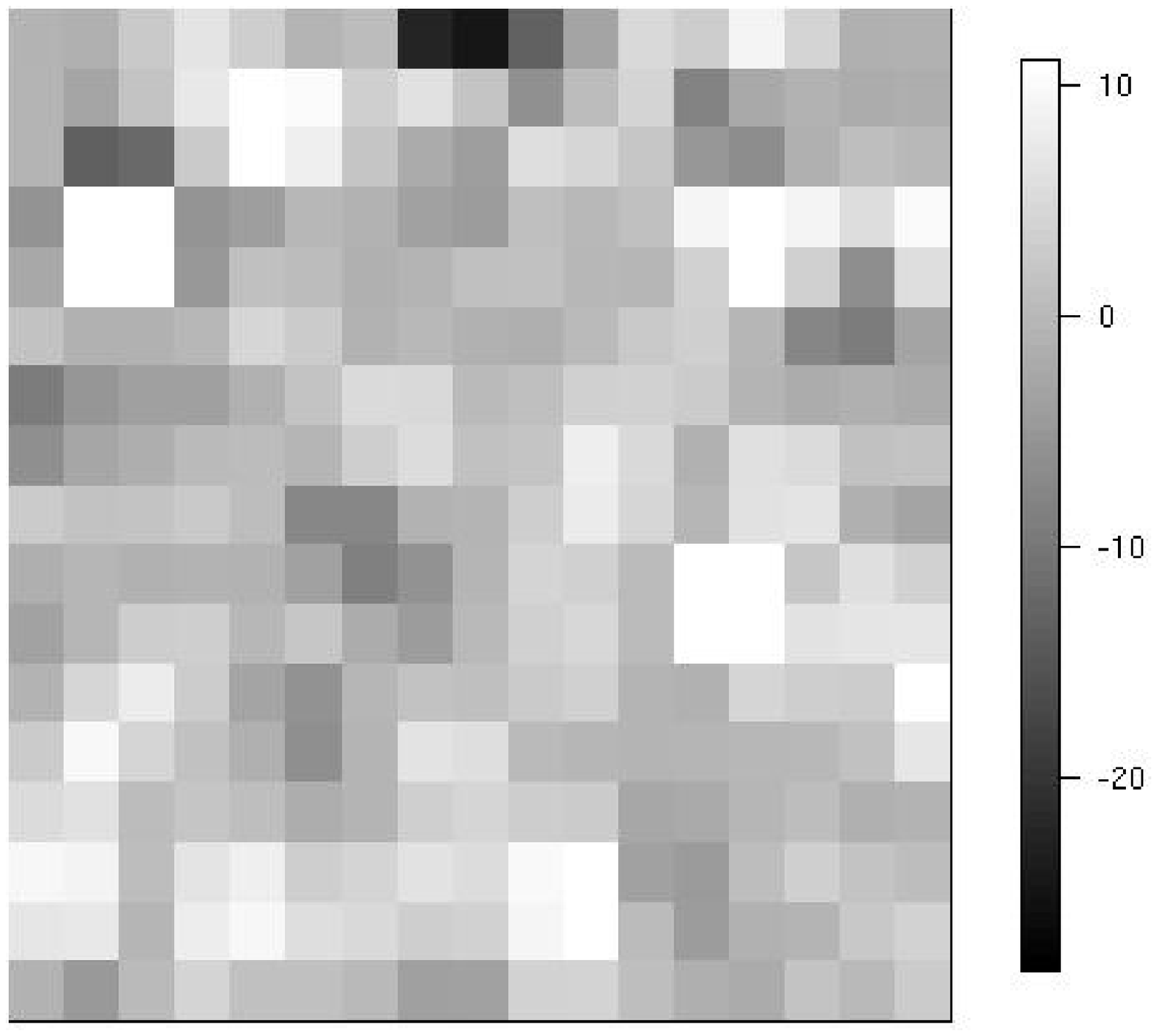}&
   \includegraphics[angle=0,scale=.25]{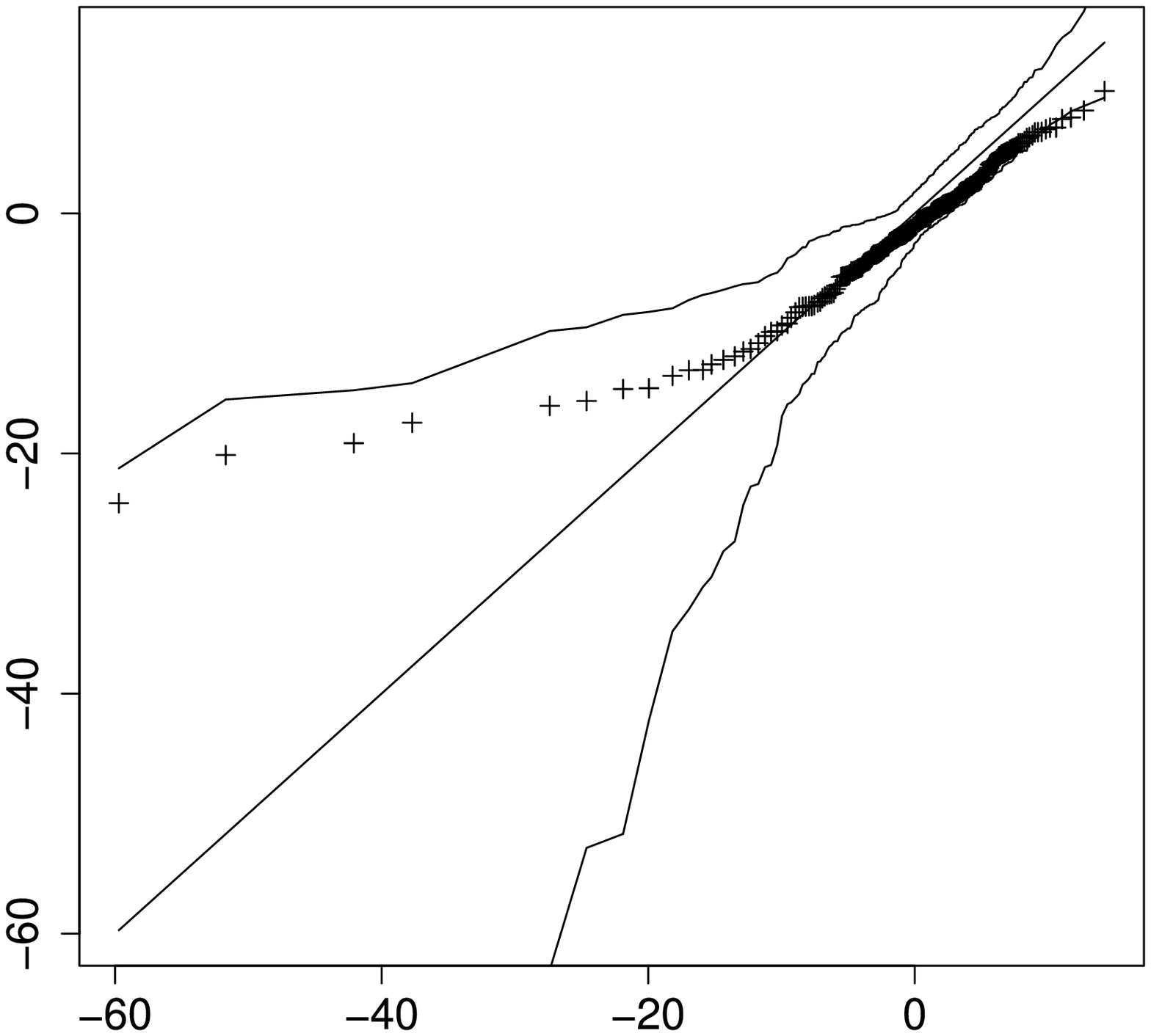}\\
  \footnotesize {Residuals when fitting Model 3} & \footnotesize {Simulated residuals from Model 3} & 
  \footnotesize {QQplot from bootstrap} 
  \end{tabular}
  }
        \caption{{\small Analysis of residuals for the correct model.}}\label{fig-bons-residuals}

  \end{figure}

\appendix
\section{}

\subsection{Existence of Gibbs Delaunay-Voronoi tessellations.}
The existence results presented here are published in a more general setting in \cite{D} and \cite{DDG}. They are slightly modified and simplified so that they suit better the setting of Gibbs Delaunay-Voronoi tessellations. We suppose that the energy functions have the forms (\ref{del}) or (\ref{vor}). The three following assumptions {\bf H1}-{\bf H3} are sufficient to define the conditional densities $f_\L$ in (\ref{density}).
\begin{itemize}
\item[{\bf H1}] $\Mdi \neq \emptyset$.
\end{itemize}
For every $\g$ in $\Mdi$ and every $\L$ in $\B(\Rd)$, $\tilde\g$ in $\Md$ is called a $(r,\L)$-modification of $\g$ (with $r>0$) if there exist distinct $y_1,y_2,\ldots,y_n$ in $\L$ satisfying $|y_i-x_i|<r$ for every $1\le i \le n$ (with $\g_\L=\{x_1,x_2,\ldots,x_n\}$) such that $\tilde\g=\{y_1,y_2,\ldots,y_n\} \cup \g_{\L^c}$.
\begin{itemize}
\item[{\bf H2}] $\Mdi$ is a locally open set in $\Md$ which means that for every $\g$ in $\Mdi$, every $\L$ in $\B(\Rd)$ there exists $r_\L(\g)>0$ such that any $(r_\L(\g),\L)$-modification of $\g$ is in $\Mdi$. 

\item[{\bf H3}] The interactions $V_1$ and $V_2$ are stable which means that there exists a constant $K>0$ such that 
$$ V_1\ge -K \qquad \text{and} \qquad V_2\ge -K. $$
\end{itemize}

Now let us give a collection of assumptions used in the proof of the existence of Gibbs Delaunay-Voronoi tessellations.
For every $R>0$, we denote by $\g_R$ an infinite configuration in $R$-equilateral position. That means that any triangle in $\del(\g_R)$ is equilateral with length of sides equal to $R$ (see the initial configuration presented in Figure \ref{model1} for an example).

\begin{itemize}
\item[{\bf H4}] There exist $K_1>0$ and $K_1'>0$ such that, for all  $\g \in \Mdi$ and all  $\L\in\B(\Rd)$,
$$\qquad  \text{Card}(\g_\L) \ge K_1 \vol(\L) - K_1'.$$

\item[{\bf H5}] There exist $R>0$, $0<r<R/2$ and $A\ge 0$ such that for every $\L\in\B(\Rd)$, every $(r,\L)$-modification $\tilde \g$ of $\g_R$,  and every $T,T' \in \del(\tilde\g)$ with $T\sim_{\del} T'$,
\begin{equation}\label{condition1}
 V_1(T)\le A \quad \text{ and }\quad V_2(T,T')\le A ;
\end{equation}
Respectively, for every $C,C' \in \vor(\tilde\g)$ with $C\sim_{\vor} C'$, 
\begin{equation}\label{condition1bis}
V_1(C)\le A \quad \text{ and }\quad V_2(C,C')\le A.
\end{equation}

\end{itemize}

Now we are able to give a first existence theorem
\begin{theoreme}\label{existence1}
There exists a stationary Gibbs Delaunay-Voronoi tessellation for any intensity $\nu=z\lambda$ ($z>0$) and any energy functions $(E_\L)_{\L\in\B(\Rd)}$ satisfying assumptions {\bf H1}, {\bf H2}, {\bf H3}, {\bf H4} and {\bf H5}.
\end{theoreme}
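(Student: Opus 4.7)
My plan is to follow the classical three-step scheme for existence of Gibbs point processes, adapted to the tessellation setting as done in \cite{D} and \cite{DDG}: first construct finite volume Gibbs measures on an increasing sequence of windows with a well chosen boundary condition, stationarize them by spatial averaging, extract a weakly convergent subsequence, and finally verify that the limit satisfies the DLR equations of Definition \ref{defgibbs}.

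For the finite volume step, I would fix the $R$-equilateral reference configuration $\g_R$ provided by {\bf H5} and, on $\L_n=[-n,n]^2$, define $P_n$ as the probability measure with density $f_{\L_n}(\cdot,(\g_R)_{\L_n^c})$ with respect to $\pi^z_{\L_n}$. The crucial point is that the partition function $Z_{\L_n}((\g_R)_{\L_n^c})$ lies in $(0,+\infty)$: the upper bound follows from stability {\bf H3}, since each of $V_1$, $V_2$ is bounded below by $-K$ and the number of terms in (\ref{del}) or (\ref{vor}) is controlled linearly in the cardinality of $\g$ via the Euler formula for planar graphs, which combined with the factorial decay of $\pi^z_{\L_n}$ yields an integrable density; the lower bound uses {\bf H2} together with {\bf H5}, because the set of $(r,\L_n)$-modifications of $\g_R$ has positive $\pi^z_{\L_n}$-measure and uniformly bounded energy of order $\vol(\L_n)$. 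A stationary probability measure $\bar P_n$ is then produced by averaging the translates $\tau_s P_n$ over $s$ in a sub-window $\Delta_n\subset\L_n$ with $\vol(\Delta_n)/\vol(\L_n)\to 1$.

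Tightness of the sequence $(\bar P_n)_n$ for the local topology on $\Md$ reduces to a uniform bound on $E_{\bar P_n}[N_\L]$ for each bounded $\L$, which again follows from {\bf H3} and Poisson tail estimates for $\pi^z$. Prokhorov's theorem then provides a subsequence $(\bar P_{n_k})_k$ converging weakly to some stationary probability measure $P$ on $\Md$. Showing that $P$ is supported on $\Mdi$ uses {\bf H4} to prevent the intensity from collapsing to zero in the limit, and {\bf H2} to ensure that $\Mdi$ is a measurable, locally open set, so that the concentration of $\bar P_{n_k}$ on $\Mdi$ is preserved.

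The main obstacle is the verification that $P$ is Gibbsian in the sense of Definition \ref{defgibbs}, because the energies $E_\L$ take values in $\R\cup\{+\infty\}$ and are only lower semi-continuous, so the classical Preston continuity scheme does not apply directly. The strategy is to test the DLR equations against bounded continuous cylinder functions $F$ depending only on $\g_\Delta$ for bounded $\Delta\in\Bd$, comparing
\begin{equation*}
\int F\,d\bar P_{n_k}\qquad\text{and}\qquad \int\!\Big(\!\int F(\eta_\Delta)f_\Delta(\eta_\Delta,\g_{\Delta^c})\,\pi^z_\Delta(d\eta_\Delta)\Big)\bar P_{n_k}(d\g),
\end{equation*}
and passing to the limit along $n_k$. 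Weak convergence handles the outer integral, stability {\bf H3} provides uniform integrability for the inner one, and assumption {\bf H2} is used in a delicate way to control $\g\mapsto e^{-E_\Delta(\eta_\Delta,\g_{\Delta^c})}$ in a neighbourhood of the boundary of $\Mdi$, where lower semi-continuity alone does not suffice. Closing this step, exactly as in the more general frameworks of \cite{D} and \cite{DDG}, delivers the DLR equations for $P$ and completes the proof.
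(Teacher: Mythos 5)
The paper does not reprove this theorem: it refers to \cite{DDG} for the proof and only records that the argument ``relies on entropy tools which are only available in the setting of stationary processes.'' Your outline has the right skeleton (finite volume measures with the $\g_R$ boundary condition, stationarization by averaging, extraction of a limit, verification of the DLR equations), and your use of {\bf H5} to bound the partition function from below and of {\bf H3} to bound it from above is sound. But the two steps you treat most lightly are precisely where the real work lies, and as written they do not go through.

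First, the compactness step. Stability {\bf H3} alone does not give a uniform bound on $E_{\bar P_n}[N_\L]$: the natural estimate $E_{P_n}[N_{\L_n}]\le Z^{-1}\int N\,e^{cKN}\,d\pi^z_{\L_n}$ grows superexponentially in $\vol(\L_n)$ even after inserting the lower bound on $Z$, so Prokhorov plus first-moment bounds is not available. This is exactly why \cite{DDG} (following Georgii--Zessin) works with the specific entropy $\lim_n \vol(\L_n)^{-1}I_{\L_n}(\bar P_n\mid\pi^z)$, whose sublevel sets are sequentially compact for the topology of \emph{local} convergence; the entropy per unit volume is bounded using {\bf H3} and the $Z$ lower bound, and local convergence (not weak convergence of cylinder functions) is what lets one pass to the limit in expectations of tame local functionals. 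Second, you never address the fact that the Delaunay--Voronoi energy $E_\Delta(\eta_\Delta,\g_{\Delta^c})$ has \emph{unbounded range}: a cell meeting $\Delta$ may be determined by points arbitrarily far from $\Delta$, so $f_\Delta(\cdot,\g_{\Delta^c})$ is not a local function and the DLR limit cannot be taken by testing against cylinder functions alone. Controlling this (showing that, under the limit measure, cells inside $\Delta$ are almost surely determined by a bounded neighbourhood, which is where {\bf H4} enters to prevent large empty regions) is a central part of the argument in \cite{D} and \cite{DDG} and is missing from your sketch. Your invocation of {\bf H2} ``in a delicate way'' near the boundary of $\Mdi$ gestures at the right issue for the hardcore part but does not constitute a proof.
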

Assumptions {\bf H4} and {\bf H5} can be substituted by the following one.
\begin{itemize}
\item[{\bf H6}] There exists $A>0$ such that for every $r>0$ we can find $R>2r$ such that for every $\L\in\B(\Rd)$ and every $(r,\L)$-modification $\tilde \g$ of $\g_R$ the property (\ref{condition1}) or (\ref{condition1bis}) holds.
\end{itemize}

We have the second following existence theorem.
\begin{theoreme}\label{existence2}
There exists a stationary Gibbs Delaunay-Voronoi tessellation for any intensity $\nu=z\lambda$ ($z>0$) and any energy functions $(E_\L)_{\L\in\B(\Rd)}$ satisfying the Assumptions {\bf H1}, {\bf H2}, {\bf H3} and  {\bf H6}.
\end{theoreme}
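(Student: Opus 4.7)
The plan is to follow the classical scheme for existence of Gibbs measures: construct finite-volume Gibbs tessellations on an exhausting sequence of windows with well-chosen boundary conditions, stationarize, prove tightness, extract a subsequential limit, and verify the DLR equations. The essential novelty compared to Theorem \ref{existence1} is that assumption {\bf H6} replaces the uniform density bound of {\bf H4} and the fixed reference scale of {\bf H5}; the flexibility in choosing the scale $R$ of the equilateral reference configuration $\g_R$ as a function of the tolerance $r$ is what must compensate for the loss of {\bf H4}.

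First I would fix an increasing sequence $\Lambda_n \uparrow \Rd$ of squares and, using {\bf H6}, select a scale $R_n$ (large enough relative to $z$) together with a tolerance $r_n < R_n/2$ such that every $(r_n,\Lambda_n)$-modification of $\g_{R_n}$ satisfies the uniform energy bound $A$ in (\ref{condition1}) or (\ref{condition1bis}). Using $\g_{R_n}$ as the outside boundary condition, I define the finite-volume Gibbs measure $P_n$ absolutely continuous with respect to $\pi^z_{\Lambda_n}$ with density proportional to $e^{-E_{\Lambda_n}(\cdot,\g_{R_n,\Lambda_n^c})}$. Assumptions {\bf H1} and {\bf H2} guarantee that $\g_{R_n}$ can itself be suitably modified on $\Lambda_n$ to produce a nonempty open set of allowed configurations, so the normalizing constant $Z_n$ is strictly positive; assumption {\bf H3} bounds the energy from below and gives $Z_n < \infty$. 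I then stationarize by spatial averaging, setting $\bar P_n := |\Lambda_n|^{-1}\int_{\Lambda_n} P_n\circ \tau_x^{-1}\,dx$.

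Next I would prove tightness of $(\bar P_n)$ in the local convergence topology, using the standard specific-entropy criterion: the entropy $I(\bar P_n\mid \pi^z)$ per unit volume is uniformly bounded. The lower bound comes from the stability assumption {\bf H3}, which controls $E_{\Lambda_n}$ from below by a constant times the number of cells; the upper bound on $-\log Z_n$ uses {\bf H6}, by estimating $Z_n$ from below through integration over a ball of radius $r_n$ around each vertex of $\g_{R_n}\cap\Lambda_n$, where the energy is uniformly bounded by $A$ per cell. This is precisely the point where the adjustable $R$ in {\bf H6} replaces the density lower bound of {\bf H4}. Extracting a subsequential limit $P$, stationarity is automatic by construction.

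The main obstacle will be verifying that $P$ satisfies the DLR equations from Definition \ref{defgibbs}, because the (possibly non-hereditary) hardcore interaction makes the energy functional $E_\Lambda$ neither continuous nor even upper semicontinuous in general. The key tool here is assumption {\bf H2}: local openness of $\Mdi$ forces the indicator $\mathbbm{1}_{\Mdi}$ to be lower semicontinuous for local convergence, and the $r$-modifications afforded by {\bf H2} allow one to approximate test functions in the Georgii--Nguyen--Zessin balance formula by continuous ones. Combining this lower semicontinuity with Fatou's lemma on one side of the balance and the stability bound {\bf H3} with dominated convergence on the other, I would pass to the limit along the chosen subsequence and conclude that the limit measure satisfies the DLR equations with the prescribed conditional densities $f_\Lambda$. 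The delicate technical point is ensuring that the normalizing constants $Z_\Lambda(\g_{\Lambda^c})$ behave well in the limit; this again relies on the scale flexibility in {\bf H6} to build, for $P$-almost every $\g_{\Lambda^c}$, an explicit modification making $Z_\Lambda(\g_{\Lambda^c})$ strictly positive and finite.
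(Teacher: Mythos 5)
The paper does not actually prove this theorem itself: it defers entirely to \cite{DDG}, noting only that the argument ``relies on entropy tools which are only available in the setting of stationary processes.'' Your overall scheme --- finite-volume Gibbs measures, spatial averaging, a uniform specific-entropy bound obtained by lower-bounding $Z_n$ through integration over small balls around the vertices of a near-equilateral reference configuration (this is exactly where {\bf H6} enters), compactness of entropy level sets in the local convergence topology, and extraction of a stationary subsequential limit --- is the right one and matches the strategy of the cited proof. One small point of logic: {\bf H6} is quantified as ``for every $r>0$ there is an $R>2r$,'' so you should fix one pair $(r,R)$ once and for all; letting $R_n$ and $r_n$ vary with $n$ is unnecessary and, if they degenerate, destroys the uniformity of the entropy bound.

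The genuine gap is in the final step. You locate the difficulty in the discontinuity of the hardcore indicator and propose to handle it with {\bf H2}, lower semicontinuity of $\1_{\Mdi}$, and the Georgii--Nguyen--Zessin balance equation. Three problems. First, the GNZ equation is not available here: it presupposes a Papangelou conditional intensity, which fails to exist for non-hereditary interactions (the paper itself makes this point when introducing removable points and residuals); the DLR equations of Definition \ref{defgibbs} must be verified directly. Second, {\bf H2} (stability of $\Mdi$ under small displacements of finitely many points inside a bounded window) does not imply that $\Mdi$ is open, hence that $\1_{\Mdi}$ is lower semicontinuous, in the topology of local convergence, where points may also be created or annihilated arbitrarily close to the boundary of any window. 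Third, and most importantly, you never address the unbounded range of $E_\L$: a Delaunay triangle or Voronoi cell that is ``inside $\L$'' can extend arbitrarily far from $\L$ when the configuration is locally sparse, so $f_\L(\cdot,\g_{\L^c})$ is not a local function of the outside configuration and one cannot pass to the limit window by window. In Theorem \ref{existence1} this is tamed by the density lower bound {\bf H4}, which is precisely the hypothesis dropped here; for Theorem \ref{existence2} one must show that, under the limiting measure, the cells meeting $\L$ are confined to a bounded horizon with probability close to one, deriving this confinement from the entropy bound and {\bf H6}. That confinement argument is the real crux of the proof in \cite{DDG} and is absent from your sketch.
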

The proofs of Theorems \ref{existence1} and \ref{existence2} can be found in \cite{DDG}. They rely on entropy tools which are only available in the setting of stationary processes (i.e. $\nu=z\l$). Concerning our three example models, the following corollary holds (the existence of Model 2 is also proved in \cite{D}).  

\begin{corollaire} In the stationary case, i.e. when the intensity measure $\nu$ is equal to $z\l$, Gibbs Delaunay-Voronoi tessellations for models 1, 2 and 3 exist.
\end{corollaire}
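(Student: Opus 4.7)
The plan is to invoke Theorem \ref{existence1} for Models 2 and 3 (verifying hypotheses H1--H5) and Theorem \ref{existence2} for Model 1 (verifying H1--H3 and H6). The split is forced by Model 1 being scale invariant on its hardcore (only angles are constrained), so it admits allowed configurations of arbitrarily low density and H4 fails; on the other hand, its interaction vanishes on a full neighborhood of any equilateral lattice, making H6 essentially automatic.

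For H1, in each model I would exhibit an allowed configuration of the form $\g_R$ for a suitable side length. Any $R>0$ works for Model 1, since the equilateral triangles have minimal angle $\pi/3>\alpha$. For Model 2, any $R\in(\varepsilon,\alpha\sqrt{3})$ works, so that $l(T)=R>\varepsilon$ and $R(T)=R/\sqrt{3}<\alpha$; this interval is non-empty since $\varepsilon<\alpha$. For Model 3, the Voronoi cells of $\g_R$ are regular hexagons of apothem $R/2$ and area $R^2\sqrt{3}/2$, so $h_{\min}(C)=h_{\max}(C)=R/2$ and $h_{\max}^2/\vol(C)=1/(2\sqrt{3})$; any $R\in(2\varepsilon,2\alpha)$ then makes the three strict hardcore inequalities hold, the last one thanks to the standing assumption $B>1/(2\sqrt{3})$. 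For H2, in each model the hardcore conditions are finite conjunctions of strict inequalities on continuous functions of finitely many generators in a neighborhood of the modification window $\L$, so a small enough $r_\L(\g)$ preserves them. For H3, $V_1\ge 0$ in Models 1 and 3; in Model 2 the finiteness of $V_1(T)$ forces $R(T)<\alpha$, hence $\per(T)\le 6\alpha$ and $V_1\ge -6\alpha|\theta|$; and in Model 3 the hardcore pinches the allowed cell volumes between two positive constants depending on $\varepsilon,\alpha,B$, so the ratio appearing inside $V_2$ is uniformly bounded and $V_2\ge -|\theta|M$ for some $M<\infty$.

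For Models 2 and 3, H4 follows from the fact that the hardcore constrains each cell to have diameter at most $2\alpha$ (from the circumradius bound in Model 2 and from $h_{\max}<\alpha$ in Model 3), which gives a uniform lower bound on the generator density. For H5 in these two models, I would fix $R$ and $r$ as in H1 with $r$ small enough that every $(r,\L)$-modification of $\g_R$ keeps all geometric quantities of the perturbed cells within a compact set strictly inside the hardcore region; continuity then yields a uniform upper bound $A$ for $V_1$ and $V_2$. For Model 1, H6 is verified directly: given $r>0$, pick $R>2r$ large enough that angular perturbations under $(r,\L)$-modifications of $\g_R$ stay below $\pi/3-\alpha$, so every minimal angle still exceeds $\alpha$, both $V_1$ and $V_2$ vanish on every such modification, and $A=0$ works.

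The main obstacle is the quantitative part of H5 for Model 3: one needs explicit control of $h_{\min}$, $h_{\max}$, $\vol$ and of the neighbor volume ratio under small perturbations of the hexagonal lattice, which is elementary but notationally tedious planar geometry. The other verifications reduce either to continuity arguments on finite lists of strict inequalities or to direct inspection of a regular hexagon.
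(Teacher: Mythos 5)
Your proof follows exactly the same route as the paper's: Theorem~\ref{existence2} (via {\bf H6} with $A=0$) for Model 1 and Theorem~\ref{existence1} (via {\bf H4}--{\bf H5}) for Models 2 and 3, with {\bf H1}--{\bf H3} checked on equilateral configurations $\g_R$; you simply supply the details that the paper dismisses as obvious, and your computations ($R\in(\varepsilon,\alpha\sqrt3)$, $R\in(2\varepsilon,2\alpha)$, the hexagon value $1/(2\sqrt3)$) are correct. The one step I would rephrase is your justification of {\bf H4} for Model 3: since $h_{\max}(C)$ is the maximal distance from the generator to the \emph{edges} of $C$ (this is what yields $h_{\max}^2/\vol(C)=1/(2\sqrt3)$ for the hexagon), the bound $h_{\max}(C)<\alpha$ does \emph{not} by itself force $\operatorname{diam}(C)\le 2\alpha$ for an individual cell; what it does give, once imposed on every cell, is that every Delaunay edge has length less than $2\alpha$ (the distance from $x$ to the edge shared with $C(y)$ is at least $|x-y|/2$), hence every Delaunay triangle has diameter less than $2\alpha$, so every point of the plane is within $2\alpha$ of $\g$ and {\bf H4} follows from the same covering argument you intend.
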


\begin{proof}
First of all, assumptions {\bf H1}, {\bf H2}, {\bf H3} are obviously satisfied for the three models. 
Concerning Model 1, the existence is given by Theorem \ref{existence2}. Assumption {\bf H6} is proved by taking $A=0$ and $R$ large enough with respect to $r$ such that any $(r,\L)$-modification $\tilde \g$ of $\g_R$ have Delaunay triangles with angles larger than $\alpha$.  
Concerning Models 2 and 3, the existence is given by Theorem \ref{existence1}. Assumption {\bf H4} comes from the hardcore interaction which forces the cells to be not too large. The uniform bound in {\bf H5} is obvious if $R$ and $r$ are chosen such that any $(r,\L)$-modification $\tilde \g$ of $\g_R$ is in $\Mdi$.
\end{proof}

\subsection{Convergence of the algorithm}\label{convergence-algo}

The Birth-Death-Move algorithm used in this paper is presented in \cite{MW} page 115 where the convergence is proved in Proposition 7.7 if the associated Markov Chain is irreducible and aperiodic (see also \cite{MT}). In our setting, there is no problem with the aperiodicity since the probability that nothing happens during one step of the algorithm is positive. In general to prove the irreducibility, it is sufficient to point out that every configuration $\g$ is linked by a finite number of algorithm steps to the empty configuration. In our case, it is not possible because there is a strong hardcore interaction and so the connection with the empty configuration is in general false. So we need the connectivity assumption {\bf H7} based on the following definition.
\begin{definition}\label{connected}
$\g$ and $\g'$ in $\Mdi$ are connected if there exist $n\ge 0$ and a sequence of configurations $\g_0, \g_1, \ldots ,\g_{n-1},\g_n$ in $\Mdi$ such that $\g_0=\g$,  $\g_n=\g'$ and, for each $0\le i \le n-1$, $\g_i$ and $\g_{i+1}$ differ only by one step of the algorithm (a birth, a death or a move).
\end{definition}
\begin{itemize}
\item[{\bf H7}] For every $\g$ and $\g'$ in $[0,1]^2$ such that $\bar \g$ and $\bar \g'$, defined in (\ref{periodise}), are in $\Mdi$, then $\g$ and $\g'$ are connected.
\end{itemize}
The deterministic connectivity assumption {\bf H7} and the flexibility assumption {\bf H2} on the space $\Mdi$ ensure that for all configurations $\g$, $\g'$ in $\Mdi$ and every $r>0$, the algorithm may generate from $\g$, with a positive probability and a finite number of steps, a $(r,[0,1]^2)$-modification of $\g'$ (see the definition after {\bf H1}). Irreducibility of the Markov chain follows and we have the following proposition.

\begin{proposition}\label{convergence}
Under the assumptions {\bf H2} and {\bf H7}, the Birth-Death-Move algorithm presented in Section 3.2 converges to $\bar P$ (see Definition \ref{Pbar}) in total variation norm for $\bar P$-a.s. every initial condition.
\end{proposition}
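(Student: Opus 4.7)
The plan is to verify the three classical hypotheses --- $\bar P$-invariance, aperiodicity, and $\bar P$-irreducibility --- of the Markov chain defined by steps 1--6, and then to invoke Proposition 7.7 of \cite{MW} to conclude convergence in total variation to $\bar P$ from $\bar P$-a.s. initial configurations.

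Invariance and aperiodicity are routine. For invariance I would write down detailed balance for each of the three proposal kernels (uniform birth on $[0,1]^2$, uniform selection among existing points for death, and Gaussian displacement for moves); the acceptance ratios in steps 3--5 are, by construction, the Metropolis-Hastings ratios that make $\bar P$ reversible under each sub-kernel, and hence invariant under their $1/3$-$1/3$-$1/3$ mixture. Aperiodicity follows from a positive holding probability: from any $\gamma_0\in\Mdi$, a proposed birth at a point $x$ for which the acceptance ratio is strictly less than $1$ has positive probability of being rejected, so the chain stays put with positive probability. Neither argument uses \textbf{H2} or \textbf{H7}.

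The main obstacle is $\bar P$-irreducibility, and this is where \textbf{H2} and \textbf{H7} combine. Since the sets of $(r,[0,1]^2)$-modifications generate the Borel $\sigma$-algebra on $\Mdi$, it suffices to prove that from any initial $\gamma\in\Mdi$, any target $\gamma'\in\Mdi$, and any $r>0$, the chain reaches some $(r,[0,1]^2)$-modification of $\gamma'$ in finitely many steps with positive probability. By \textbf{H7}, there is a finite sequence $\gamma=\gamma_0,\gamma_1,\dots,\gamma_n=\gamma'$ in $\Mdi$ with each consecutive pair differing by a single birth, death, or move. Applying \textbf{H2} to each $\gamma_i$ (and taking the minimum), I can choose $r_0\in(0,r]$ so small that every $(r_0,[0,1]^2)$-modification of any $\gamma_i$ still lies in $\Mdi$. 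Now along this deterministic path, each birth proposal lands within distance $r_0$ of the prescribed point with probability proportional to the Lebesgue volume of an $r_0$-ball, each death proposal selects the prescribed existing point with probability at least $1/N$ where $N$ bounds the number of points in the configurations along the path, and each Gaussian move lands within $r_0$ of its target with positive probability. Because $\bar f>0$ throughout $\Mdi$, each Metropolis-Hastings acceptance ratio along this path is bounded below by a strictly positive constant; multiplying these positive probabilities yields the required lower bound on the $n$-step transition probability.

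With $\bar P$-invariance, aperiodicity, and $\bar P$-irreducibility in hand, Proposition 7.7 of \cite{MW} (or, equivalently, the convergence theorem for aperiodic positive Harris chains in \cite{MT}) delivers convergence in total variation to $\bar P$ from $\bar P$-almost every initial condition, which is the claim.
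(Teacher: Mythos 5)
Your proposal is correct and follows essentially the same route as the paper: aperiodicity from the positive holding probability, $\bar P$-irreducibility obtained by taking the deterministic finite path guaranteed by \textbf{H7} and using \textbf{H2} to thicken it into a positive-probability tube of $(r_0,[0,1]^2)$-modifications, and then Proposition 7.7 of \cite{MW}. In fact your write-up is more detailed than the paper's own two-paragraph sketch; the only cosmetic imprecision is that reversibility of $\bar P$ holds for the birth and death proposals \emph{jointly} (they are adjoint to one another) rather than for each sub-kernel separately, but this does not affect the argument.
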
  
It seems difficult in our context to obtain rates of convergence, because  the energy functions are not locally stable (the local stability means that $|E_\L(\g\cup\{x\})- E_\L(\g)|$ is uniformly bounded with respect to $\L$, $\g$ and $x$). Moreover the space $\Mdi$ may be very complicated since there is no upper bound  in general  for the number of steps $n$ in assumption {\bf H7}. 
  
Let us remark that assumption {\bf H7} is not easy to check. If {\bf H7} is not satisfied then the algorithm converges to the restriction of $\bar P$ on the connected component of the initial configuration $\g_0$ for the  connection relation defined below (see definition \ref{connected}). In this case, the limiting distribution may depend on the initial configuration. For Model 1, we can show that {\bf H7} is satisfied if $\alpha$ is small enough. We don't give the proof here but the scheme is essentially the same than in the following Proposition \ref{connection} which deals with Model 2. For Model 3, it is more complicated, we have not proved it but it seems satisfied if $\alpha$ and $B$ are large enough.

\begin{proposition} \label{connection}
In  Model 2, if $\; 2\varepsilon < \alpha < \frac{1}{2}$ then the algorithm presented in Sections 3.2 converges to $\bar P$.
\end{proposition}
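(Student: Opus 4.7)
The plan is to apply Proposition \ref{convergence}, which reduces the statement to verifying \textbf{H2} and \textbf{H7} in this setting.

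For \textbf{H2}, I would proceed by continuity. The state space $\Md$ excludes four-cocircular configurations, so the Delaunay tessellation and the attached quantities $l(T)$ and $R(T)$ vary continuously with the vertex positions. Since $\gamma\in\Mdi$ satisfies the strict inequalities $l(T)>\varepsilon$ and $R(T)<\alpha$ for every Delaunay triangle meeting $\Lambda$, the local finiteness of the Delaunay complex near $\Lambda$ produces some $r_\Lambda(\gamma)>0$ such that every $(r_\Lambda(\gamma),\Lambda)$-modification of $\gamma$ still lies in $\Mdi$.

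The bulk of the work is \textbf{H7}. I would fix a common reference configuration $\gamma^*$ on the torus $[0,1]^2$ and show that every periodically allowed $\gamma$ is connected to $\gamma^*$. A natural candidate is a regular triangular lattice of side $d$, where $d$ is picked in the non-empty interval $(\varepsilon,\alpha\sqrt{3})$ so that every triangle of $\del(\bar\gamma^*)$ has $l(T)=d>\varepsilon$ and $R(T)=d/\sqrt{3}<\alpha$. The assumption $2\varepsilon<\alpha$ guarantees this interval is non-empty (since $\varepsilon<\alpha/2<\alpha\sqrt{3}$), and $\alpha<1/2$ ensures that several full rows and columns of such triangles fit inside $[0,1]^2$ after a minor adjustment of $d$ to match the periodic boundary.

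To connect $\gamma$ to $\gamma^*$ I would proceed in three phases. Birth steps first add points inside any Delaunay triangle of $\bar\gamma$ whose circumradius exceeds $d/\sqrt{3}$: since $R(T)<\alpha$ and the hardcore slack $\alpha-2\varepsilon>0$ is positive, there is a non-empty region near the incenter where a new vertex can be inserted at distance greater than $\varepsilon$ from every other point without producing any circumradius above $\alpha$. Move steps then slide the resulting points toward lattice positions; by the openness supplied by \textbf{H2}, each such move can be decomposed into finitely many admissible substeps that preserve the Delaunay combinatorics. Death steps finally remove any surplus points, each of which can be made removable by a prior local move since the neighbouring triangles after deletion still have circumradius below $\alpha$. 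The main obstacle is the move phase, because a Delaunay flip can instantaneously create a triangle whose circumradius exceeds $\alpha$; the cure is to interleave local births, moves, and deaths so that the homotopy from $\gamma$ to $\gamma^*$ stays away from the hardcore boundary, a construction whose feasibility hinges precisely on the slack $\alpha-2\varepsilon>0$.
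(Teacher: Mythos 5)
Your reduction to \textbf{H2} and \textbf{H7} via Proposition \ref{convergence} matches the paper, and \textbf{H2} by continuity is fine (the paper dismisses it as obvious). The gap is in \textbf{H7}: your move phase is not a proof. You assert that sliding points toward lattice positions ``can be decomposed into finitely many admissible substeps that preserve the Delaunay combinatorics,'' which is false in general (generic moves force Delaunay flips), and you then concede exactly this --- a flip can instantaneously create a triangle with circumradius above $\alpha$ --- and propose to ``interleave local births, moves, and deaths'' without specifying any construction. That interleaving \emph{is} the theorem; deferring it to an unspecified homotopy that ``stays away from the hardcore boundary'' leaves the central difficulty unresolved. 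The quantitative mechanism that makes the hypothesis $2\varepsilon<\alpha<\tfrac12$ do work never appears in your argument.

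The paper's device is \emph{saturation}, and it is worth seeing why it closes precisely the hole you left open. First add points (birth steps) until no open ball of radius $\varepsilon$ in the torus is empty; each such birth keeps the configuration allowed, since a point inserted at distance $>\varepsilon$ from all others cannot create a short Delaunay edge and insertion never increases the maximal circumradius. In a saturated configuration every empty circumscribed ball has radius at most $\varepsilon$. Consequently, to place a target point $x$ of $\gamma'$ one moves the (nearby, at distance $\le\varepsilon$) offending point $y$ onto $x$: a displacement of amplitude at most $\varepsilon$ applied to a configuration whose circumradii are all $\le\varepsilon$ keeps every circumradius below $2\varepsilon<\alpha$, so the intermediate configurations stay in $\Mdi$ \emph{through} any Delaunay flips. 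One re-saturates between insertions, and finally deletes the surplus points, which is safe because the intermediate configurations all contain $\gamma'$ (so circumradii stay below $\alpha$) and all pairwise distances in the saturated superset exceed $\varepsilon$. Also note the paper connects $\gamma$ to $\gamma'$ directly rather than through a reference lattice $\gamma^*$; your reference-configuration strategy is legitimate in principle (the algorithm steps are reversible), but without the saturation bound --- or some equally concrete substitute --- neither routing yields a proof.
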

\begin{proof}
According to Proposition \ref{convergence}, it suffices to show {\bf H2} and {\bf H7}. Since {\bf H2} is obviously satisfied for Model 2, it remains to show {\bf H7}.

Let $\g$ and $\g'$ be in $[0,1]^2$ such that $\bar \g$ and $\bar \g'$ are in $\Mdi$. To simplify the notations, we say that $\g$ is in $\Mdi$ if $\bar \g$ is in $\Mdi$. We start the sequence by putting $\g_0=\g$ and we construct the sequence $\g_i$ by an algorithmic procedure. 

In a first step (called saturation) we add points until there does not exist any ball with radius $\vp$ without points. More precisely, we test if there exists $x$ in $[0,1]^2$ such that $\bar \g_0 \cap B(x,\vp)=\emptyset$. If it is not the case, the saturation is finished. If it is the case we add the point $x$ by a birth-step action and we put $\g_1=\g_0+x$. Then, we test again if there exists $x$ in $[0,1]^2$ such that $\bar \g_1 \cap B(x,\vp)=\emptyset$. If it is not the case the saturation is finished otherwise we put $\g_2=\g_1+x$. We go on like this until the saturation procedure stops which is always the case since $[0,1]^2$ may contain only a finite number of points with a distance between them bigger than $\vp$. Let us remark that this construction produces configurations in $\Mdi$. We denote by $\g_{m_1}$ the saturated configuration of $\g$.

In a second step, we add the points of $\g'$ to $\g_{m_1}$ by the following way. Let $x$ be a point of $\g'$. By definition of the saturation, the configuration $\g_{m_1}+x$ is not in $\Mdi$ since there exists at least one point $y$ in $\g_{m_1}$ such that $|x-\bar y|\le \vp$ ($\bar y$ is the periodic version of $y$ such that $\bar y\in B(x,\vp)$). If this point $y$ is unique we use a move-step action to move $y$ to $x$. So we put $\g_{m_1+1}=(\g-y)+x$. If these points are non unique, they are removed (except one) by death-step actions and the last one is moved as above. We denote by $\g_{m_1+1},\ldots,\g_{m_2}$ this sequence and we remark that these configurations are in $\Mdi$ since $\g_{m_1}$ is saturated and $ 2\varepsilon < \alpha < \frac{1}{2}$. Now we saturate again the configuration $\g_{m_2}$ as above and we add another point of $\g'$ to $\g_{m_2}$. We go on until we have added all the points of $\g'$ and we denote by $\g_{m_3}$ the final configuration. 

It remains to remove the points of $\g_{m_3}$ which are not in $\g'$. It is sufficient to apply several death-step actions since the obtained configurations are in $\Mdi$. 

\end{proof}

\subsection{Consistency of the estimation procedure}\label{theorieestimation}

Let us suppose that the energy function, defined in (\ref{del}) and (\ref{vor}), is parameterized by $\beta$ and $\theta$, and is denoted by   $E_\L^{\beta, \theta}$. We first need to distinguish properly the hardcore parameter $\beta$ from the other parameter $\theta$. This is the purpose of the following assumption.\\ 

{\bf S1}: For all $\g\in\Md$, for all $\beta$ and for all $\theta$ and $\theta'$,
$$\forall \L\in\Bd,\qquad E_\L^{\beta, \theta}(\g_\L,\g_{\L^c})<\infty \iff E_\L^{\beta, \theta'}(\g_\L,\g_{\L^c})<\infty.$$

Under   {\bf S1}, the support of the energy is parameterized by  $\beta$ only, and not by $\theta$, which confirms that $\beta$ is the hardcore parameter. This assumption is satisfied by Models 2 and 3 with $\beta=(\epsilon, \alpha)$ and $\beta=(\epsilon, \alpha, B)$ respectively.

The strong consistency (under any stationary Gibbs measure) of $\hat \beta$ defined in (\ref{esthardcore}) and $(\hat z,\ \hat \theta)$ defined in (\ref{estsmooth}) are established in Theorem 2 in \cite{DL}, under some regularity assumptions. These assumptions have been checked for Model 2 in Proposition 5 in \cite{DL}.  Concerning  Model 3, the assumptions could be checked in the same way excepted for assumption {\bf S3} involved in  \cite{DL}. We have not succeeded to prove it but it seems true at least for $B$ large enough.

\section*{Acknowledgements}  
Simulations have been done on the computer servers supported by the research program MOSTAPAD-CPER 2007-2013, Project 7 "STIC et Calcul", of the F\'ed\'eration de Math\'ematiques des Pays de Loire.

\end{document}